\DeclareFontFamily{U}{matha}{\hyphenchar\font45}
\DeclareFontShape{U}{matha}{m}{n}{
      <5> <6> <7> <8> <9> <10> gen * matha
      <10.95> matha10 <12> <14.4> <17.28> <20.74> <24.88> matha12
      }{}
\DeclareSymbolFont{matha}{U}{matha}{m}{n}
\DeclareFontFamily{U}{mathx}{\hyphenchar\font45}
\DeclareFontShape{U}{mathx}{m}{n}{
      <5> <6> <7> <8> <9> <10>
      <10.95> <12> <14.4> <17.28> <20.74> <24.88>
      mathx10
      }{}
\DeclareSymbolFont{mathx}{U}{mathx}{m}{n}
\DeclareMathSymbol{\obot}         {2}{matha}{"6B}
\DeclareMathSymbol{\bigobot}       {1}{mathx}{"CB}
\newcommand{\secref}[1]{Section~\ref{#1}}
\newcommand{\figref}[1]{Figure~\ref{#1}}
\newcommand{\defref}[1]{Definition~\ref{#1}}
\newcommand{\crlref}[1]{Corollary~\ref{#1}}
\renewcommand{\eqref}[1]{Equation~(\ref{#1})}
\newcommand{\thmref}[1]{Theorem~\ref{#1}}
\newcommand{\prpref}[1]{Proposition~\ref{#1}}
\newcommand{\lmaref}[1]{Lemma~\ref{#1}}
\newcommand{\rmkref}[1]{Remark~\ref{#1}}
\newcommand\numberthis{\addtocounter{equation}{1}\tag{\theequation}}
\newcommand{\NN}{\mathbb N}
\newcommand{\ZZ}{\mathbb Z}
\newcommand{\RR}{\mathbb R}
\newcommand{\CC}{\mathbb C}
\newcommand{\FF}{\mathbb F}
\newcommand{\HH}{\mathbb H}
\newcommand{\spc}{\mathrm}
\newcommand{\group}{\mathfrak}
\newcommand{\complex}{\mathcal}
\newcommand{\real}{\mathrm{Re}}
\newcommand{\imaginary}{\mathrm{Im}}
\newcommand{\image}{\mathrm{im}\,}
\newcommand{\kernel}{\mathrm{ker}\,}
\newcommand{\vol}{\mathrm{vol}\,}
\newcommand{\grad}{\mathrm{grad}\,}
\newcommand{\GL}{\mathrm{GL}}
\newcommand{\id}{\mathrm{id}}
\newcommand{\ijk}{ijk}
\newcommand{\kk}{\mathfrak{K}}
\newcommand{\monodromy}{\mathfrak M}
\newcommand{\unitcircle}{\mathbb S}
\newcommand{\union}{\mathop{\cup}}
\newcommand{\intersection}{\mathop{\cap}}
\newcommand{\disjointunion}{\mathop{\sqcup}}
\newcommand{\dprime}{{\prime\!\prime}}	
\numberwithin{equation}{section}  
\newtheoremstyle{style1}% name of the style to be used
  {5pt}% measure of space to leave above the theorem. E.g.: 3pt
  {5pt}% measure of space to leave below the theorem. E.g.: 3pt
  {\it}% name of font to use in the body of the theorem
  {}% measure of space to indent
  {\bf\scshape}% name of head font
  {.}% punctuation between head and body
  {5pt}% space after theorem head; " " = normal interword space
  {}% Manually specify head
\newtheoremstyle{style2}% name of the style to be used
  {3pt}% measure of space to leave above the theorem. E.g.: 3pt
  {3pt}% measure of space to leave below the theorem. E.g.: 3pt
  {\it}% name of font to use in the body of the theorem
  {}% measure of space to indent
  {\bf}% name of head font
  {:}% punctuation between head and body
  {3pt}% space after theorem head; " " = normal interword space
  {}% Manually specify head
\theoremstyle{style1}
\newtheorem{definition}{Definition}
\newtheorem*{definition*}{Definition}
\newtheorem{theorem}{Theorem}
\newtheorem{proposition}{Proposition}
\newtheorem{corollary}{Corollary}
\newtheorem{lemma}{Lemma}
\theoremstyle{style2}
\newtheorem{remark}{Remark}
\newtheorem*{example}{Example}
\begin{document}

\title[Complex Line Bundles over Simplicial Complexes and their Applications]{Complex Line Bundles over Simplicial Complexes and their Applications}

\author{Felix Kn\"oppel}
\author{Ulrich Pinkall}

\address{Felix Kn\"oppel\\
  Technische Universit\"at Berlin\\Institut f\"ur Mathematik\\
  Stra{\ss}e des 17.\ Juni 136\\
  10623 Berlin\\ Germany}

\address{Ulrich Pinkall\\
  Technische Universit\"at Berlin\\Institut f\"ur Mathematik\\
  Stra{\ss}e des 17.\ Juni 136\\
  10623 Berlin\\ Germany}

\email{knoeppel@math.tu-berlin.de, pinkall@math.tu-berlin.de}

\date{\today}

\begin{abstract}
Discrete vector bundles are important in Physics and recently found remarkable applications in Computer Graphics. This article approaches discrete bundles from the viewpoint of Discrete Differential Geometry, including a complete classification of discrete vector bundles over finite simplicial complexes. In particular, we obtain a discrete analogue of a theorem of Andr\'e Weil on the classification of hermitian line bundles. Moreover, we associate to each discrete hermitian line bundle with curvature a unique piecewise-smooth hermitian line bundle of piecewise constant curvature. This is then used to define a discrete Dirichlet energy which generalizes the well-known cotangent Laplace operator to discrete hermitian line bundles over Euclidean simplicial manifolds of arbitrary dimension.
\end{abstract}

\thanks{Both authors supported by DFG SFB/TRR 109 ``Discretization in Geometry and Dynamics''.}

\maketitle

\section{Introduction}
Vector bundles are fundamental objects in Differential Geometry and play an important role in Physics \cite{B85}. The Physics literature is also the main place where discrete versions of vector bundles were studied: First, there is a whole field called Lattice Gauge Theory where numerical experiments concerning connections in bundles over discrete spaces (lattices or simplicial complexes) are the main focus. Some of the work that has been done in this context is quite close to the kind of problems we are going to investigate here \cite{CH11,CH12,HS12}.

Vector bundles make their most fundamental appearance in Physics in the form of the complex line bundle whose sections are the wave functions of a charged particle in a magnetic field. Here the bundle comes with a connection whose curvature is given by the magnetic field \cite{B85}. There are situations where the problem itself suggests a natural discretization: The charged particle (electron) may be bound to a certain arrangement of atoms. Modelling this situation in such a way that the electron can only occupy a discrete set of locations then leads to the ``tight binding approximation'' \cite{KS96,AO03,SK08}.

Recently vector bundles over discrete spaces also have found striking applications in Geometry Processing and Computer Graphics. We will describe these in detail in \secref{sec:applications}.

In order to motivate the basic definitions concerning vector bundles over simplicial complexes let us consider a smooth manifold \(\tilde{\spc M}\) that comes with smooth triangulation (\figref{fig:smooth-triangulation}).

\begin{figure}[h]
\centering
\includegraphics[width=.5\columnwidth]{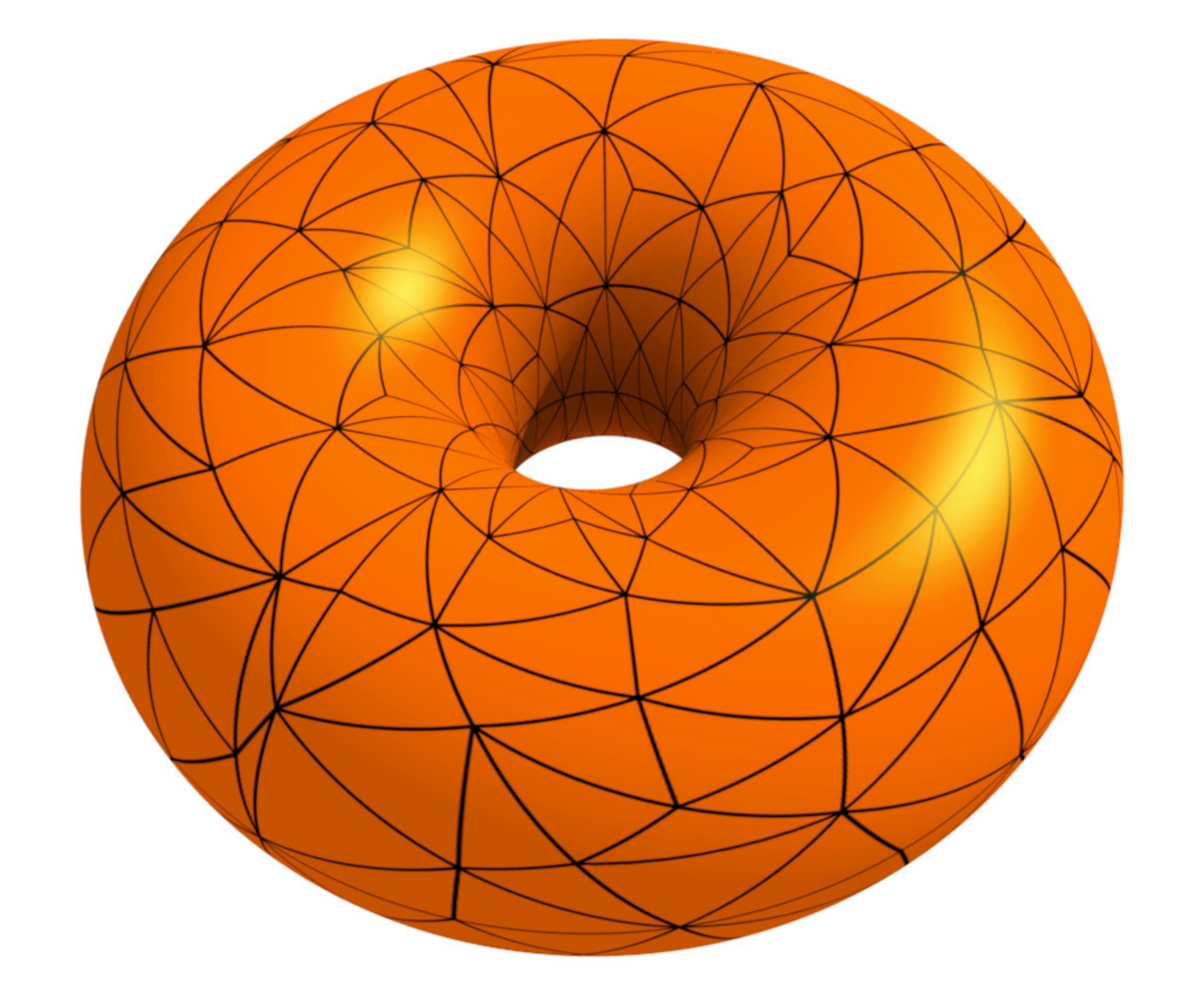}
\caption{A smooth triangulation of a manifold.}
\label{fig:smooth-triangulation}
\end{figure}

Let \(\tilde{\spc E}\) be a smooth vector bundle over \(\tilde{\spc M}\) of rank \(\kk\). Then we can define a discrete version \(\spc E\) of \(\tilde{\spc E}\) by restricting \(\tilde{\spc E}\) to the vertex set \(\mathcal V\) of the triangulation. Thus \(\spc E\) assigns to each vertex \(i\in \mathcal V\) the \(\kk\)-dimensional real vector space \(\spc E_i:=\tilde{\spc E}_i\). This is the way vector bundles over simplicial complexes are defined in general: Such a bundle \(\spc E\) assigns to each vertex \(i\) a \(\kk\)-dimensional real vector space \(\spc E_i\) in such a way that \(\spc E_i\intersection \spc E_j = \emptyset\) for \(i\neq j\).

So far the notion of a discrete vector bundle is completely uninteresting mathematically: The obvious definition of an isomorphism between two such bundles \(\spc E\) and \(\hat{\spc E}\) just would require a vector space isomorphism \(f_i\colon \spc E_i\to \hat{\spc E}_i\) for each vertex \(i\). Thus, unless we put more structure on our bundles, any two vector bundles of the same rank over a simplicial complex are isomorphic.

Suppose now that \(\tilde{\spc E}\) comes with a connection \(\nabla\). Then we can use the parallel transport along edges \(\ij\) of the triangulation to define vector space isomorphisms
\[
\eta_{\ij}\colon\tilde{\spc E}_i\to \tilde{\spc E}_j
\]

This leads to the standard definition of a connection on a vector bundle over a simplicial complex: Such a connection is given by a collection of isomorphisms \(\eta_{\ij}\colon \spc E_i\to \spc E_j\) defined for each edge \(\ij\) such that 
\[
\eta_{ji}=\eta_{\ij}^{-1}.
\]
Now the classification problem becomes non-trivial because for an isomorphism \(f\) between two bundles \(\spc E\) and \(\hat{\spc E}\) with connection we have to require compatibility with the transport maps \(\eta_{\ij}\):
\[
f_j\circ \eta_{\ij}=\hat{\eta}_{\ij}\circ f_i.
\]

Given a connection \(\eta\) and a closed edge path \(\gamma=e_\ell\cdots e_1\) (compare \secref{sec:classifying-bundles})
of the simplicial complex we can define the monodromy \(P_\gamma \in \mathrm{Aut}(\spc E_i)\) around \(\gamma\) as
\[
P_\gamma = \eta_{e_\ell}\circ \ldots \circ \eta_{e_1}.
\]

In particular the monodromies around triangular faces of the simplicial complex provide an analog for the smooth curvature in the discrete setting. In \secref{sec:classifying-bundles} we will classify vector bundles with connection in terms of their monodromies.

Let us look at the special case of a rank \(2\) bundle \(\spc E\) that is oriented and comes with a Euclidean scalar product. Then the \(90^{\circ}\)-rotation in each fiber makes it into \(1\)-dimensional complex vector space, so we effectively are dealing with a hermitian complex line bundle. If \(\ijk\) is an oriented face of our simplicial complex, the monodromy \(P_{\partial\,\ijk}\colon \spc E_i\to \spc E_i\) around the triangle \(\ijk\) is multiplication by a complex number \(h_{\ijk}\) of norm one. Writing \(h_{\ijk}=e^{\imath\alpha_{\ijk}}\) with \(-\pi<\alpha_{\ijk}\leq \pi\) we see that this monodromy can also be interpreted as a real curvature \(\alpha_{\ijk}\in (-\pi,\pi]\). It thus becomes apparent that the information provided by the connection \(\eta\) cannot encode any curvature that integrated over a single face is larger than \(\pm\pi\). This can be a serious restriction for applications: We effectively see a cutoff for the curvature that can be contained in a single face.

Remember however our starting point: We asked for structure that can be naturally transferred from the smooth setting to the discrete one. If we think again about a triangulated smooth manifold it is clear that we can associate to each two-dimensional face \(\ijk\) the integral \(\Omega_{\ijk}\) of the curvature \(2\)-form over this face. This is just a discrete \(2\)-form in the sense of discrete exterior calculus \cite{DK08}. Including this discrete curvature \(2\)-form with the parallel transport \(\eta\) brings discrete complex line bundles much closer to their smooth counterparts:

{\bf Definition:} {\it A hermitian line bundle with curvature over a simplicial complex \(\complex X\) is a triple \((\spc E,\eta,\Omega)\). Here \(\spc E\) is complex hermitian line bundle over \(\complex X\), for each edge \(\ij\) the maps \(\eta_{\ij}\colon \spc E_i\to \spc E_j\) are unitary and the closed real-valued \(2\)-form \(\Omega\) on each face \(\ijk\) satisfies
\[
\eta_{ki}\circ \eta_{jk}\circ \eta_{\ij}= e^{\imath\Omega_{\ijk}}\,\id_{\spc E_i}.
\]}

In \secref{sec:classifying-line-bundles} we will prove for hermitian line bundles with curvature the discrete analog of a well-known theorem by Andr\'e Weil on the classification of hermitian line bundles.

In \secref{sec:index} we will define for hermitian line bundles with curvature a degree (which can be an arbitrary integer) and we will prove a discrete version of the Poincar\'e-Hopf index theorem concerning the number of zeros of a section (counted with sign and multiplicity).

Finally we will construct in \secref{sec:associated-piecewise-smooth-bundle} for each hermitian line bundle with curvature a piecewise-smooth bundle with a curvature \(2\)-form that is constant on each face. Sections of the discrete bundle can be canonically extended to sections of the piecewise-smooth bundle. This construction will provide us with finite elements for bundle sections and thus will allow us to compute the Dirichlet energy on the space of sections.

\section{Applications of Vector Bundles in Geometry Processing}
\label{sec:applications}
Several important tasks in Geometry Processing (see the examples below) lead to the problem of coming up with an optimal normalized section \(\phi\) of some Euclidean vector bundle \(\spc E\) over a compact manifold with boundary \(\spc M\). Here ``normalized section'' means that \(\phi\) is defined away from a certain singular set and where defined it satisfies $|\phi|=1$.

In all the mentioned situations \(\spc E\) comes with a natural metric connection \(\nabla\) and it turns out that the following method for finding \(\phi\) yields surprisingly good results: 

{\it Among all sections \(\psi\) of \(\spc E\) find one which minimizes \(\int_{\spc M} |\nabla \psi|^2\) under the constraint \(\int_{\spc M} |\psi|^2=1\). Then away from the zero set of \(\psi\) use \(\phi=\psi/|\psi|\).}\smallskip

The term "optimal" suggests that there is a variational functional which is minimized by \(\phi\) and this is in fact the case. Moreover, in each of the applications there are heuristic arguments indicating that \(\phi\) is indeed a good choice for the problem at hand. For the details we refer to the original papers. Here we are only concerned with the Discrete Differential Geometry involved in the discretization of the above variational problem.

\subsection{Direction Fields on Surfaces}
Here \(\spc M\) is a surface with a Riemannian metric, \(\spc E=\spc{TM}\) is the tangent bundle and \(\nabla\) is the Levi-Civita connection. \figref{fig:bunny-field} shows the resulting unit vector field \(\phi\).
\begin{figure}[h]
\centering
\includegraphics[width=.75\columnwidth]{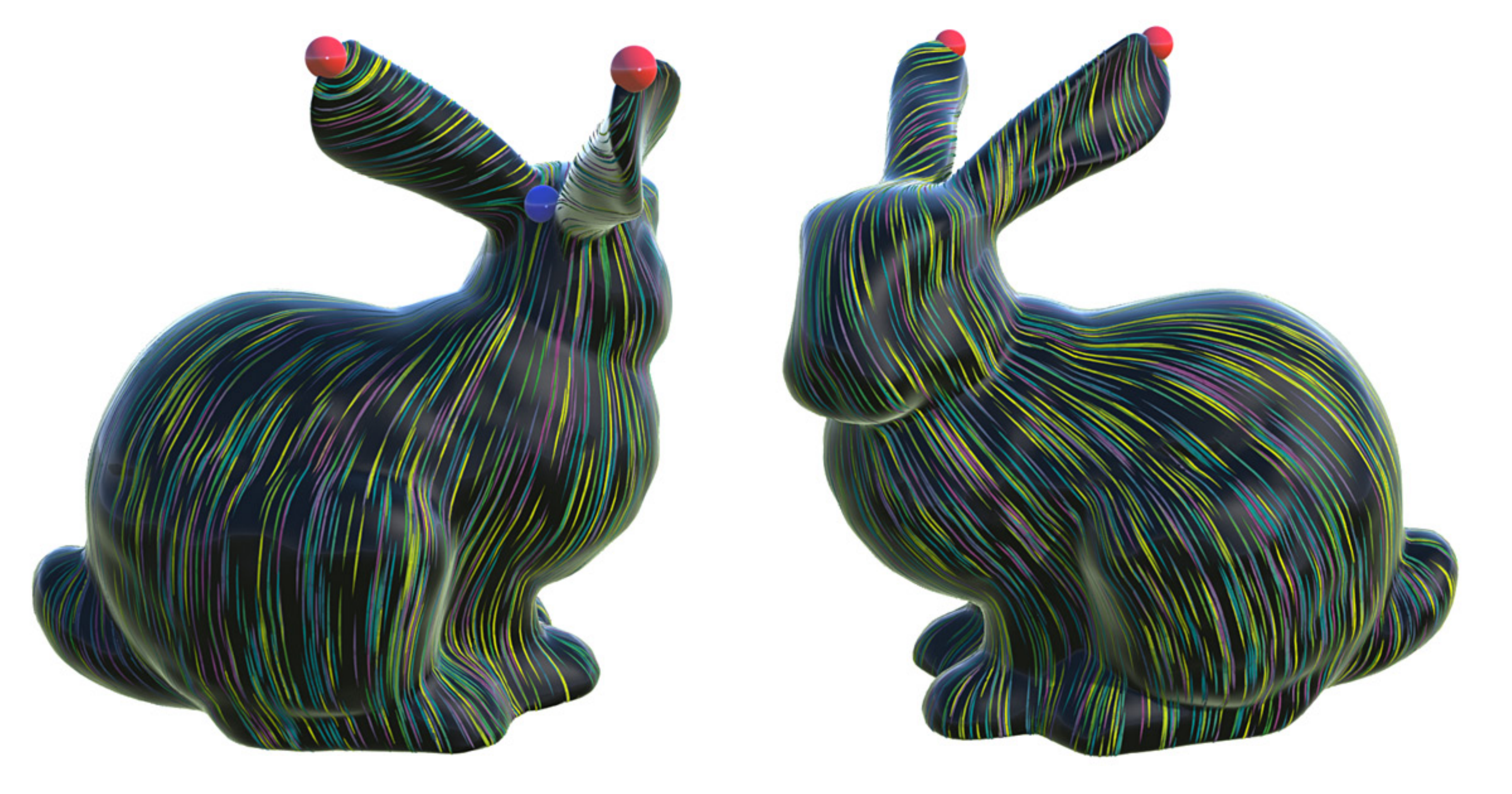}
\caption{An optimal direction field on a surface.}
\label{fig:bunny-field}
\end{figure}
If we consider \(\spc{TM}\) as a complex line bundle, normalized sections of the tensor square \(\spc L=\spc{TM} \otimes \spc{TM}\) describe unoriented direction fields on \(\spc M\). Similarly, ``higher order direction fields'' like cross fields are related to higher tensor powers of \(\spc{TM}\). Higher order direction fields also have important applications in Computer Graphics.

\subsection{Stripe Patterns on Surfaces}
\label{sec:stripe-patterns}
A {\em stripe pattern} on a surface \(\spc M\) is a map which away from a certain singular set assigns to each point \(p\in \spc M\) an element \(\phi(p)\in \unitcircle=\{z\in\CC||z|=1\}\). Such a map \(\phi\) can be used to color \(\spc M\) in a periodic fashion according to a color map that assigns a color to each point on the unit circle \(\unitcircle\). Suppose we are given a \(1\)-form \(\omega\) on \(\spc M\) that specifies a desired direction and spacing of the stripes, which means that ideally we would wish for something like \(\phi=e^{i\alpha}\) with \(d\alpha=\omega\). Then the algorithm in \cite{KC15} says that we should use a \(\phi\) that comes from taking \(\spc E\) as the trivial bundle \(\spc E=\spc M\times \CC\) and \(\nabla \psi=d\psi-i\omega \psi\). Sometimes the original data come from an unoriented direction field and (in order to obtain the \(1\)-form \(\omega\)) we first have to move from \(\spc M\) to a double branched cover \(\tilde{\spc M}\) of \(\spc M\). This is for example the case in \figref{fig:bunny-stripes}.
\begin{figure}[h]
\centering
\includegraphics[width=.75\columnwidth]{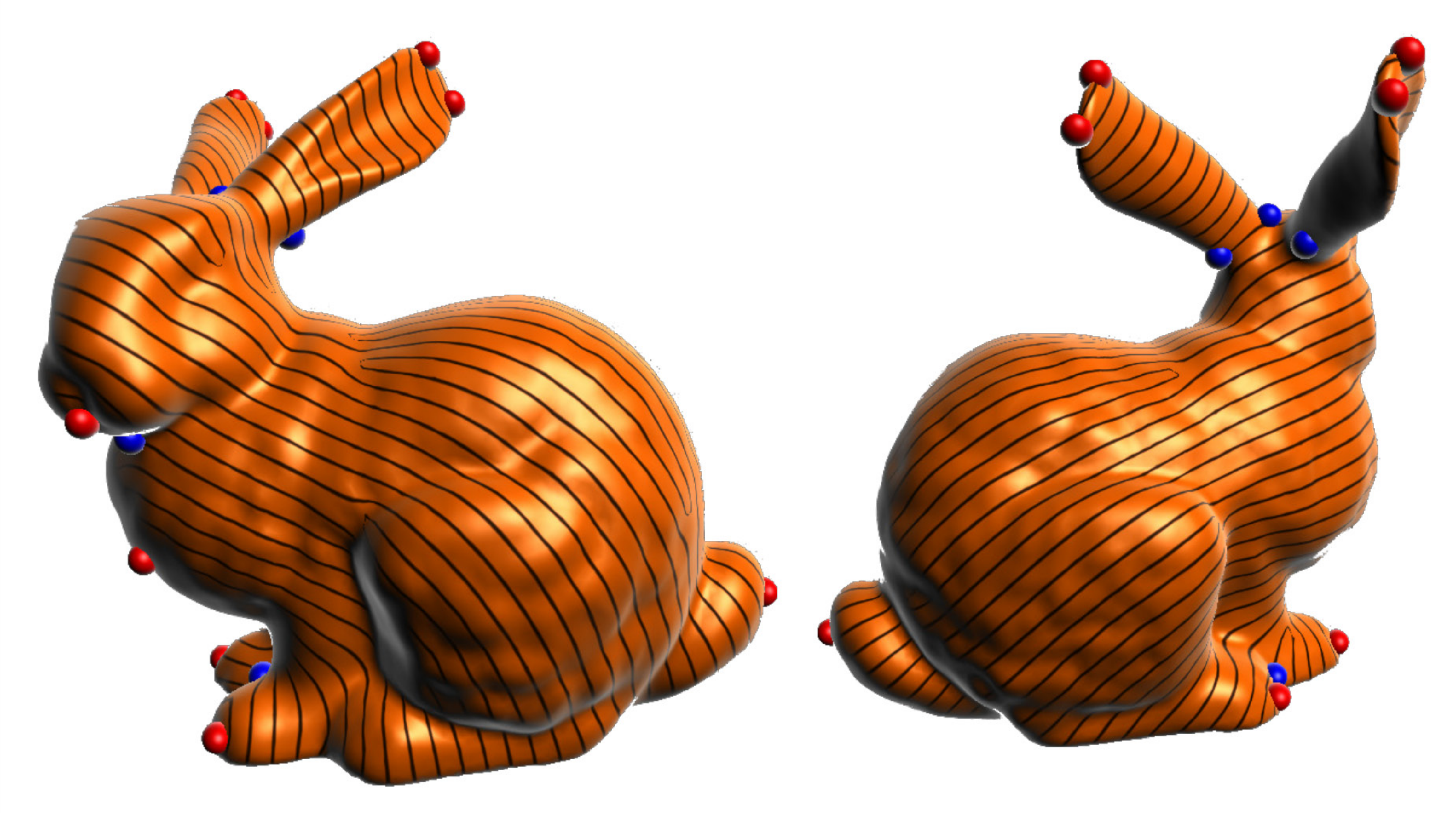}
\caption{An optimal stripe pattern aligned to an unoriented direction field.}
\label{fig:bunny-stripes}
\end{figure}

\subsection{Decomposing Velocity Fields into Fields Generated by Vortex Filaments}
The velocity fields that arise in fluid simulations quite often can be understood as a superposition of interacting vortex rings. It is therefore desirable to have an algorithm that reconstructs the underlying vortex filaments from a given velocity field. Let the velocity field \(\mathbf{v}\) on a domain \(\spc M\subset \RR^3\) be given as a \(1\)-form \(\omega=\langle \mathbf{v},\cdot\rangle\). Then the algorithm proposed in \cite{WP14} uses the function \(\phi\colon \spc M\to \CC\) that results from taking the trivial bundle \(\spc E=\spc M\times \CC\) endowed with the connection \(\nabla \psi=d\psi-i\omega \psi\). Note that so far this is just a three-dimensional version of the situation in \secref{sec:stripe-patterns}. This time however we even forget \(\phi\) in the end and only retain the zero set of \(\psi\) as the filament configuration we are looking for.

\begin{figure}[h]
\centering
\includegraphics[width=.55\columnwidth]{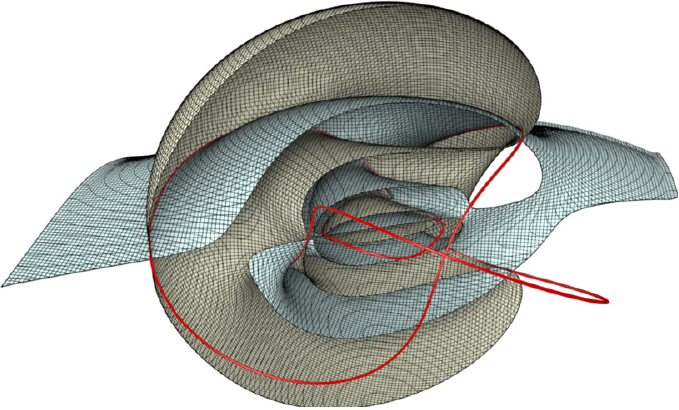}
\caption{A knotted vortex filament defined as the zero set of a complex valued function \(\psi\). It is shown as the intersection of the zero set of \(\real\,\psi\) with the zero set of \(\imaginary\,\psi\).}
\label{fig:trefoil-psi}
\end{figure}

\subsection{Close-To-Conformal Deformations of Volumes}
Here the data are a domain \(\spc M\subset \RR^3\) and a function \(u\colon \spc M\to \RR\). The task is to find a map \(f\colon \spc M\to \RR^3\) which is approximately conformal with conformal factor \(e^u\), i.e. for all tangent vectors \(X\in \spc{TM}\) we want
\[
|df(X)|\approx e^u|X|.
\]
The only exact solutions of this equations are the M{\"o}bius transformations. For these we find
\[
df(X)=e^u \overline{\psi}X\psi
\]
for some map \(\psi\colon \spc M\to \HH\) with \(|\psi|=1\) which in addition satisfies
\[
d\psi(X) = -\tfrac{1}{2}(\mathrm{grad}\,u \times X)\,\psi.
\]
Note that here we have identified \(\RR^3\) with the space of purely imaginary quaternions. Let us define a connection \(\nabla\) on the trivial rank \(4\) vector bundle \(\spc M\times \HH\) by
\[
\nabla_X \psi :=d\psi(X)+\tfrac{1}{2}(\mathrm{grad}\,u \times X)\psi.
\]
Then we can apply the usual method and find a section \(\phi \colon \spc M\to \HH\) with \(|\phi|=1\). In general there will not be any \(f\colon \spc M\to \RR^3\) that satisfies
\begin{equation}
\label{eq:df}
df(X)=e^u \overline{\phi}X\phi
\end{equation}
exactly but we can always look for an \(f\) that satisfies (\ref{eq:df}) in the least squares sense. See \figref{fig:teaser} for an example.

\begin{figure}[h]
\centering
\includegraphics[width=\columnwidth]{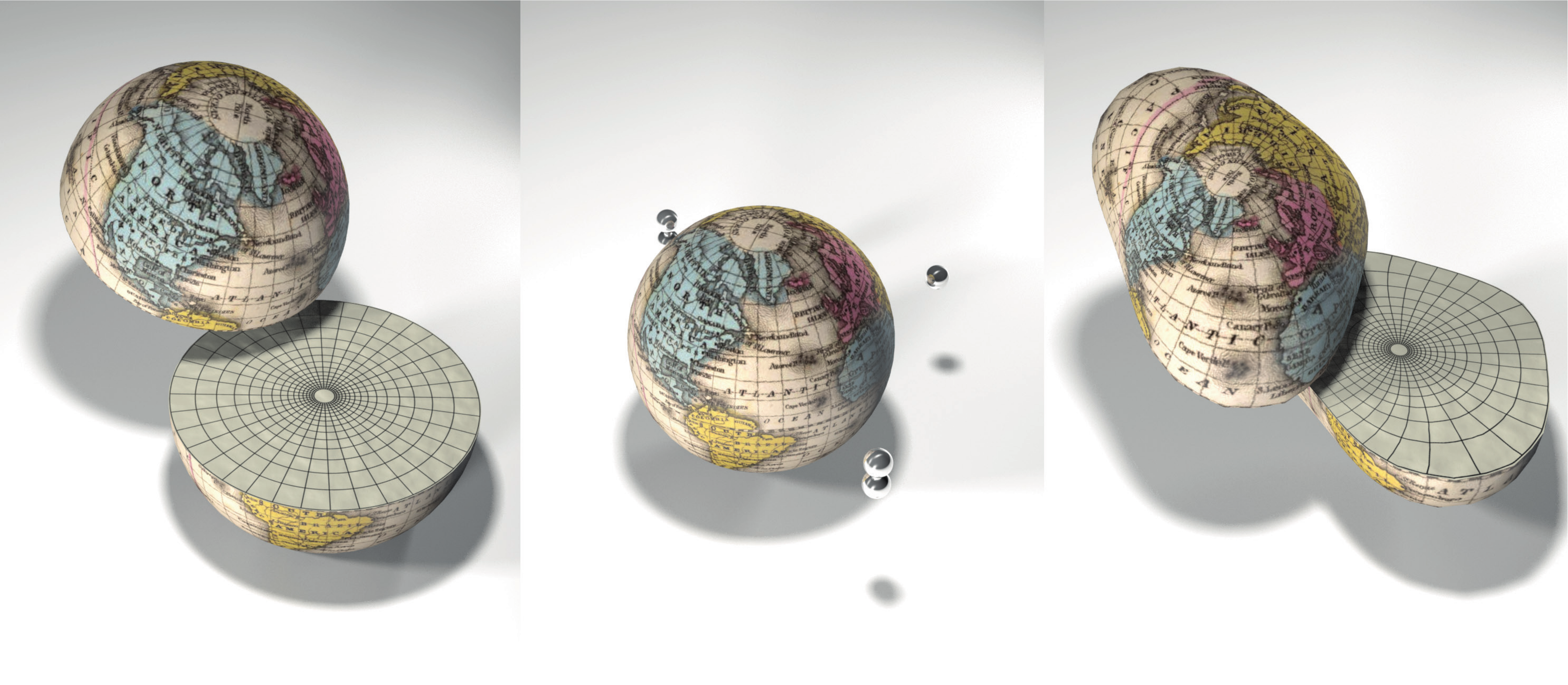}
\caption{Close-to-conformal deformation of a sphere based on a desired conformal factor specified as the potential of a collection of point charges.}
\label{fig:teaser}
\end{figure}

\section{Discrete Vector Bundles with Connection}

An {\it (abstract) simplicial complex} is a collection \(\complex X\) of finite non-empty sets such that if \(\sigma\) is an element of \(\complex X\) so is every non-empty subset of \(\sigma\) (\cite{munkres}). 

An element of a simplicial complex \(\complex X\) is called a {\it simplex} and each non-empty subset of a simplex \(\sigma\) is called a {\it face of \(\sigma\)}. The elements of a simplex are called {\it vertices} and the union of all vertices \(\mathcal V = \union_{\sigma\in \mathcal X} \sigma\) is called the {\it vertex set of \(\complex X\)}. The {\it dimension of a simplex} is defined to be one less than the number of its vertices: \(\dim \sigma := |\sigma|-1\). A simplex of dimension \(k\) is also called a {\it \(k\)-simplex}. The {\it dimension of a simplicial complex} is defined as the maximal dimension of its simplices.

To avoid technical difficulties, we will restrict our attention to {\it finite} simplicial complexes only. The main concepts are already present in the finite case. Though, the definitions carry over verbatim to infinite simplicial complexes.

\begin{definition}
Let \(\FF\) be a field and let \(\complex X\) be a simplicial complex with vertex set \(\mathcal V\). A {\it discrete \(\FF\)-vector bundle} of rank \(\kk\in\NN\) over \(\complex X\) is a map \(\pi \colon \spc E \to \mathcal V\) such that for each vertex \(i\in \mathcal V\) the {\it fiber over \(i\)} 
\[
\spc E_i := \pi^{-1}(\{i\})
\] 
has the structure of a \(\kk\)-dimensional \(\FF\)-vector space.
\end{definition}

Most of the time, we slightly abuse notation and denote a discrete vector bundle over a simplicial complex schematically by \(\spc E \to \complex X\).

The usual vector space constructions carry fiberwise over to discrete vector bundles. So we can speak of tensor products or dual bundles. Moreover, the fibers can be equipped with additional structures. In particular, a real vector bundle whose fibers are Euclidean vector spaces is called a {\it discrete Euclidean vector bundle}. Similarly, a complex vector bundle whose fibers are hermitian vector spaces is called a {\it discrete hermitian vector bundle}.  

So far discrete vector bundles are completely uninteresting from the mathematical viewpoint -- the obvious definition of an isomorphism \(f\) between two discrete vector bundles \(\spc E\) and \(\tilde{\spc E}\) just requires isomorphisms between the fibers \(f_i\colon \spc E_i \to \tilde{\spc E}_i\). Thus any two bundles of rank \(\kk\) are isomorphic. This changes if we connect the fibers along the edges by isomorphisms.

Let \(\sigma=\{i_0,\ldots,i_k\}\) be a \(k\)-simplex. We define two orderings of its vertices to be equivalent if they differ by an even permutation. Such an equivalence class is then called an {\it orientation} of \(\sigma\) and a simplex together with an orientation is called an {\it oriented simplex}. We will denote the oriented \(k\)-simplex just by the word \(i_0\cdots i_k\). Further, an oriented \(1\)-simplex is called an {\it edge}.

\begin{definition}
Let \(\spc E \to \complex X\) be a discrete vector bundle over a simplicial complex. A {\it discrete connection on \(\spc E\)} is a map \(\eta\) which assigns to each edge \(\ij\) an isomorphism \(\eta_{\ij}\colon \spc E_i \to \spc E_j\) of vector spaces such that
\[
\eta_{ji}= \eta_{\ij}^{-1}.
\]
\end{definition}

\begin{remark}
Here and in the following a morphism of vector spaces is a linear map that also preserves all additional structures - if any present. E.g., if we are dealing with hermitian vector spaces, then a morphism is a complex-linear map that preserves the hermitian metric, i.e. it is a complex linear isometric immersion.
\end{remark}

\begin{definition}
A {\it morphism of discrete vector bundles with connection} is a map \(f\colon \spc E\to \spc F\) between discrete vector bundles \(\spc E\to \complex X\) and \(\spc F\to \complex X\) with connections \(\eta\) and \(\theta\) (resp.) such that 
\begin{compactenum}[i)]
	\item for each vertex \(i\) we have that \(f(\spc E_i)\subset \spc F_i\) and the map \(f_i=\left.f\right|_{\spc E_i}\colon \spc E_i \to \spc F_i\) is a morphism of vector spaces,
	\item for each edge \(\ij\) the following diagram commutes: 
	\begin{center}
		\includegraphics[width=2.7cm]{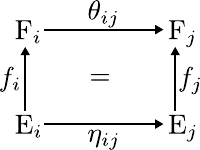},
	\end{center}i.e. \(\theta_{\ij}\circ f_i= f_j\circ\eta_{\ij}\). 
\end{compactenum}
\end{definition}
An {\it isomorphism} is a morphism which has an inverse map, which is also a morphism. Two discrete vector bundles with connection are called {\it isomorphic}, if there exists an isomorphism between them.

Again let \(\mathcal V\) denote the vertex set of \(\complex X\). A discrete vector bundle \(\spc E \to \complex X\) with connection \(\eta\) is called {\it trivial}, if it is isomorphic to the {\it product bundle} 
\[
\underline{\FF^\kk}:=\mathcal V\times\FF^\kk
\]
over \(\complex X\) equipped with the connection which assigns to each edge the identity \(\id_{\FF^\kk}\).

It is a natural question to ask how many non-isomorphic discrete vector bundles with connection exist on a given simplicial complex \(\complex X\). 

\begin{remark}
For \(k\in \NN\), the \(k\)-skeleton \(\complex X^k\) of a simplicial complex \(\complex X\) is the subcomplex that consists of all simplices \(\sigma \in \complex X\) of dimension \(\leq k\),
\[
\complex X^k := \bigl\{\sigma \in \complex X \mid \dim \sigma \leq k\bigr\}.
\]
The classification of vector bundles over \(\complex X\) only involves its \(1\)-skeleton \(\complex X^1\) and could be equally done just for discrete vector bundles over \(1\)-dimensional simplicial complexes, i.e. graphs. Later on, when we consider discrete hermitian line bundles with curvature in \secref{sec:classifying-line-bundles}, the \(2\)- and \(3\)-skeleton come into play and finally, in \secref{sec:finite-elements}, we will use the whole simplicial complex.
\end{remark}

\section{Monodromy - A Discrete Analogue of Kobayashi's Theorem}
\label{sec:classifying-bundles}

Let \(\complex X\) be a simplicial complex. Each edge \(\ij\) of \(\complex X\) has a start vertex \(s(\ij):=i\) and a target vertex \(t(\ij):=j\). A {\it edge path \(\gamma\)} is a sequence of successive edges \((e_1, \ldots, e_\ell)\), i.e. \(s(e_{k+1})=t(e_k)\) for all \(k=1,\ldots,\ell-1\), and will be denoted by the word:
\[
\gamma= e_\ell\cdots e_1.
\]
If \(i=s(e_1)\), we say that \(\gamma\) starts at \(i\), and if \(j=t(e_\ell)\) that \(\gamma\) ends at \(j\). The complex \(\complex X\) is called {\it connected}, if any two of its vertices can be joined by an edge path. From now on we will only consider connected simplicial complexes.    

Now, let \(\spc E \to \complex X\) be a discrete vector bundle with connection \(\eta\). To each edge path \(\gamma=e_\ell\cdots e_1\) from \(i\) to \(j\), we define the {\it parallel transport \(P_\gamma \colon \spc E_i \to \spc E_j\) along \(\gamma\)} by
\[
P_\gamma := \eta_{e_\ell}\circ\cdots\circ\eta_{e_1}.
\]
To each edge path \(\gamma=e_\ell\cdots e_1\) we can assign an {\it inverse path} \(\gamma^{-1}\). If \(\tilde \gamma = e_m\cdots e_{\ell+1}\) starts where \(\gamma\) ends, we can build the {\it concatenation \(\tilde\gamma\gamma\)}: With the notation \(\ij^{-1} := ji\), we have 
\[
\gamma^{-1}:=e_1^{-1}\cdots e_\ell^{-1},\quad \tilde\gamma\gamma := e_m\cdots e_{\ell+1}e_\ell\cdots e_1.
\]
Whenever \(\tilde\gamma\gamma\) is defined,
\begin{equation}\label{eq:parallel-transport-is-groupoid-morphism}
P_{\tilde\gamma\gamma} = P_{\tilde\gamma}\circ P_\gamma, \quad P_{\gamma^{-1}}= P_{\gamma}^{-1}.
\end{equation}
The elements of the fundamental group are identified with equivalence classes of {\it edge loops}, i.e. edge paths starting and ending a given {\it base vertex} \(i\) of \(\complex X\), where two such loops are identified if they differ by a sequence of {\it elementary moves} \cite{SeifertThrelfall}:  
\[
	e_\ell\cdots e_{k+1} e^{-1} e\, e_k\cdots e_1 \longleftrightarrow e_\ell\cdots e_{k+1} e_k\cdots e_1. 
\]
Now, by \eqref{eq:parallel-transport-is-groupoid-morphism}, we see that the parallel transport descends to a representation of the fundamental group \(\pi_1(\complex X^1,i)\). We encapsulate this in the following

\begin{proposition}
Let \(\spc E\to \complex X\) be a discrete vector bundle with connection over a connected simplicial complex. The parallel transport descends to a representation of the fundamental group \(\pi_1(\complex X^1, i)\): 
\[
\monodromy \colon \pi_1(\complex X^1, i) \to \mathrm{Aut}(\spc E_i),\quad [\gamma]\mapsto P_\gamma.
\] 
The representation \(\monodromy\) will be called the {\it monodromy} of the discrete vector bundle \(\spc E\).
\end{proposition}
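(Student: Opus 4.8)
The plan is to verify the three things implicit in the statement: that the assignment \([\gamma]\mapsto P_\gamma\) does not depend on the chosen representative of the class, that the resulting map \(\monodromy\) is a group homomorphism, and that its image lies in \(\mathrm{Aut}(\spc E_i)\). The last point is immediate: for an edge loop \(\gamma\) based at \(i\) the parallel transport \(P_\gamma=\eta_{e_\ell}\circ\cdots\circ\eta_{e_1}\) is a composition of the isomorphisms \(\eta_{e}\), hence an isomorphism \(\spc E_i\to\spc E_i\), i.e. an element of \(\mathrm{Aut}(\spc E_i)\). So the real content is well-definedness, after which the homomorphism property follows formally from \eqref{eq:parallel-transport-is-groupoid-morphism}.

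For well-definedness I would show that the parallel transport is unchanged by a single elementary move, since by construction \(\pi_1(\complex X^1,i)\) is the set of edge loops at \(i\) modulo the equivalence generated by the moves
\[
e_\ell\cdots e_{k+1}\,e^{-1} e\, e_k\cdots e_1 \longleftrightarrow e_\ell\cdots e_{k+1}\,e_k\cdots e_1.
\]
Writing the left-hand word as the concatenation of the paths \(e_\ell\cdots e_{k+1}\), \(e^{-1}e\), and \(e_k\cdots e_1\), repeated application of the multiplicativity \(P_{\tilde\gamma\gamma}=P_{\tilde\gamma}\circ P_\gamma\) from \eqref{eq:parallel-transport-is-groupoid-morphism} factors its transport accordingly, and the relation \(P_{e^{-1}}=P_e^{-1}\) gives \(P_{e^{-1}e}=P_{e^{-1}}\circ P_e=P_e^{-1}\circ P_e=\id\) for the inserted block. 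Substituting this back shows that the two words above induce the same map \(\spc E_i\to\spc E_i\). Since any two equivalent loops are joined by a finite chain of such moves and each move leaves \(P\) unchanged, equivalent loops yield identical transport, so \(\monodromy([\gamma]):=P_\gamma\) is well-defined on equivalence classes.

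It then remains to see that \(\monodromy\) respects the group structure. The product in \(\pi_1(\complex X^1,i)\) is induced by concatenation of loops, and \eqref{eq:parallel-transport-is-groupoid-morphism} gives directly \(P_{\tilde\gamma\gamma}=P_{\tilde\gamma}\circ P_\gamma\) and \(P_{\gamma^{-1}}=P_\gamma^{-1}\); hence \(\monodromy([\tilde\gamma][\gamma])=\monodromy([\tilde\gamma])\circ\monodromy([\gamma])\), inverses are sent to inverses, and the constant loop maps to \(\id_{\spc E_i}\). Thus \(\monodromy\) is a representation. I do not expect a genuine obstacle here: the only point requiring care is confirming that the elementary moves generate the whole equivalence relation defining \(\pi_1(\complex X^1,i)\) — precisely the standard edge-path description of the fundamental group of the \(1\)-skeleton — so that checking invariance under one move is enough.
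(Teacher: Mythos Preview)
Your argument is correct and is exactly the approach the paper takes: the proposition is stated immediately after the paper observes that, by \eqref{eq:parallel-transport-is-groupoid-morphism}, the parallel transport is invariant under the elementary moves defining \(\pi_1(\complex X^1,i)\) and respects concatenation. You have simply written out in full the details the paper leaves implicit.
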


If we change the base vertex this leads to an isomorphic representation -- an isomorphism is just given by the parallel transport \(P_\gamma\) along an edge path joining the two base vertices. Moreover, if \(f\colon \spc E\to \tilde{\spc E}\) is an isomorphism of discrete vector bundles with connection over the simplicial complex \(\complex X\), for any edge path \(\gamma= e_\ell\cdots e_1\) from a vertex \(i\) to a vertex \(j\) the following equality holds:
\[
\tilde P_\gamma = f_j \circ P_\gamma \circ f_i^{-1}.
\]
Here \(P\) and \(\tilde{P}\) denote the parallel transports of \(\spc E\) and \(\tilde{\spc E}\). Thus we obtain:
\begin{proposition}\label{prp:isomorphicmonodromy}
Isomorphic discrete vector bundles with connection have isomorphic monodromies.
\end{proposition}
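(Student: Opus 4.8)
The plan is to exhibit an explicit intertwiner between the two monodromy representations; the natural candidate is the fiber isomorphism at the base vertex. Write $\monodromy$ and $\tilde\monodromy$ for the monodromies of $\spc E$ and $\tilde{\spc E}$, and let $f\colon\spc E\to\tilde{\spc E}$ be an isomorphism of discrete vector bundles with connection, with fiber maps $f_k\colon\spc E_k\to\tilde{\spc E}_k$.

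First I would establish, for every edge path $\gamma=e_\ell\cdots e_1$ running from a vertex $i$ to a vertex $j$, the transport identity
\[
\tilde P_\gamma = f_j\circ P_\gamma\circ f_i^{-1},
\]
which is the display preceding the statement, by induction on the length $\ell$. The base case $\ell=1$ is exactly the commuting square $\tilde\eta_{\ij}\circ f_i=f_j\circ\eta_{\ij}$ from the definition of a morphism of bundles with connection, rewritten as $\tilde\eta_{\ij}=f_j\circ\eta_{\ij}\circ f_i^{-1}$. For the inductive step one splits $\gamma$ at the intermediate vertex $k=t(e_{\ell-1})=s(e_\ell)$, writes $\gamma'=e_{\ell-1}\cdots e_1$, and uses multiplicativity $\tilde P_\gamma=\tilde\eta_{e_\ell}\circ\tilde P_{\gamma'}$ together with the base case for $e_\ell$ and the induction hypothesis for $\gamma'$; the two inner copies $f_k^{-1}\circ f_k$ cancel, leaving $f_j\circ P_\gamma\circ f_i^{-1}$.

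Next I would specialize to edge loops based at the chosen base vertex, i.e. take $j=i$. For such $\gamma$ the identity becomes
\[
\tilde P_\gamma = f_i\circ P_\gamma\circ f_i^{-1},
\]
so $f_i$ conjugates $P_\gamma$ into $\tilde P_\gamma$. By the preceding proposition, which established that the parallel transport descends to a representation of $\pi_1(\complex X^1,i)$, both $P$ and $\tilde P$ are well defined on homotopy classes; since the conjugation relation holds for every representative of a class $[\gamma]$, it passes to the class, yielding
\[
\tilde\monodromy([\gamma]) = f_i\circ\monodromy([\gamma])\circ f_i^{-1}
\]
for all $[\gamma]\in\pi_1(\complex X^1,i)$.

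Finally, since $f$ is an isomorphism of bundles, $f_i\colon\spc E_i\to\tilde{\spc E}_i$ is a vector space isomorphism, so the last identity exhibits $f_i$ as an intertwiner realizing $\tilde\monodromy=f_i\,\monodromy\,f_i^{-1}$; hence the two representations are isomorphic, as claimed. There is no genuine obstacle here: the only point requiring a moment's care is the compatibility of the pointwise conjugation relation with the passage to classes in $\pi_1$, but this is immediate because the relation is proved uniformly for all representing edge loops and the descent of $P$ and $\tilde P$ is already known.
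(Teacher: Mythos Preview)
Your proof is correct and follows essentially the same approach as the paper: the paper establishes the identity \(\tilde P_\gamma = f_j\circ P_\gamma\circ f_i^{-1}\) in the discussion immediately preceding the proposition and then simply states ``Thus we obtain'' the result. You have merely supplied the routine details (the induction on path length and the passage to \(\pi_1\)) that the paper leaves implicit.
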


In fact, the monodromy completely determines a discrete vector bundle with connection up to isomorphism, which provides a complete classification of discrete vector bundles with connection: Let \(\complex X\) be a connected simplicial complex. Let \(\spc E\to \complex X\) be a discrete \(\FF\)-vector bundle of rank \(\kk\) with connection and let \(\monodromy\colon \pi_1(\complex X^1, i)\to \mathrm{Aut}(\spc E_i)\) denote its monodromy. Any choice of a basis of the fiber \(\spc E_i\) determines a group homomorphism \(\rho\in \mathrm{Hom}\bigl(\pi_1(\complex X^1,i),\GL(\kk, \FF)\bigr)\). Any other choice of basis determines a group homomorphism \(\tilde{\rho}\) which is related to \(\rho\) by conjugation, i.e. there is \(S\in \GL(\kk, \FF)\) such that
\[
\tilde\rho([\gamma]) = S\cdot \rho([\gamma]) \cdot S^{-1} \textit{ for all } [\gamma]\in \pi_1(\complex X^1,i).
\] 
Hence the monodromy \(\monodromy\) determines a well-defined conjugacy class of group homomorphisms from \(\pi_1(\complex X^1,i)\) to \(\GL(\kk, \FF)\), which we will denote by \([\monodromy]\). The group \(\GL(\kk, \FF)\) will be referred to as the {\it structure group of \(\spc E\)}.

Let \(\mathfrak V_\FF^\kk(\complex X)\) denote the {\it set of isomorphism classes of \(\FF\)-vector bundles of rank \(\kk\) with connection over \(\complex X\)} and let \(\mathrm{Hom}\bigl(\pi_1(\complex X^1,i),\GL(\kk,\FF)\bigr)/_\sim\) denote the {\it set of conjugacy classes of group homomorphisms from the fundamental group \(\pi_1(\complex X^1,i)\) into the structure group \(\GL(\kk, \FF)\)}.

\begin{theorem}\label{thm:discretekobayashi}
\(F\colon \mathfrak V_\FF^\kk(\complex X) \to \mathrm{Hom}\bigl(\pi_1(\complex X^1,i),\GL(\kk,\FF)\bigr)/_\sim\), \([\spc E]\mapsto [\monodromy]\) is bijective.
\end{theorem}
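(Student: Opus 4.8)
The plan is to prove bijectivity by handling well-definedness, surjectivity and injectivity in turn, the last two both organized around a single choice of spanning tree $T\subset\complex X^1$ together with the standard fact that $\pi_1(\complex X^1,i)$ is a free group, freely generated by the loops associated to the edges outside $T$. Well-definedness of $F$ is already supplied by \prpref{prp:isomorphicmonodromy}: isomorphic discrete vector bundles with connection have conjugate monodromies, so $[\spc E]\mapsto[\monodromy]$ indeed descends to a map on isomorphism classes. It therefore remains to show that $F$ is onto and one-to-one.

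For surjectivity, I would fix a spanning tree $T$ and, for each edge $e\notin T$, let $\gamma_e$ be the loop that runs through $T$ from the base vertex $i$ to the start of $e$, crosses $e$, and returns through $T$; the classes $[\gamma_e]$ freely generate $\pi_1(\complex X^1,i)$. Given a homomorphism $\rho\colon\pi_1(\complex X^1,i)\to\GL(\kk,\FF)$, I would equip the product bundle $\underline{\FF^\kk}$ with the connection defined by $\eta_{\ij}=\id_{\FF^\kk}$ on every tree edge and $\eta_e=\rho([\gamma_e])$ on every non-tree edge. Since the tree transports are trivial, the monodromy around $\gamma_e$ is exactly $\eta_e=\rho([\gamma_e])$, and as the $[\gamma_e]$ generate freely the monodromy of $\underline{\FF^\kk}$ coincides with $\rho$ in the standard basis. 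Hence $F$ hits the class $[\rho]$.

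For injectivity, suppose $\spc E$ and $\tilde{\spc E}$ have conjugate monodromies. After choosing bases this yields an intertwiner $f_{i}\colon\spc E_{i}\to\tilde{\spc E}_{i}$ at the base vertex satisfying $\tilde P_\gamma\circ f_{i}=f_{i}\circ P_\gamma$ for every edge loop $\gamma$ at $i$. Using $T$ I would propagate this to all vertices by setting $f_j:=\tilde P_{\tau_j}\circ f_{i}\circ P_{\tau_j}^{-1}$, where $\tau_j$ is the unique tree path from $i$ to $j$. Each $f_j$ is a composite of isomorphisms, hence a fiberwise isomorphism, so the only thing to check is the compatibility $\tilde\eta_{\ij}\circ f_i=f_j\circ\eta_{\ij}$ on every edge. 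For this I would apply the intertwining relation to the loop $\gamma=\tau_j^{-1}\,\ij\,\tau_i$, whose transport is $P_\gamma=P_{\tau_j}^{-1}\circ\eta_{\ij}\circ P_{\tau_i}$ (and likewise for $\tilde P_\gamma$), and then rearrange; cancelling the tree transports via the definitions of $f_i$ and $f_j$ makes the desired identity fall out. A morphism whose fiber maps are all isomorphisms has an inverse that is again a morphism, so $f$ is an isomorphism of bundles with connection, giving $[\spc E]=[\tilde{\spc E}]$.

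I expect the edge-compatibility check in the injectivity step to be the main obstacle: it is precisely where the abstract hypothesis of conjugate monodromies must be turned into a genuine bundle isomorphism, and the bookkeeping of compositions and inverses along $\tau_i$, $\tau_j$ and $\ij$ has to be carried out with care. The freeness of $\pi_1$ of a connected graph on the complement of a spanning tree is classical and should be cited rather than reproved. Finally I would note that neither construction depends on the chosen base vertex or tree beyond isomorphism, which is exactly why both sides are taken up to conjugacy and up to isomorphism, respectively.
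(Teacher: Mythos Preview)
Your proposal is correct and follows essentially the same approach as the paper: both arguments fix a spanning tree, build surjectivity by placing the identity on tree edges and $\rho([\gamma_e])$ on non-tree edges of the product bundle, and prove injectivity by propagating a base-vertex intertwiner along the tree. The only cosmetic difference is that the paper phrases injectivity in terms of transported bases (reducing to $\rho=\tilde\rho$ and comparing matrix representations edge by edge), whereas you keep the intertwiner $f_i$ explicit and verify compatibility via the loop $\tau_j^{-1}\,ij\,\tau_i$; also note that you overload $i$ for both the base vertex and a generic edge endpoint, which you should disambiguate in a final write-up.
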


\begin{proof}
By \prpref{prp:isomorphicmonodromy}, \(F\) is well-defined. First we show injectivity. Consider two discrete vector bundles \(\spc E\), \(\tilde{\spc E}\) over \(\complex X\) with connections \(\eta\), \(\tilde\eta\) and let \(\monodromy\), \(\tilde\monodromy\) denote their monodromies. Suppose that \([\monodromy]= [\tilde\monodromy]\). If we choose bases \(\{X^1_i,\ldots,X^\kk_i\}\) of \(\spc E_i\) and \(\{\tilde X^1_i,\ldots,\tilde X^\kk_i\}\) of \(\tilde{\spc E}_i\), then \(\monodromy\) and \(\tilde\monodromy\) are represented by group homomorphisms \(\rho,\tilde\rho\in \mathrm{Hom}\bigl(\pi_1(\complex X^1, i), \GL(\kk,\FF)\bigr)\) which are related by conjugation. Without loss of generality, we can assume that \(\rho=\tilde\rho\). Now, let \(\complex T\) be a spanning tree of \(\complex X\) with root \(i\). Then for each vertex \(j\) of \(\complex X\) there is an edge path \(\gamma_{\,i,j}\) from the root \(i\) to the vertex \(j\) entirely contained in \(\mathcal T\). Since \(\mathcal T\) contains no loops, the path \(\gamma_{\,i,j}\) is essentially unique, i.e. any two such paths differ by a sequence of elementary moves. Thus we can use the parallel transport to obtain bases \(\{X^1_j,\ldots,X^\kk_j\}\subset \spc E_j\) and \(\{\tilde X^1_j,\ldots,\tilde X^\kk_j\}\subset \tilde{\spc E}_j\) at every vertex \(j\) of \(\complex X\). With respect to these bases the connections \(\eta\) and \(\tilde\eta\) are represented by elements of \(\GL(\kk,\FF)\). By construction, for each edge \(e\) in \(\mathcal T\) the connection is represented by the identity matrix. Moreover, to each edge \(e=jk\) not contained in \(\mathcal T\) there corresponds a unique \([\gamma_e]\in \pi_1(\complex X^1, i)\). With the notation above, it is given by \(\gamma_e = \gamma^{-1}_{i,k}e\, \gamma_{\,i,j}\). In particular, on the edge \(e\) both connections are represented by the same matrix \(\rho([\gamma_{e}])=\tilde\rho([\gamma_e])\). Thus, if we define \(f\colon \spc E \to \tilde{\spc E}\) by \(f(X^m_j):=\tilde{X}^m_j\) for \(m=1,\ldots,\kk\), we obtain an isomorphism, i.e. \(\spc E\cong \tilde{\spc E}\). Hence \(F\) is injective.

To see that \(F\) is surjective we use \(\mathcal T\) to equip the product bundle \(\spc E:= \mathcal V\times \FF^\kk\) with a particular connection \(\eta\). Let \(\rho\in \mathrm{Hom}\bigl(\pi_1(\complex X^1, i),\GL(\kk,\FF)\bigr)\). If \(e\) lies in \(\mathcal T\) we set \(\eta_e = \id\) else we set \(\eta_e:= \rho([\gamma_e])\). By construction, \(F([\spc E])=[\rho]\). Thus \(F\) is surjective.
\end{proof}

\begin{remark}
Note that \thmref{thm:discretekobayashi} can be regarded as a discrete analogue of a theorem of S. Kobayashi (\cite{kobayashi,morrison}), which states that the equivalence classes of connections on principal \(G\)-bundles over a manifold \(M\) are in one-to-one correspondence with the conjugacy classes of continuous homomorphisms from the {\it path group \(\Phi(\spc M)\)} to the structure group \(G\). In fact, the fundamental group of the \(1\)-skeleton is a discrete analogue of \(\Phi(\spc M)\). 
\end{remark}

\section{Discrete Line Bundles - The Abelian Case}

In this section we want to focus on {\it discrete line bundles}, i.e. discrete vector bundle of rank \(1\). Here the monodromy descends to a group homomorphism from the closed \(1\)-chains to the multiplicative group \(\FF_\ast := \FF\setminus\{0\}\) of the underlying field. This leads to a description by discrete differential forms (\secref{sec:discrete-connection-forms}). 

Let \(\spc L\to \complex X\) be discrete \(\FF\)-line bundle over a connected simplicial complex. In this case the structure group is just \(\FF_\ast\), which is abelian. Thus we obtain
\[
\mathrm{Hom}\bigl(\pi_1(\complex X^1,i),\FF_\ast)\bigr)/_\sim = \mathrm{Hom}\bigl(\pi_1(\complex X^1, i), \FF_\ast\bigr).
\] 
\(\mathrm{Hom}\bigl(\pi_1(\complex X^1, i), \FF_\ast\bigr)\) carries a natural group structure. Moreover, the isomorphism classes of discrete line bundles over \(\complex X\) form an abelian group. The group structure is given by the tensor product: For \([\spc L], [\tilde{\spc L}]\in \mathfrak{V}_\FF^{1}(\complex X)\), we have
\[
[\spc L][\tilde{\spc L}] = [\spc L\otimes \tilde{\spc L}],\quad [\spc L]^{-1} = [\spc L^\ast].   
\]  
The identity element is given by the trivial bundle. In the following we will denote the {\it group of isomorphism classes of \(\FF\)-line bundles over \(\complex X\)} by \(\mathcal{L}_{\complex X}^\FF\). 

The map \(F\colon \mathcal{L}_{\complex X}^\FF\to \mathrm{Hom}\bigl(\pi_1(\complex X^1, i), \FF_\ast\bigr)\), \([\spc L]\mapsto [\monodromy]\) is a group homomorphism. By \thmref{thm:discretekobayashi}, \(F\) is then an isomorphism. 

Now, since \(\FF_\ast\) is abelian, each homomorphism \(\rho \in \mathrm{Hom}\bigl(\pi_1(\complex X^1, i), \FF_\ast\bigr)\) factors through the {\it abelianization} 
\[
\pi_1(\complex X^1, i)_{ab}= \pi_1(\complex X^1, i)/[\pi_1(\complex X^1, i),\pi_1(\complex X^1, i)],
\] 
i.e. for each \(\rho\in \mathrm{Hom}\bigl(\pi_1(\complex X^1, i), \FF_\ast\bigr)\) there is a unique \(\rho_{ab}\in \mathrm{Hom}\bigl(\pi_1(\complex X^1, i)_{ab}, \FF_\ast\bigr)\) such that
\[
\rho = \rho_{ab}\circ \pi_{ab}.
\]
Here \(\pi_{ab} \colon \pi_1(\complex X^1, i) \to \pi_1(\complex X^1, i)_{ab}\) denotes the canonical projection. This yields an isomorphism between \(\mathrm{Hom}\bigl(\pi_1(\complex X^1, i), \FF_\ast\bigr)\) and \(\mathrm{Hom}\bigl(\pi_1(\complex X^1, i)_{ab}, \FF_\ast\bigr)\). In particular, 
\[
\mathcal{L}_{\complex X}^\FF \cong \mathrm{Hom}\bigl(\pi_1(\complex X^1, i)_{ab}, \FF_\ast\bigr).
\] 
As we will see below, the abelianization \(\pi_1(\complex X^1, i)_{ab}\) is naturally isomorphic to the group of closed \(1\)-chains.

The {\it group of \(k\)-chains} \(\mathrm{C}_k(\complex X, \ZZ)\) is defined as the free abelian group which is generated by the \(k\)-simplices of \(\complex X\). More precisely, let \(\complex{X}^{or}_k\) denote the {\it set of oriented \(k\)-simplices of \(\complex X\)}. Clearly, for \(k>0\), each \(k\)-simplex has two orientations. Interchanging these orientations yields a fixed-point-free involution \(\rho_k \colon \complex X_k^{or} \to \complex X_k^{or}\). The group of \(k\)-chains is then explicitly given as follows: 
\[
\mathrm{C}_k(\complex X, \ZZ) := \bigl\{c \colon \complex X_k^{or} \to \ZZ \mid c\circ \rho_k = -c \bigr\}.
\]
Since simplices of dimension zero have only one orientation, \(\complex X_0^{or}=\complex X_0\). Thus,
\[
\mathrm{C}_0(\complex X, \ZZ) := \bigl\{c \colon \complex X_k^{or} \to \ZZ \bigr\}.
\]
It is common to identify an oriented \(k\)-simplex \(\sigma\) with its {\it elementary \(k\)-chain}, i.e. the chain which is \(1\) for \(\sigma\), \(-1\) for the oppositely oriented simplex and zero else. With this identification a \(k\)-chain \(c\) can be written as a formal sum of oriented \(k\)-simplices with integer coefficients: 
\[
c = \sum_{i=1}^m n_i \sigma_i,\quad n_i\in \ZZ, \, \sigma_i \in \complex X_k^{or}.
\] 
The {\it boundary operator \(\partial_k\colon \mathrm{C}_k(\complex X,\ZZ) \to \mathrm{C}_{k-1}(\complex X,\ZZ)\)} is then the homomorphism which is uniquely determined by 
\[ 
\partial_k \,i_0\cdots i_k= \sum_{j=0}^k (-1)^j\, i_0\cdots\widehat{i_j}\cdots i_k.
\]
It well-known that \(\partial_{k}\circ\partial_{k+1}\equiv 0\). Thus we get a chain complex 
\[
0 \xleftarrow{\partial_{0}}\mathrm{C}_{0}(\complex X,\ZZ)\xleftarrow{\partial_{1}}\mathrm{C}_{1}(\complex X,\ZZ)\xleftarrow{\partial_{2}}\cdots\xleftarrow{\partial_{k}}\mathrm{C}_{k}(\complex X,\ZZ)\xleftarrow{\partial_{k+1}}\cdots.
\] 
The {\it simplicial Homology groups \(\mathrm{H}_k (\complex X,\ZZ)\)} may be regarded as a measure for the deviation of exactness:
\[
\mathrm{H}_k (\complex X,\ZZ) : = \kernel\partial_k/\image\partial_{k+1}.
\]
The elements of \(\kernel\partial_k\) are called {\it \(k\)-cycles}, those of \(\image\partial_{k+1}\) are called {\it \(k\)-boundaries}.

It is a well-known fact that the abelianization of the first fundamental group is the first homology group (see e.g. \cite{hatcher}). Now, since the first homology of the \(1\)-skeleton consists exactly of all closed chains of \(\complex X\), we obtain
\[
\pi_1(\complex X^1, i)_{ab} \cong \ker \partial_1.
\]
The isomorphism is induced by the map \(\pi_1(\complex X^1, i) \to \ker\partial_1\) given by \([\gamma]\mapsto \sum_j e_j\), where \(\gamma=e_\ell\cdots e_1\). We summarize the above discussion in the following theorem.

\begin{theorem}\label{thm:classificationoflinebundles}
The group of isomorphism classes of line bundles \(\mathcal L_{\complex X}^\FF\) is naturally isomorphic to the group \(\mathrm{Hom}(\ker \partial_1, \FF_\ast)\):
\[
\mathcal L_{\complex X}^\FF \cong \mathrm{Hom}(\ker \partial_1, \FF_\ast).
\]
\end{theorem}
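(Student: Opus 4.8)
\emph{Plan.} The statement is essentially a corollary of the discussion that precedes it: the proof I would give merely composes the chain of isomorphisms already assembled, and the only point deserving genuine attention is the word \emph{naturally}. The plan is to start from \thmref{thm:discretekobayashi} specialised to rank $\kk=1$, then transport the monodromy representation through two further canonical identifications until it becomes a homomorphism defined on $\ker\partial_1$.

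First I would specialise \thmref{thm:discretekobayashi} to line bundles. Here the structure group is $\GL(1,\FF)=\FF_\ast$, which is abelian, so conjugation acts trivially and the quotient $\mathrm{Hom}(\pi_1(\complex X^1,i),\FF_\ast)/_\sim$ collapses to $\mathrm{Hom}(\pi_1(\complex X^1,i),\FF_\ast)$; thus $F\colon \mathcal L_{\complex X}^\FF\to \mathrm{Hom}(\pi_1(\complex X^1,i),\FF_\ast)$ is a bijection. I would then note that $F$ is a group homomorphism: the monodromy of a tensor product $\spc L\otimes\tilde{\spc L}$ around any loop is the product of the two individual monodromies, so $F$ carries the tensor-product group structure on $\mathcal L_{\complex X}^\FF$ to pointwise multiplication, hence is a group isomorphism.

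Next I would apply the universal property of the abelianization: since $\FF_\ast$ is abelian, every $\rho\in\mathrm{Hom}(\pi_1(\complex X^1,i),\FF_\ast)$ factors uniquely as $\rho=\rho_{ab}\circ\pi_{ab}$, giving a natural isomorphism $\mathrm{Hom}(\pi_1(\complex X^1,i),\FF_\ast)\cong\mathrm{Hom}(\pi_1(\complex X^1,i)_{ab},\FF_\ast)$. Finally I would identify $\pi_1(\complex X^1,i)_{ab}$ with $\ker\partial_1$: by the classical identification of the first homology with the abelianized fundamental group, $\pi_1(\complex X^1,i)_{ab}\cong\mathrm H_1(\complex X^1,\ZZ)$, and because the $1$-skeleton $\complex X^1$ carries no $2$-chains we have $\image\partial_2=0$ there, so $\mathrm H_1(\complex X^1,\ZZ)=\ker\partial_1$; explicitly the isomorphism is the one induced by $[\gamma]\mapsto\sum_j e_j$ for $\gamma=e_\ell\cdots e_1$. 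Composing the three isomorphisms yields $\mathcal L_{\complex X}^\FF\cong\mathrm{Hom}(\ker\partial_1,\FF_\ast)$.

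The one point requiring real work is the word \emph{naturally}: the right-hand side $\mathrm{Hom}(\ker\partial_1,\FF_\ast)$ involves no base vertex, whereas both $F$ and the Hurewicz step are phrased through a chosen base $i$ and (in \thmref{thm:discretekobayashi}) a spanning tree, so I must confirm the composite is canonical. Rather than tracking the choices, I would argue directly: for a line bundle the product $\eta_{e_\ell}\circ\cdots\circ\eta_{e_1}$ around a \emph{closed} edge loop is a self-map of a one-dimensional fibre, hence multiplication by a well-defined scalar in $\FF_\ast$ that needs no trivialisation to define. Extending $\ZZ$-linearly, this assigns to each $1$-cycle $z\in\ker\partial_1$ a scalar and gives a homomorphism $\ker\partial_1\to\FF_\ast$ depending only on $[\spc L]$. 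This canonical map, which never refers to a base vertex or a tree, is the isomorphism; naturality is then immediate, and consistency with the chosen-base description follows because changing $i$ alters the monodromy representation only by an inner automorphism of $\FF_\ast$, which is trivial. I expect this independence-of-choices verification to be the only non-formal part of the argument.
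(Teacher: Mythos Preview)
Your proposal is correct and mirrors the paper's approach exactly: the paper presents this theorem as a summary of the preceding discussion, which runs through precisely the same chain---specialise \thmref{thm:discretekobayashi} to $\kk=1$, collapse conjugation since $\FF_\ast$ is abelian, observe $F$ is a group homomorphism, factor through the abelianization, and identify $\pi_1(\complex X^1,i)_{ab}\cong\ker\partial_1$ via the Hurewicz map $[\gamma]\mapsto\sum_j e_j$. Your explicit verification of naturality (independence of base vertex and spanning tree) actually goes beyond what the paper spells out; the paper simply asserts the isomorphism is natural without further comment.
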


The isomorphism of \thmref{thm:classificationoflinebundles} can be made explicit using discrete \(\FF_\ast\)-valued \(1\)-forms associated to the connection of a discrete line bundle.

\section{Discrete Connection Forms}
\label{sec:discrete-connection-forms}

Throughout this section \(\complex X\) denotes a connected simplicial complex.

\begin{definition}
Let \(\group G\) be an abelian group. The {\it group of \(\group G\)-valued discrete \(k\)-forms} is defined as follows:
\[
\Omega^k(\complex X,\group G):=\bigl\{\omega \colon \mathrm{C}_k(\complex X) \to \group G\mid \omega \text{ group homomorphism}\bigr\}.
\]
The {\it discrete exterior derivative} \(d_k\) is then defined to be the adjoint of \(\partial_{k+1}\), i.e.
\[
d_k \colon \Omega^k(\complex X,\group G) \to \Omega^{k+1}(\complex X,\group G), \quad d_k\omega := \omega\circ\partial_{k+1}.
\]
\end{definition}

By construction, we immediately get that \(d_{k+1}\circ d_k\equiv 0\). The corresponding cochain complex is called the {\it discrete de Rahm complex with coefficients in \(\group G\)}:
\[
0 \rightarrow\Omega^0(\complex X,\group G)\xrightarrow{d_0}\Omega^1(\complex X,\group G)\xrightarrow{d_{1}}\cdots\xrightarrow{d_{k-1}}\Omega^k(\complex X,\group G)\xrightarrow{d_{k}}\cdots.
\]
In analogy to the construction of the homology groups, the {\it \(k\)-th de Rahm Cohomology group \(\mathrm{H}^k (\complex X,\group G)\) with coefficients in \(\group G\)} is defined as the quotient group 
\[
\mathrm{H}^k (\complex X,\group G) : = \kernel d_k/\image d_{k-1}.
\]
The discrete \(k\)-forms in \(\kernel d_k\) are called {\it closed}, those in \(\image d_{k-1}\) are called {\it exact}.

Now, let \(\mathfrak{C}_{\spc L}\) denote the {\it space of connections} on the discrete \(\FF\)-line bundle \(\spc L \to \complex X\):
\[
\mathfrak{C}_{\spc L} : = \bigl\{\eta \mid \eta \textit{ connection on }\spc L\bigr\}.
\] 
Any two connections \(\eta,\theta\in\mathfrak{C}_{\spc L}\) differ by a unique discrete \(1\)-form \(\omega\in \Omega^1(\complex X,\FF_\ast)\):
\[
\theta = \omega \eta.
\]
Hence the group \(\Omega^1(\mathcal X, \FF_\ast)\) acts simply transitively on the space of connections \(\mathfrak C_{\spc L}\). In particular, each choice of a {\it base connection} \(\beta \in \mathfrak C_{\spc L}\) establishes an identification
\[
\mathfrak C_{\spc L} \ni \eta =\omega\beta \longleftrightarrow \omega \in \Omega^1(\mathcal X, \FF_\ast).
\]
\begin{remark}
Note that each discrete vector bundle admits a trivial connection. To see this choose for each vertex a basis of the corresponding fiber. The corresponding coordinates establish an identification with the product bundle. Then there is a unique connection that makes the diagrams over all edges commute.  
\end{remark}

\begin{definition}
Let \(\eta\in \mathfrak{C}_{\spc L}\). A {\it connection form representing the connection \(\eta\)} is a \(1\)-form \(\omega\in \Omega^1(\complex X, \FF_\ast)\) such that \(\eta = \omega\beta\) for some trivial base connection \(\beta\).  
\end{definition}

Clearly, there are many connection forms representing a connection. We want to see how two such forms are related. 

More generally, two connections \(\eta\) and \(\theta\) in \(\mathfrak{C}_{\spc L} \) lead to isomorphic discrete line bundles if and only if for each fiber there is a vector space isomorphism \(f_i\colon \spc L_i \to \spc L_i\), such that for each edge \(\ij\):
\[
\theta_{\ij}\circ f_{i} = f_{j} \circ \eta_{\ij}.
\] 
Since \(\eta_e\) and \(\theta_e\) are linear, this boils down to discrete \(\FF_\ast\)-valued functions and the relation characterizing an isomorphism becomes 
\[
\theta_{\ij} = \bigl(g_jg_i^{-1}\bigr) \eta_{\ij} = (dg)_{\ij} \eta_{\ij},
\]
i.e. \(\eta\) and \(\theta\) differ by an exact discrete \(\FF_\ast\)-valued \(1\)-form. In particular, the difference of two connection forms representing the same connection \(\eta\) is exact. 

Thus we obtain a well-defined map sending a discrete line bundle \(\spc L\) with connection to the corresponding equivalence class of connection forms 
\[
[\omega]\in \Omega^1(\complex X, \FF_\ast)/d\Omega^0(\complex X, \FF_\ast).
\] 

\begin{theorem}\label{thm:classificationbyconnectionforms}
The map \(F\colon\mathcal L_{\complex X}^\FF\to\Omega^1(\complex X, \FF_\ast)/d\Omega^0(\complex X, \FF_\ast)\), \([\spc L]\mapsto [\omega]\), where \(\omega\) is a connection form of \(\spc L\), is an isomorphism of groups.
\end{theorem}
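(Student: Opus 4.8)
The plan is to verify, in turn, that $F$ is well defined, that it is a homomorphism of groups, and that it is bijective. Well-definedness is essentially already contained in the discussion preceding the statement, so I would only recall it: for a fixed connection $\eta$ any two representing connection forms differ by an exact $1$-form, and two connections yielding isomorphic line bundles differ by a factor of the form $(dg)_{ij}=g_jg_i^{-1}$, i.e. by an exact form; hence the class $[\omega]\in\Omega^1(\complex X,\FF_\ast)/d\Omega^0(\complex X,\FF_\ast)$ depends only on the isomorphism class $[\spc L]\in\mathcal L_{\complex X}^\FF$. At this point I would also record the group structure on the target: since $\FF_\ast$ is abelian, the pointwise product of two homomorphisms $\mathrm C_1(\complex X)\to\FF_\ast$ is again a homomorphism, so $\Omega^1(\complex X,\FF_\ast)$ is an abelian group with identity the constant form $\mathbf 1$, the exact forms form a subgroup, and the quotient is a well-defined abelian group.

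Next I would check compatibility with the two group laws, namely the tensor product on the source and pointwise multiplication on the target. Choosing trivial base connections $\beta,\tilde\beta$ induced by fiberwise bases $e_i\in\spc L_i$, $\tilde e_i\in\tilde{\spc L}_i$, I write $\eta=\omega\beta$ and $\tilde\eta=\tilde\omega\tilde\beta$. The vectors $e_i\otimes\tilde e_i$ trivialize $\spc L\otimes\tilde{\spc L}$ and exhibit $\beta\otimes\tilde\beta$ as a trivial base connection, while the tensor-product connection acts by $(\eta\otimes\tilde\eta)_{ij}(e_i\otimes\tilde e_i)=\omega_{ij}\tilde\omega_{ij}\,(e_j\otimes\tilde e_j)$. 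Thus $\omega\tilde\omega$ is a connection form of $\spc L\otimes\tilde{\spc L}$, and therefore $F([\spc L][\tilde{\spc L}])=[\omega\tilde\omega]=F([\spc L])\,F([\tilde{\spc L}])$.

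For bijectivity I would argue both directions explicitly, which suffices since $F$ is a homomorphism. For injectivity it is enough to show the kernel is trivial: if $F([\spc L])=[\mathbf 1]$ then some connection form is exact, $\omega=dg$, so that $\eta=(dg)\beta$ presents $\eta$ as differing from the trivial base connection $\beta$ by an exact form, whence $\spc L$ is trivial and $[\spc L]$ is the identity. For surjectivity, given $[\omega]$ I would equip the product bundle $\mathcal V\times\FF$ with its canonical trivial connection $\beta$ and define $\eta:=\omega\beta$; by construction $\omega$ is a connection form of this bundle, so $F$ sends its class to $[\omega]$.

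Since the groundwork was already prepared in the preceding section, this proof is largely an assembly of facts and I expect no single difficult step. The point demanding the most care is the homomorphism property: one must observe that the tensor product $\beta\otimes\tilde\beta$ of two trivial base connections is again a trivial base connection on the tensor product bundle, for only then do the connection forms multiply pointwise and the tensor-product law on $\mathcal L_{\complex X}^\FF$ correspond to the multiplicative law on $\Omega^1(\complex X,\FF_\ast)/d\Omega^0(\complex X,\FF_\ast)$.
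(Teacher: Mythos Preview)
Your proof is correct and follows essentially the same approach as the paper's: well-definedness from the preceding discussion, the homomorphism property via the observation that the tensor product of trivial base connections is again trivial, and bijectivity by the kernel argument and the explicit product-bundle construction. The paper is terser (it dispatches injectivity and surjectivity in one line each), but the substance is identical.
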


\begin{proof}
Clearly, \(F\) is well-defined. Let \(\spc L\) and \(\tilde{\spc L}\) be two discrete complex line bundle with connections \(\eta\) and \(\theta\), respectively. If \(\beta\in \mathfrak{C}_{\spc L}\) and \(\tilde\beta\in \mathfrak{C}_{\tilde{\spc L}}\) are trivial, so is \(\beta \otimes \tilde\beta \in \mathfrak{C}_{\spc L\otimes \tilde{\spc L}}\). Hence, with \(\eta= \omega\beta\) and \(\tilde\eta= \tilde\omega\tilde\beta\), we get
\[
F([\spc L\otimes\tilde{\spc L}]) = [\omega\tilde\omega]= [\omega][\tilde\omega]= F([\spc L])F([\tilde{\spc L}]).
\]
By the preceding discussion, \(F\) is injective. Surjectivity is also easily checked.
\end{proof}

% Next we will prove that \(\Omega^1(\complex X, \FF_\ast)/d\Omega^0(\complex X, \FF_\ast)\) is isomorphic to \(\mathrm{Hom}(\ker\partial_1,\FF_\ast)\). The isomorphism is given by the identification
% \[
% \Omega^1(\complex X, \FF_\ast)/d\Omega^0(\complex X, \FF_\ast)\ni [\omega] \mapsto \left.\omega\right|_{\ker\partial_1}\in \mathrm{Hom}(\ker\partial_1,\FF_\ast). 
% \]
Next we want to prove that the map given by
\[
\Omega^1(\complex X, \FF_\ast)/d\Omega^0(\complex X, \FF_\ast)\ni [\omega] \mapsto \left.\omega\right|_{\ker\partial_1}\in \mathrm{Hom}(\ker\partial_1,\FF_\ast)
\]
is a group isomorphism. Clearly, it is a well-defined group homomorphism. We show its bijectivity in two steps. First, the surjectivity is provided by the following

\begin{lemma}\label{lma:surjectivityofrestriction}
Let \(\complex X\) be a simplicial complex and \(\group G\) be an abelian group. Then the restriction map \(\Phi\colon \Omega^k(\complex X, \group G) \to \mathrm{Hom}(\kernel \partial_k, \group G),\,\omega \mapsto \left.\omega\right|_{\kernel\partial_k}\) is surjective.
\end{lemma}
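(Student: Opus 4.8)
The plan is to recognize $\Phi$ as the map $\iota^{*}=\mathrm{Hom}(\iota,\group G)$ induced on $\mathrm{Hom}(-,\group G)$ by the inclusion $\iota\colon\kernel\partial_k\hookrightarrow\mathrm{C}_k(\complex X,\ZZ)$, and to prove surjectivity by exhibiting an algebraic splitting of $\iota$. Indeed, since $\Phi(\omega)=\omega|_{\kernel\partial_k}=\omega\circ\iota$, surjectivity of $\Phi$ for \emph{every} abelian group $\group G$ is equivalent to $\iota$ being a split monomorphism. So once $\iota$ admits a retraction $p\colon\mathrm{C}_k(\complex X,\ZZ)\to\kernel\partial_k$ with $p\circ\iota=\id$, any homomorphism $\psi\in\mathrm{Hom}(\kernel\partial_k,\group G)$ extends to $\omega:=\psi\circ p$, and then $\Phi(\omega)=\omega|_{\kernel\partial_k}=\psi$.

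To construct such a retraction I would consider the short exact sequence of abelian groups
\[
0 \longrightarrow \kernel\partial_k \xrightarrow{\ \iota\ } \mathrm{C}_k(\complex X,\ZZ) \xrightarrow{\ \partial_k\ } \image\partial_k \longrightarrow 0,
\]
in which $\image\partial_k$ is regarded as a subgroup of $\mathrm{C}_{k-1}(\complex X,\ZZ)$. The crucial observation is that $\mathrm{C}_{k-1}(\complex X,\ZZ)$ is a finitely generated free abelian group, $\complex X$ being finite, so $\mathrm{C}_{k-1}(\complex X,\ZZ)\cong\ZZ^{m}$ for some $m$. By the standard structure theorem, every subgroup of $\ZZ^{m}$ is again free abelian; hence $\image\partial_k$ is free, and in particular projective.

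Because $\image\partial_k$ is projective, the short exact sequence splits: there is a section $s\colon\image\partial_k\to\mathrm{C}_k(\complex X,\ZZ)$ with $\partial_k\circ s=\id$. Setting $p:=\id-s\circ\partial_k$ yields a homomorphism whose image lies in $\kernel\partial_k$, since $\partial_k\circ p=\partial_k-\partial_k\circ s\circ\partial_k=0$, and which restricts to the identity on $\kernel\partial_k$, because $s\circ\partial_k$ vanishes there. This is exactly the desired retraction, and composing as above produces the required extension $\omega$ of an arbitrary $\psi$.

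I expect the only nontrivial ingredient to be the freeness of $\image\partial_k$; this is precisely where finiteness of $\complex X$ enters, through the theorem that subgroups of finitely generated free abelian groups are free. Everything else is a formal consequence of projectivity and costs nothing beyond bookkeeping. (It is worth noting that the corresponding statement for arbitrary subgroups would fail without projectivity of the quotient: surjectivity of the restriction map genuinely relies on $\image\partial_k$ being a direct summand of $\mathrm{C}_k(\complex X,\ZZ)$, not merely a subgroup.)
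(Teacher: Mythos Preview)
Your proof is correct and takes a cleaner, more conceptual route than the paper. The paper proceeds in two steps: first it treats the case $\group G = \ZZ$ by choosing orientations so that $\partial_k$ becomes an integer matrix, bringing it to Hermite normal form via a unimodular $U$, and reading off an explicit $\ZZ$-basis $a_1,\ldots,a_\ell$ of $\kernel\partial_k$ from the columns of $U$ annihilated by $\partial_k$; it then extends to arbitrary $\group G$ by producing dual $\ZZ$-valued forms $\omega_i$ (with $\omega_i(a_j)=\delta_{ij}$) and using the $\ZZ$-action on $\group G$ to set $\omega = \sum_i \omega_i\cdot\mu(a_i)$. Your argument identifies the same underlying structural fact---that $\kernel\partial_k$ is a direct summand of $\mathrm{C}_k(\complex X,\ZZ)$ because $\image\partial_k$ is free---but invokes it directly via projectivity, splitting the short exact sequence and handling all $\group G$ at once without the intermediate $\ZZ$-step. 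The paper's approach is more explicit and constructive (one actually sees a basis and the extending form), while yours is shorter and makes the dependence on the algebraic input transparent. One small remark: the theorem that subgroups of free abelian groups are free holds without finite generation, so your argument does not in fact require the finiteness of $\complex X$ in the way you suggest.
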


\begin{proof}
If we choose an orientation for each simplex in \(\complex X\), then \(\partial_k\) is given by an integer matrix. Now, there is a unimodular matrix \(U\) such that \(\partial_k U = (0| H)\) has Hermite normal form. Write \(U = (A| B)\), where \(\partial_k A =0\) and \(\partial_k B =H\) and let \(a_i\) denote the columns of \(A\), i.e. \(A=(a_1,\ldots,a_\ell)\). Clearly, \(a_i\in \kernel\partial_k\). Moreover, if \(c\in \kernel\partial_k\), then \(0= \partial_k c= (0|H) U^{-1}c\). Hence \(U^{-1}c= (q,0)^\top\), \(q\in\ZZ^\ell\), and thus \(c = Aq\). Therefore \(\{a_i\mid i=1,\ldots,\ell\}\) is a basis of \(\kernel \partial_k\). Now, let \(\mu \in \mathrm{Hom}(\kernel\partial_k, \ZZ)\). A homomorphism is completely determined by its values on a basis. We define \(\omega= (\mu(a_1),\ldots,\mu(a_\ell),0\ldots,0)U^{-1}\). Then \(\omega\in \Omega^k(\complex X , \ZZ)\) and \(\omega A = (\mu(a_1),\ldots,\mu(a_\ell))\). Hence \(\Phi(\omega)= \mu\) and \(\Phi\) is surjective for forms with coefficients in \(\ZZ\). Now, let \(\group G\) be an arbitrary abelian group. And \(\mu\in\mathrm{Hom}(\kernel\partial_k, \group G)\). Now, if \(a_1,..,a_\ell\) is an arbitrary basis of \(\kernel\partial_k\), then there are forms \(\omega_1,\ldots,\omega_\ell\in \Omega^k(\complex X, \ZZ)\) such that \(\omega_i(a_j)=\delta_{\ij}\). Since \(\ZZ\) acts on \(\group G\), we can multiply \(\omega_i\) with elements \(g\in\group G\) to obtain forms with coefficients in \(\group G\). Now, set \(\omega= \sum_{i=1}^\ell \omega_i\cdot \mu(a_i)\). Then \(\omega\in\Omega^k(\complex X, \group G)\) and \(\omega(a_i)= \mu(a_i)\) for \(i=1,\ldots,\ell\). Thus \(\Phi(\omega)= \mu\). Hence \(\Phi\) is surjective for forms with coefficients in arbitrary abelian groups.
\end{proof}

The injectivity is actually easy to see: If \(\left.\omega\right|_{\ker\partial_1}=0\), we define an \(\FF_\ast\)-valued function \(f\) by {\it integration along paths}: Fix some vertex \(i\). Then
\[
f(j):= \int_{\gamma} \omega := \sum_{e\in \gamma}\omega(e),
\]
where \(\gamma\) is some path joining \(i\) to \(j\). Since \(\left.\omega\right|_{\ker\partial_1}=0\), the value \(f(j)\) does not depend on the choice of the path \(\gamma\). Moreover, \(df= \omega\). Together with \lmaref{lma:surjectivityofrestriction}, this yields the following theorem.

\begin{theorem}\label{thm:isomorphismquotienthomomorphismonkernelk1}
The map \(F\colon \Omega^1(\complex X, \FF_\ast)/d\Omega^0(\complex X, \FF_\ast)\to \mathrm{Hom}(\ker\partial_1 , \FF_\ast),\, [\omega]\mapsto \left.\omega\right|_{\ker\partial_1}\) is an isomorphism of groups.
\end{theorem}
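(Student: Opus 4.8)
The map $F$ has already been recorded as a well-defined group homomorphism, so my plan is only to establish surjectivity and injectivity, each of which is almost entirely prepared by the preceding discussion. The organizing observation is that $F$ is induced from the restriction map $\Phi\colon \Omega^1(\complex X,\FF_\ast)\to \mathrm{Hom}(\ker\partial_1,\FF_\ast)$ of \lmaref{lma:surjectivityofrestriction} by passing to the quotient by $d\Omega^0(\complex X,\FF_\ast)$: if $q\colon \Omega^1(\complex X,\FF_\ast)\to \Omega^1(\complex X,\FF_\ast)/d\Omega^0(\complex X,\FF_\ast)$ is the canonical projection, then $F\circ q = \Phi$. This factorization is legitimate precisely because exact forms vanish on cycles, which is the well-definedness already granted.

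Surjectivity is then immediate: applying \lmaref{lma:surjectivityofrestriction} with $k=1$ and $\group G=\FF_\ast$ shows that $\Phi$ is onto, and since $q$ is onto, the identity $F\circ q=\Phi$ forces $F$ to be onto as well. Here the lemma carries the real content, producing a basis of $\ker\partial_1$ from the Hermite normal form of $\partial_1$ and then extending a prescribed homomorphism to a global $1$-form; I would simply invoke it as a black box.

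For injectivity I would take a class $[\omega]$ with $\left.\omega\right|_{\ker\partial_1}$ trivial and exhibit $\omega$ as exact. Fix a base vertex $i$; since $\complex X$ is connected, every vertex $j$ is joined to $i$ by an edge path $\gamma$, and I set $f(j)$ to be the value of $\omega$ on the $1$-chain carried by $\gamma$, i.e.\ multiplicatively $f(j)=\prod_{e\in\gamma}\omega(e)$, the ``integral'' of the excerpt. The crux is well-definedness: if $\gamma,\gamma'$ are two edge paths from $i$ to $j$, the boundary of each associated chain is $j-i$, so their difference is a cycle in $\ker\partial_1$, on which $\omega$ is trivial by hypothesis; hence $f(j)$ is independent of the chosen path. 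This $f$ extends to an element of $\Omega^0(\complex X,\FF_\ast)$, and evaluating $df=f\circ\partial_1$ on an edge $e=jk$—using that prepending $e$ to a path from $i$ to $j$ yields a path from $i$ to $k$ whose chain is larger by exactly $e$—gives $(df)(e)=\omega(e)$. Thus $\omega=df$ is exact and $[\omega]=0$.

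The only genuinely delicate point is the well-definedness of $f$, and that is exactly where the hypothesis $\left.\omega\right|_{\ker\partial_1}=0$ is consumed; everything else is bookkeeping. The one caveat to keep straight is that $\FF_\ast$ is the multiplicative group, so the additive ``sum along a path'' notation of the excerpt must be read as the product of the $\omega(e)$, equivalently as the evaluation of the homomorphism $\omega$ on the path's $1$-chain. With the heavier machinery (normal forms and the existence of a basis of $\ker\partial_1$) confined to \lmaref{lma:surjectivityofrestriction}, the remaining argument is short and self-contained.
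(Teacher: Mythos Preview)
Your proposal is correct and follows the paper's own argument essentially verbatim: surjectivity is read off from \lmaref{lma:surjectivityofrestriction}, and injectivity is obtained by defining $f$ via path-integration from a fixed base vertex, using $\left.\omega\right|_{\ker\partial_1}=0$ for well-definedness and checking $df=\omega$. Your added remarks on the factorization $F\circ q=\Phi$ and on reading the ``integral'' multiplicatively are helpful clarifications but do not change the approach.
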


Now, let us relate this to \thmref{thm:classificationoflinebundles}. Let \(\spc L\to \complex X\) be a line bundle with connection \(\eta\) and \(\omega\) be a connection form representing \(\eta\), i.e. \(\eta= \omega\beta\) for some trivial base connection \(\beta\). Let \([\gamma]\in \pi_1(\complex X^1, i)\), where \(\gamma=e_\ell\cdots e_1\). By linearity and since trivial connections have vanishing monodromy, we obtain
\[
\monodromy([\gamma])=\eta_{e_\ell}\circ \cdots \circ \eta_{e_1} = \omega_{e_\ell}\cdots\omega_{e_1}\cdot\beta_{e_\ell}\circ \cdots \circ \beta_{e_1}= \omega(\pi_{ab}([\gamma])) \cdot \left.\id\right|_{\spc L_i}.    
\]
Hence, by the uniqueness of \([\monodromy]_{ab}\), we obtain the following theorem that brings everything nicely together.
\begin{theorem}\label{thm:connectionformsandmonodromy}
Let \(\spc L \to \complex X\) be a line bundle with connection \(\eta\). Let \(\monodromy\) denote its monodromy and let \(\omega\) be some connection form representing \(\eta\). Then, with the identifications above,  
\[
[\monodromy]_{ab} = [\omega].
\]
\end{theorem}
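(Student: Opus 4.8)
The plan is to prove the identity by evaluating both sides on an arbitrary element of $\ker\partial_1$ and checking that they produce the same scalar in $\FF_\ast$. After the identifications recorded in Theorem~\ref{thm:classificationoflinebundles} and Theorem~\ref{thm:isomorphismquotienthomomorphismonkernelk1}, the class $[\monodromy]_{ab}$ is the homomorphism $\ker\partial_1\to\FF_\ast$ obtained by factoring the monodromy through the abelianization $\pi_1(\complex X^1,i)_{ab}\cong\ker\partial_1$, while $[\omega]$ corresponds to the restriction $\omega|_{\ker\partial_1}$. So it suffices to show that these two homomorphisms agree, and for that I would only need to compute the monodromy around a single edge loop.

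First I would fix the data. Since $\omega$ represents $\eta$, there is a trivial base connection $\beta$ with $\eta=\omega\beta$, that is $\eta_e=\omega_e\,\beta_e$ on every edge $e$, where $\omega_e:=\omega(e)\in\FF_\ast$ acts by scalar multiplication. The decisive point is that $\spc L$ has rank one: every automorphism of the fiber $\spc L_i$ is multiplication by a scalar, hence central in $\mathrm{Aut}(\spc L_i)\cong\FF_\ast$. Given an edge loop $\gamma=e_\ell\cdots e_1$ based at $i$, I would therefore compute $\monodromy([\gamma])=\eta_{e_\ell}\circ\cdots\circ\eta_{e_1}$ by extracting all the scalars $\omega_{e_k}$ from the composition, which is permissible precisely because they are central. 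This isolates the product $\omega_{e_1}\cdots\omega_{e_\ell}$ times the composite $\beta_{e_\ell}\circ\cdots\circ\beta_{e_1}$.

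Next I would use that a trivial connection has vanishing monodromy, so $\beta_{e_\ell}\circ\cdots\circ\beta_{e_1}=\id_{\spc L_i}$ for the loop $\gamma$, and that the surviving scalar $\omega_{e_1}\cdots\omega_{e_\ell}$ is exactly the value of $\omega$ on the closed $1$-chain $e_1+\cdots+e_\ell=\pi_{ab}([\gamma])$; here I use both that $\omega$ is a homomorphism on chains and that $\FF_\ast$ is abelian, so the order of the factors does not matter. Under $\mathrm{Aut}(\spc L_i)\cong\FF_\ast$ this yields $\monodromy([\gamma])=\omega(\pi_{ab}([\gamma]))$, i.e. $\monodromy=(\omega|_{\ker\partial_1})\circ\pi_{ab}$ as homomorphisms out of $\pi_1(\complex X^1,i)$.

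Finally I would conclude with the universal property of the abelianization: because $\pi_{ab}$ is surjective, a homomorphism that factors through it determines its factor uniquely, so the factored monodromy $[\monodromy]_{ab}$ must coincide with $\omega|_{\ker\partial_1}$, which is $[\omega]$. I expect no genuine obstacle here; the one thing demanding care is the simultaneous bookkeeping of the three identifications in play, namely $\mathrm{Aut}(\spc L_i)\cong\FF_\ast$, $\pi_1(\complex X^1,i)_{ab}\cong\ker\partial_1$, and $[\omega]\leftrightarrow\omega|_{\ker\partial_1}$. The computational content is forced once rank one makes the parallel transports scalar.
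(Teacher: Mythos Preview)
Your proposal is correct and follows essentially the same argument as the paper: write $\eta=\omega\beta$ for a trivial base connection $\beta$, compute $\monodromy([\gamma])=\eta_{e_\ell}\circ\cdots\circ\eta_{e_1}$ by pulling out the scalars $\omega_{e_k}$ and using that $\beta$ has trivial monodromy, then identify the resulting product with $\omega(\pi_{ab}([\gamma]))$ and invoke uniqueness of the abelianization factor. The paper compresses this into one displayed equation, whereas you spell out the bookkeeping of the identifications more carefully, but the content is the same.
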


\section{Curvature - A Discrete Analogue of Weil's Theorem}
\label{sec:classifying-line-bundles}

In this section we describe complex and hermitian line bundles by their curvature. For the first time we use more than the \(1\)-skeleton.

Let \(\complex X\) be a connected simplicial complex and \(\group G\) an abelian group. Since \(d^2=0\), the exterior derivative descends to a well-defined map on \(\Omega^k(\complex X, \group G)/d\Omega^{k-1}(\complex X, \group G)\), which again will be denoted by \(d\). Explicitly,
\[
d\colon \Omega^k(\complex X, \group G)/d\Omega^{k-1}(\complex X, \group G)\to \Omega^{k+1}(\complex X, \group G), \quad [\omega] \mapsto d\omega. 
\]
\begin{definition}
The {\it \(\FF_\ast\)-curvature} of a discrete \(\FF\)-line bundle \(\spc L \to \complex X\) is the discrete \(2\)-form \(\Omega\in\Omega^2(\complex X, \FF_\ast)\) given by
\[
\Omega= d[\omega],
\]
where \([\omega]\in\Omega^1(\complex X, \FF_\ast)/d\Omega^0(\complex X, \FF_\ast)\) represents the isomorphism class \([\spc L]\). 
\end{definition}

\begin{remark}
Note that \(\Omega\) just encodes the parallel transport along the boundary of the oriented \(2\)-simplices of \(\complex X\) - the ``local monodromy''.
\end{remark}

From the definition it is obvious that the \(\FF_\ast\)-curvature is invariant under isomorphisms. Thus, given a prescribed \(2\)-form \(\Omega\in\Omega^2(\complex X, \FF_\ast)\), it is a natural question to ask how many non-isomorphic line bundles have curvature \(\Omega\).  

Actually, this question is answered easily: If \(d[\omega]=\Omega=d[\tilde\omega]\), then the difference of \(\omega\) and \(\tilde\omega\) is closed. Factoring out the exact \(1\)-forms we see that the space of non-isomorphic line bundles with curvature \(\Omega\) can be parameterized by the first cohomology group \(\mathrm{H}^1(\complex X, \FF_\ast)\). Furthermore, the existence of a line bundle with curvature \(\Omega\in \Omega^2(\complex X, \FF_\ast)\) is equivalent to the exactness of \(\Omega\). 

But when is a \(k\)-form \(\Omega\) exact? Certainly it must be closed. Even more, it must vanish on every closed \(k\)-chain: If \(\Omega = \image\, d\) and \(S\) is a closed \(k\)-chain, then 
\[
\Omega(S) = d\omega(S)= \omega(\partial S)= 0.
\]
For \(k=1\), as we have seen, this criterion is sufficient for exactness. For \(k>1\) this is not true with coefficients in arbitrary groups. 

\begin{example}
Consider a triangulation \(\complex X\) of the real projective plane \(\RR\mathrm{P}^2\). The zero-chain is the only closed \(2\)-chain and hence each \(\ZZ_2\)-valued \(2\)-form vanishes on every closed \(2\)-chain. But \(\mathrm{H}^2(\complex X, \ZZ_2)= \ZZ_2\) and hence there exists a non-exact \(2\)-form.
\end{example}

In the following we will see that this cannot happen for fields of characteristic zero or, more generally, for groups that arise as the image of such fields.  

Clearly, there is a natural pairing of \(\ZZ\)-modules between \(\Omega^k(\complex X,\group G)\) and \(\mathrm{C}_k(\complex X,\ZZ)\):
\[
\langle.,.\rangle \colon \Omega^k(\complex X,\group G)\times \mathrm{C}_k(\complex X,\ZZ) \to \group G, \quad (\omega,c)\mapsto \omega(c).
\]
This pairing is degenerate if and only if all elements of \(\group G\) have bounded order. In particular, if \(\group G\) is a field \(\FF\) of characteristic zero, \(\langle.,.\rangle\) yields a group homomorphism
\[
F_k\colon \mathrm{C}_k(\complex X, \ZZ) \to \mathrm{Hom}_{\FF}(\Omega^k(\complex X, \FF), \FF) = (\Omega^k(\complex X, \FF))^\ast.
\]
A basis of \(\mathrm{C}_k(\complex X, \ZZ)\) is mapped under \(F_k\) to a basis of \((\Omega^k(\complex X, \FF))^\ast\) and hence \(\mathrm{C}_k(\complex X, \ZZ)\) appears as an \(n_k\)-dimensional lattice in \((\Omega^k(\complex X, \FF))^\ast\).

Let \(d^\ast_k\) denote the adjoint of the discrete exterior derivative \(d_k\) with respect to the natural pairing between \(\Omega^k(\complex X,\FF)\) and \((\Omega^k(\complex X, \FF))^\ast\). Clearly, 
\[
d_k^\ast\circ F_k = F_k\circ \partial_{k+1}.
\] 
Now, since the simplicial complex is finite, we can choose bases of \(\mathrm{C}_k(\complex X, \ZZ)\) for all \(k\). This in turn yields bases of \((\Omega^k(\complex X,\FF))^\ast\) and hence, by duality, bases of \(\Omega^k(\complex X, \FF)\). With respect to these bases we have 
\begin{equation}\label{eq:identifications}
\mathrm{C}_k(\complex X, \ZZ) = \ZZ^{n_k} \subset \FF^{n_k}= (\Omega^k(\complex X,\FF))^\ast = \Omega^k(\complex X,\FF),
\end{equation} 
where \(n_k\) denotes the number of \(k\)-simplices. Moreover, the pairing is represented by the standard product. The operator \(d_{k-1}^\ast = \partial_k\) is then just an integer matrix and
\[
\partial_k= d_{k-1}^\top.
\]
We have \(\image d_{k-1}\perp \kernel d_{k-1}^\ast\). Moreover, by the rank-nullity theorem, 
\[
n_k= \mathrm{dim}\,\image d_{k-1}^\ast + \mathrm{dim}\,\kernel d_{k-1}^\ast= \mathrm{dim}\,\image d_{k-1} + \mathrm{dim}\,\kernel d_{k-1}^\ast.
\]
Hence, under the identifications above, we have that \(\FF^{n_k} = \image d_{k-1}\obot \kernel d_{k-1}^\ast\) (see \figref{fig:orthogonalsum}). Moreover, \(\ker\partial_k\) contains a basis of \(\kernel d_{k-1}^\ast\). From this we conclude immediately the following lemma.
\begin{figure}[t]
\begin{center}
\includegraphics[width=6.1cm]{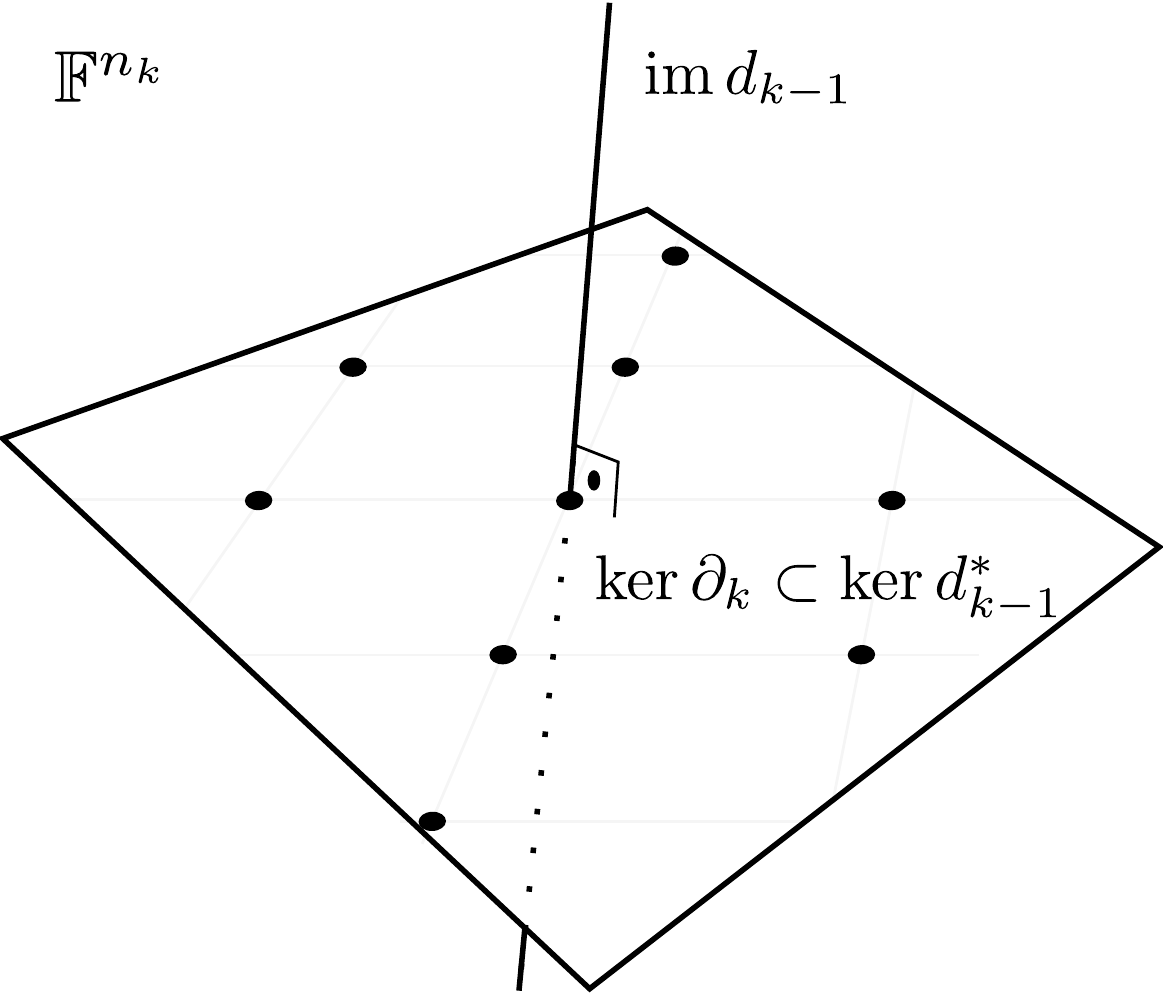}
\end{center}
\caption{With the identifications \ref{eq:identifications}, the space of \(k\)-forms becomes a direct sum of the image of \(d_{k-1}\) and the kernel of its adjoint \(d_{k-1}^\ast\), the latter of which contains the closed \(k\)-chains as a lattice.}\label{fig:orthogonalsum}	
\end{figure}

\begin{lemma}\label{lma:exactnesscriterionforfields}
Let \(\omega\in \Omega^k (\complex X, \FF)\), where \(\FF\) is a field of characteristic zero. Then 
\[
\omega \in\image d_{k-1}\Longleftrightarrow \langle \omega, c\rangle = 0 \textit{ for all } c\in \kernel \partial_k.
\]
\end{lemma}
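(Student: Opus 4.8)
I would prove the two implications separately, the forward one being essentially already in hand. For $\omega\in\image d_{k-1}\Rightarrow\langle\omega,c\rangle=0$ on $\ker\partial_k$, I would write $\omega=d_{k-1}\alpha$ and use the defining adjointness $d_{k-1}\alpha=\alpha\circ\partial_k$, so that $\langle\omega,c\rangle=\langle\alpha,\partial_k c\rangle=0$ whenever $\partial_k c=0$. This is exactly the computation recorded in the paragraph preceding the lemma; it uses nothing about the coefficient group and is valid over any abelian $\group G$.

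For the converse I would work inside the identification \eqref{eq:identifications}, under which $\Omega^k(\complex X,\FF)=\FF^{n_k}$ carries the standard non-degenerate pairing, $d_{k-1}^\ast=\partial_k$, and, by the rank-nullity computation recorded just above the lemma, one has the orthogonal splitting $\FF^{n_k}=\image d_{k-1}\obot\ker d_{k-1}^\ast$. Given $\omega$ with $\langle\omega,c\rangle=0$ for every $c\in\ker\partial_k$, I would decompose $\omega=\omega_1+\omega_2$ along this splitting, with $\omega_1\in\image d_{k-1}$ and $\omega_2\in\ker d_{k-1}^\ast$, and reduce the claim to showing $\omega_2=0$. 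Since $\omega_1\in\image d_{k-1}$ is orthogonal to $\ker d_{k-1}^\ast$ and every integer cycle $c$ lies in $\ker d_{k-1}^\ast$, the hypothesis collapses to $\langle\omega_2,c\rangle=0$ for all $c\in\ker\partial_k$. Using that these integer cycles span $\ker d_{k-1}^\ast$ over $\FF$, I would upgrade this to $\langle\omega_2,w\rangle=0$ for all $w\in\ker d_{k-1}^\ast$; as $\omega_2\in\ker d_{k-1}^\ast$ is moreover orthogonal to $\image d_{k-1}$, it then pairs trivially with all of $\FF^{n_k}$, so $\omega_2=0$ by non-degeneracy of the standard pairing, and $\omega=\omega_1\in\image d_{k-1}$.

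The single step that genuinely consumes the characteristic-zero hypothesis, and the one I expect to be the crux, is the upgrade from vanishing on the integer cycles $\ker\partial_k$ to vanishing on the entire $\FF$-subspace $\ker d_{k-1}^\ast$. This is legitimate precisely because, $\partial_k$ being an integer matrix and $\ZZ\hookrightarrow\FF$ in characteristic zero, $\ker\partial_k$ contains a $\ZZ$-basis that remains an $\FF$-basis of $\ker d_{k-1}^\ast$, the fact asserted immediately before the lemma. In positive characteristic this can fail: the $\RR\mathrm{P}^2$ example shows the closed integer chains may be too few to detect a nonzero class, so the biconditional breaks. Everything else is formal linear algebra for the non-degenerate standard bilinear form, so no further computation is needed.
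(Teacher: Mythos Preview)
Your proof is correct and follows exactly the route the paper intends: the paper does not give a separate proof but says the lemma follows ``immediately'' from the orthogonal splitting $\FF^{n_k}=\image d_{k-1}\obot\ker d_{k-1}^\ast$ together with the fact that $\ker\partial_k$ contains an $\FF$-basis of $\ker d_{k-1}^\ast$, and your argument is precisely the spelling-out of that immediate conclusion. Your identification of where characteristic zero enters (the integer cycles spanning the $\FF$-kernel) matches the paper's setup verbatim.
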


\begin{remark}
Note, that for boundary cycles the condition is nothing but the closedness of the form \(\omega\). Thus \lmaref{lma:exactnesscriterionforfields} states that a closed form \(\omega \in \Omega^k(\complex X, \FF)\) is exact if and only if the integral over all homology classes \([c]\in \mathrm{H}_k (\complex X, \ZZ)\) vanishes.
\end{remark}

Let \(\group G\) be an abelian group. The sequence below will be referred to as the {\it \(k\)-th fundamental sequence of forms with coefficients in \(\group G\)}:
\[
\Omega^{k-1}(\complex X, \group G)\xrightarrow{d_{k-1}} \Omega^k(\complex X, \group G) \xrightarrow{\Phi_k} \mathrm{Hom}(\kernel \partial_k, \group G) \to 0,
\]
where \(\Phi_k\) denotes the restriction to the kernel of \(\partial_k\), i.e. \(\Phi_k(\omega):=\left.\omega\right|_{\ker\partial_k}\).  

Combining \lmaref{lma:surjectivityofrestriction} and \lmaref{lma:exactnesscriterionforfields} we obtain that the fundamental sequence with coefficients in a field \(\FF\) of characteristic zero is exact for all \(k>1\). This serves as an anchor point. The exactness propagates under surjective group homomorphisms.

\begin{lemma}\label{lma:propagationofexactness}
Let \(\group A \xrightarrow{f} \group B \to 0\) be a an exact sequence of abelian groups. Then, if the \(k\)-th fundamental sequence of forms is exact with coefficients in \(\group A\), so it is with coefficients in \(\group B\).
\end{lemma}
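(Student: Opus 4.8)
The plan is to view the surjection $f$ (which is what exactness of $\group A \xrightarrow{f} \group B \to 0$ means) as inducing a morphism between the two fundamental sequences and to run a short diagram chase, the one genuinely nontrivial ingredient being \lmaref{lma:surjectivityofrestriction} applied to the subgroup $\kernel f \subseteq \group A$. Since $\Omega^k(\complex X,\group G)=\mathrm{Hom}(\mathrm C_k(\complex X,\ZZ),\group G)$ and $\mathrm C_k(\complex X,\ZZ)$ is free abelian, postcomposition with $f$ gives homomorphisms $f_*\colon\Omega^k(\complex X,\group A)\to\Omega^k(\complex X,\group B)$ for all $k$. These commute with $d$ (because $d\omega=\omega\circ\partial$, so $f\circ(\omega\circ\partial)=(f\circ\omega)\circ\partial$) and with the restriction $\Phi_k$, so $f_*$ is a morphism of the $k$-th fundamental sequences. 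As $f$ is surjective and $\mathrm C_k$ is free, $f_*$ is itself surjective on forms. Now surjectivity of $\Phi_k$ with coefficients in $\group B$ already holds by \lmaref{lma:surjectivityofrestriction}, so the only thing to prove is exactness in the middle, i.e. $\kernel\Phi_k\subseteq\image d_{k-1}$ over $\group B$; the reverse inclusion is automatic since $d\omega(c)=\omega(\partial c)=0$ for $c\in\kernel\partial_k$.

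So let $\omega\in\Omega^k(\complex X,\group B)$ with $\left.\omega\right|_{\kernel\partial_k}=0$. First I would lift $\omega$ to some $\hat\omega\in\Omega^k(\complex X,\group A)$ with $f_*\hat\omega=\omega$, using surjectivity of $f_*$. The obstacle is that this naive lift need not satisfy $\left.\hat\omega\right|_{\kernel\partial_k}=0$, so exactness over $\group A$ is not yet applicable. But $f\circ\bigl(\left.\hat\omega\right|_{\kernel\partial_k}\bigr)=\left.\omega\right|_{\kernel\partial_k}=0$, so the discrepancy $\left.\hat\omega\right|_{\kernel\partial_k}$ is a homomorphism $\kernel\partial_k\to\kernel f$.

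The key correction step invokes \lmaref{lma:surjectivityofrestriction} for the group $\kernel f$: the restriction $\Omega^k(\complex X,\kernel f)\to\mathrm{Hom}(\kernel\partial_k,\kernel f)$ is surjective, so there is $\nu\in\Omega^k(\complex X,\kernel f)$ with $\left.\nu\right|_{\kernel\partial_k}=\left.\hat\omega\right|_{\kernel\partial_k}$. Regarding $\nu$ as $\group A$-valued via $\kernel f\hookrightarrow\group A$ and putting $\hat\omega':=\hat\omega-\nu$, I get $\left.\hat\omega'\right|_{\kernel\partial_k}=0$ while still $f_*\hat\omega'=\omega$, since $f$ annihilates $\kernel f$. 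Now exactness over $\group A$ yields $\hat\tau\in\Omega^{k-1}(\complex X,\group A)$ with $d_{k-1}\hat\tau=\hat\omega'$, and pushing forward gives $\tau:=f_*\hat\tau$ with $d_{k-1}\tau=f_*d_{k-1}\hat\tau=f_*\hat\omega'=\omega$. Hence $\omega\in\image d_{k-1}$ over $\group B$, as required.

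I expect the main obstacle to be exactly the correction step. The nontrivial phenomenon here, illustrated by the $\RR\mathrm{P}^2$ example, is that over a general abelian group vanishing on all cycles does \emph{not} force exactness, and a crude lift of $\omega$ destroys precisely the cycle-vanishing hypothesis. The resolution is that the discrepancy of the lift is $\kernel f$-valued, and \lmaref{lma:surjectivityofrestriction} — valid for \emph{every} abelian group, in particular $\kernel f$ — lets one realize that discrepancy by a genuine $k$-form and subtract it off. Once the lift is corrected, everything else is a routine transport of a primitive from $\group A$ to $\group B$ along $f_*$.
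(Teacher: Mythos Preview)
Your proof is correct and follows essentially the same approach as the paper: lift the $\group B$-valued form along the surjection $f$, observe that the lift restricted to cycles lands in $\kernel f$, invoke \lmaref{lma:surjectivityofrestriction} for the group $\kernel f$ to correct the lift so that it vanishes on cycles, apply exactness over $\group A$, and push the resulting primitive forward along $f$. The only difference is cosmetic---you phrase things in terms of the induced maps $f_*$ and a morphism of fundamental sequences, whereas the paper works directly with postcomposition by $f$---but the argument is step-for-step the same.
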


\begin{proof}
By \lmaref{lma:surjectivityofrestriction} the restriction map \(\Phi_k\) is surjective for every abelian group. It is left to check that \(\kernel \Phi_k = \image d_{k-1}\) with coefficients in \(\group B\). Let \(\Omega \in \Omega^k(\complex X, \group B)\) such that \(\Phi_k(\Omega)=0\). Since \(f\colon \group A \to \group B\) is surjective, there is a form \(\Xi\in \Omega^k(\complex X, \group A)\) such that \(\Omega = f\circ\Xi\). Since \(0=\Phi_k(\Omega) = f\circ\Phi_k(\Xi)\), we obtain that \(\Phi_k(\Xi)\) takes its values in \(\ker f\). Since \(\Phi_k\) is surjective for arbitrary groups, there is \(\Theta \in \Omega^k(\complex X, \ker f)\) such that \(\Phi_k(\Xi)= \Phi_k(\Theta)\). Hence \(\Phi_k(\Xi-\Theta)=0\). Thus there is a form \(\xi \in \Omega^{k-1}(\complex X, \group A)\) such that \(d_{k-1}\xi = \Xi- \Theta\). Now, let \(\omega := f\circ \xi \in \Omega^{k-1}(\complex X, \group B)\). Then 
\[
d_{k-1}\omega = d_{k-1} f\circ \xi = f\circ d_{k-1}\xi = f\circ (\Xi- \Theta) = f\circ \Xi =\Omega .
\]
Hence \(\kernel \Phi_k = \image d_{k-1}\) and the sequence (with coefficients in \(\group B\)) is exact.
\end{proof}

\begin{remark}\label{rmk:exactfundamentalsequence}
The map \(f\colon \CC \to \CC,\, z\mapsto \exp(2\pi \imath\, z)\) provides a surjective group homomorphism from \(\CC\) onto \(\CC_\ast\), and similarly from \(\RR\) onto \(\unitcircle\). Hence the \(k\)-th fundamental sequence of forms is exact for coefficients in \(\CC_\ast\) and in the unit circle \(\unitcircle\).
\end{remark}

\begin{remark}
The \(k\)-th fundamental sequence with coefficients in an abelian group \(\group G\) is exact if and only if \(\Omega^k(\complex X, \group G)/d\Omega^{k-1}(\complex X, \group G)\cong \mathrm{Hom}(\ker \partial_k, \group G)\). The isomorphism is induced by the restriction map \(\Phi_k\).
\end{remark}

The following corollary is a consequence of \rmkref{rmk:exactfundamentalsequence}. It nicely displays the fibration of the complex line bundles by their \(\CC_\ast\)-curvature.

\begin{corollary}\label{crl:exactsequencecorollary}
For \(\group G= \unitcircle,\, \CC_\ast\) the following sequence is exact:
\[
1\to\mathrm{H}^1(\complex X, \group G)\hookrightarrow \Omega^1(\complex X, \group G)/d\Omega^0(\complex X, \group G) \xrightarrow{d} \Omega^2(\complex X, \group G)\rightarrow \mathrm{Hom}(\kernel\partial_2,\group G) \to 1.
\]
\end{corollary}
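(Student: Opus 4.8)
The plan is to establish exactness separately at each of the four interior nodes of the sequence and to observe that three of these four checks are purely formal, so that all the genuine content is concentrated at the term $\Omega^2(\complex X,\group G)$. Throughout I write $d$ for the descended exterior derivative $[\omega]\mapsto d\omega$, which is well defined on $\Omega^1(\complex X,\group G)/d\Omega^0(\complex X,\group G)$ precisely because $d^2=0$, and $\Phi_2$ for the restriction map $\omega\mapsto\left.\omega\right|_{\kernel\partial_2}$. I would begin at the left. Exactness at $\mathrm{H}^1(\complex X,\group G)$ is just injectivity of the inclusion arrow, and this arrow sends a class $\omega+\image d_0\in\kernel d_1/\image d_0$ to the class $\omega+d\Omega^0$; since $d\Omega^0=\image d_0$, it is literally the inclusion $\kernel d_1/\image d_0\hookrightarrow\Omega^1(\complex X,\group G)/\image d_0$, which is manifestly injective.

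Next, exactness at $\Omega^1(\complex X,\group G)/d\Omega^0(\complex X,\group G)$ should follow immediately by unwinding the definitions of the two incident maps. The kernel of $d$ consists of the classes $[\omega]$ with $d\omega=0$, that is, with $\omega\in\kernel d_1$; modulo $d\Omega^0=\image d_0$ these are exactly the classes in the image of $\mathrm{H}^1(\complex X,\group G)=\kernel d_1/\image d_0$. Hence the image of the left arrow equals $\kernel d$, as required.

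Exactness at the right-hand end, $\mathrm{Hom}(\kernel\partial_2,\group G)\to 1$, means surjectivity of $\Phi_2$, and this is nothing but \lmaref{lma:surjectivityofrestriction} applied with $k=2$: that lemma gives surjectivity of the restriction map for every abelian group, in particular for $\group G=\unitcircle$ and $\group G=\CC_\ast$. So this node is handled by an already-proved result with no additional work.

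The one substantive point, and the step I expect to be the main obstacle, is exactness at $\Omega^2(\complex X,\group G)$, i.e. the identity $\image d=\kernel\Phi_2$. Here $\image d=\image d_1$ and $\kernel\Phi_2=\{\Omega\in\Omega^2(\complex X,\group G)\mid\left.\Omega\right|_{\kernel\partial_2}=0\}$, so this is precisely the assertion that the $2$-nd fundamental sequence of forms with coefficients in $\group G$ is exact. This is the single place where the hypothesis $\group G\in\{\unitcircle,\CC_\ast\}$ is genuinely used: the inclusion $\image d_1\subseteq\kernel\Phi_2$ is the trivial direction (a restriction of $d\omega$ to $\kernel\partial_2$ vanishes since $d\omega(c)=\omega(\partial_2 c)=0$), while the reverse inclusion fails for general coefficient groups, as the $\RR\mathrm P^2$ example shows. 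For $\group G=\unitcircle,\CC_\ast$, however, \rmkref{rmk:exactfundamentalsequence} supplies exactly this exactness, since these groups are surjective images of $\RR$ and $\CC$ under the exponential and \lmaref{lma:propagationofexactness} propagates the field-case exactness furnished by \lmaref{lma:surjectivityofrestriction} together with \lmaref{lma:exactnesscriterionforfields}. Once this middle exactness is invoked, concatenating the four verifications yields exactness of the entire sequence.
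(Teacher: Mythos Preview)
Your proof is correct and follows exactly the route the paper intends: the paper states the corollary as an immediate consequence of \rmkref{rmk:exactfundamentalsequence}, and your node-by-node verification simply unpacks this, with the three formal checks at the outer positions and the one substantive step at $\Omega^2(\complex X,\group G)$ being precisely the exactness of the second fundamental sequence supplied by that remark.
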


\begin{definition}
Let \(\Omega^\ast \in \Omega^k(\complex X, \unitcircle)\). A real-valued form \(\Omega\in \Omega^2(\complex X, \RR)\) is called {\it compatible with \(\Omega^\ast\)} if \(\Omega^\ast = \exp\bigl(\imath\Omega\bigr)\). A {\it discrete hermitian line bundle with curvature} is a discrete hermitian line bundle \(\spc L\) with connection equipped with a closed \(2\)-form compatible with the \(\unitcircle\)-curvature of \(\spc L\). 
\end{definition}

For real-valued forms it is common to denote the natural pairing with the \(k\)-chains by an integral sign, i.e. for \(\omega\in\Omega^k(\complex X, \RR)\) and \(c\in \mathrm{C}_k(\complex X, \ZZ)\) we write
\[
\int_{c} \omega := \langle\omega,c\rangle= \omega(c).
\]

\begin{theorem}\label{thm:integralityofcurvature}
Let \(\spc L\) be a discrete hermitian line bundle with curvature \(\Omega\). Then \(\Omega\) is integral, i.e.
\[
\int_{c} \Omega \in 2\pi\, \ZZ, \quad \textit{ for all } c \in \ker\partial_2. 
\] 
\end{theorem}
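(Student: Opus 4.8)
The plan is to read off integrality directly from the defining compatibility $\Omega^\ast=\exp(\imath\Omega)$ together with the fact that the $\unitcircle$-curvature $\Omega^\ast$, being a discrete exterior derivative, pairs trivially with every closed $2$-chain; the constant $2\pi$ then appears simply as the period of the exponential.

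First I would fix a connection form. Choosing the trivial base connection with respect to orthonormal frames, the unitary connection $\eta$ of the hermitian line bundle $\spc L$ is represented by a $\unitcircle$-valued connection form $\omega\in\Omega^1(\complex X,\unitcircle)$, and by definition of the $\unitcircle$-curvature (defined exactly as the $\FF_\ast$-curvature, but with coefficient group $\unitcircle$) we have $\Omega^\ast=d[\omega]=d\omega=\omega\circ\partial_2$. Consequently, for every closed $2$-chain $c\in\ker\partial_2$,
\[
\Omega^\ast(c)=\omega(\partial_2 c)=\omega(0)=1,
\]
where $1$ is the neutral element of the multiplicative group $\unitcircle$; here I only used that $\omega$ is a group homomorphism and that $\partial_2 c=0$.

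Next I would invoke compatibility. Since $\Omega$ is compatible with the $\unitcircle$-curvature, $\Omega^\ast$ is the composition of $\Omega\colon\mathrm C_2(\complex X,\ZZ)\to\RR$ with the homomorphism $t\mapsto e^{\imath t}$, so evaluating on the same $c$ gives
\[
\exp\!\Bigl(\imath\int_c\Omega\Bigr)=\exp\bigl(\imath\,\Omega(c)\bigr)=\Omega^\ast(c)=1.
\]
As the kernel of $t\mapsto e^{\imath t}$ is exactly $2\pi\ZZ$, this forces $\int_c\Omega\in2\pi\ZZ$ for all $c\in\ker\partial_2$, which is the claim.

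I do not expect any genuine obstacle: the argument is essentially that $\Omega^\ast(c)=\omega(\partial_2 c)$ equals the identity of $\unitcircle$ on every cycle $c$, transported through $\exp$. The only points deserving a moment's care are that the $\unitcircle$-curvature really equals $\omega\circ\partial_2$ for an honest representative $\omega$ (immediate, since $d$ descends to $\Omega^1/d\Omega^0$ and any representative satisfies $d\omega=\omega\circ\partial_2$), and that the connection form may be taken $\unitcircle$-valued, which is why orthonormal frames are used. It is worth noting that closedness of $\Omega$ is not needed for this particular statement: the compatibility relation already pins down $\int_c\Omega$ modulo $2\pi\ZZ$ on all of $\ker\partial_2$.
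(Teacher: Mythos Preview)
Your proof is correct and follows essentially the same route as the paper: write $\exp(\imath\Omega)=d\omega$ for a $\unitcircle$-valued connection form, pair with a $2$-cycle to get $\exp(\imath\int_c\Omega)=\omega(\partial c)=1$, and read off $\int_c\Omega\in 2\pi\ZZ$. Your additional remarks (why $\omega$ may be taken $\unitcircle$-valued, and that closedness of $\Omega$ is not used here) are accurate but not needed for the argument.
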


\begin{proof}
By definition the curvature form \(\Omega\) satisfies \(\exp\bigl(\imath\Omega\bigr)= d\omega\) for some connection form \(\omega\in \Omega^1(\complex X, \unitcircle)\). Thus, if \(c\in \ker\partial_2\),
\[\exp\bigl(\imath\!\!\int_{c}\Omega\bigr) = \langle\exp(\imath\Omega),c\rangle= \langle d\omega,c\rangle= \langle \omega,\partial c\rangle = 1. 
\]   
This proves the claim.
\end{proof}

Conversely, \crlref{crl:exactsequencecorollary} yields a discrete version of a theorem of Andr\'e Weil (\cite{weil,kostant}), which states that any closed smooth integral \(2\)-form on a manifold \(\spc M\) can be realized as the curvature of a hermitian line bundle. This plays a prominent role in the process of prequantization \cite{simms}.

\begin{theorem}
If \(\Omega\in \Omega^2(\complex X, \RR)\) is integral, then there exists a hermitian line bundle with curvature \(\Omega\).
\end{theorem}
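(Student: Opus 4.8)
The plan is to reduce the statement to the exactness of a single $\unitcircle$-valued $2$-form, which is exactly what \crlref{crl:exactsequencecorollary} governs. First I would transport the given real form through the exponential homomorphism $\RR\to\unitcircle$, $t\mapsto e^{\imath t}$, forming $\Omega^\ast:=\exp(\imath\Omega)\in\Omega^2(\complex X,\unitcircle)$. As recorded in the discussion preceding the corollary, a hermitian line bundle whose $\unitcircle$-curvature equals $\Omega^\ast$ exists precisely when $\Omega^\ast$ is exact, i.e. lies in the image of $d\colon\Omega^1(\complex X,\unitcircle)/d\Omega^0(\complex X,\unitcircle)\to\Omega^2(\complex X,\unitcircle)$; equivalently, such a bundle $\spc L$ with unitary connection $\eta$ is produced by any connection form $[\omega]$ with $d[\omega]=\Omega^\ast$ through the classification of hermitian line bundles by their $\unitcircle$-valued connection forms (the hermitian analogue of \thmref{thm:classificationbyconnectionforms}). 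So the whole statement comes down to showing $\Omega^\ast\in\image d$. Once $\spc L$ is in hand its $\unitcircle$-curvature is by construction $\Omega^\ast=\exp(\imath\Omega)$, so $\Omega$ is compatible with it, and---$\Omega$ being closed---the triple $(\spc L,\eta,\Omega)$ is the desired hermitian line bundle with curvature $\Omega$.

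The key step is therefore to verify $\Omega^\ast\in\image d$, and this is exactly where integrality enters. By \crlref{crl:exactsequencecorollary} the fundamental sequence for $\group G=\unitcircle$ is exact at $\Omega^2(\complex X,\unitcircle)$, so $\image d=\ker\Phi_2$, where $\Phi_2$ denotes restriction to $\ker\partial_2$. Hence it suffices to check that $\Phi_2(\Omega^\ast)$ is trivial, i.e. that $\Omega^\ast$ sends every closed $2$-chain to $1$. For $c\in\ker\partial_2$ one computes
\[
\Omega^\ast(c)=\exp\bigl(\imath\,\langle\Omega,c\rangle\bigr)=\exp\bigl(\imath\!\int_c\Omega\bigr)=1,
\]
the last equality being precisely the integrality hypothesis $\int_c\Omega\in 2\pi\,\ZZ$. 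Thus $\Phi_2(\Omega^\ast)$ is the identity homomorphism and $\Omega^\ast$ is exact, which is what I needed.

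I expect the genuine difficulty to be already absorbed into \crlref{crl:exactsequencecorollary} rather than to surface in this argument: the substance there is the exactness of the fundamental sequence with coefficients in $\unitcircle$, obtained by propagating the field-coefficient case (\lmaref{lma:exactnesscriterionforfields}) through the surjection $\RR\to\unitcircle$ via \lmaref{lma:propagationofexactness}. Granting that, the only real insight here is the observation that integrality of $\Omega$ is nothing but the condition $\Phi_2(\exp(\imath\Omega))=1$, the exact mirror of \thmref{thm:integralityofcurvature}. The one point I would be careful about is the closedness built into the definition of a hermitian line bundle with curvature: integrality forces $d\Omega$ to be $2\pi\,\ZZ$-valued but does not by itself make it vanish, so $\Omega$ must be taken closed---as it is for the curvature form of any such bundle, and as in the smooth Weil theorem---for $(\spc L,\eta,\Omega)$ to qualify as the object claimed.
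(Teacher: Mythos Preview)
Your proof is correct and follows essentially the same route as the paper: form $\Omega^\ast=\exp(\imath\Omega)$, use integrality to see that $\Omega^\ast$ vanishes on $\ker\partial_2$, and invoke \crlref{crl:exactsequencecorollary} to obtain a $\unitcircle$-valued primitive, which yields the desired bundle. Your closing remark about closedness is apt---the paper tacitly assumes it (as in the smooth Weil theorem it cites), since integrality alone only forces $d\Omega$ to be $2\pi\ZZ$-valued.
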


\begin{proof}
Consider \(\Omega^\ast:= \exp(i\Omega)\). Since \(\Omega\) is integral, \(\langle\Omega^\ast,c\rangle= 1\) for all \(c\in \kernel \partial_2\). By \crlref{crl:exactsequencecorollary}, there exists \(r\in \Omega^1(\complex X, \unitcircle)\) such that \(dr=\Omega^\ast\). This in turn defines a hermitian line bundle with curvature \(\Omega\).
\end{proof}

\begin{remark}
Moreover, \crlref{crl:exactsequencecorollary} shows that the connections of two such bundles differ by an element of \(\mathrm{H}^1(\complex X, \unitcircle)\). Thus the space of discrete hermitian line bundles with fixed curvature \(\Omega\) can be parameterized by \(\mathrm{H}^1(\complex X, \unitcircle)\).
\end{remark}

\section{The Index Formula for Hermitian Line Bundles}
\label{sec:index}

Before we define the degree of a discrete hermitian line bundle with curvature or the index form of a section, let us first recall the situation in the smooth setting. Therefore, let \(\spc L \to \spc M\) be a smooth hermitian line bundle with connection. Since the curvature tensor \(R^\nabla\) of \(\nabla\) is a \(2\)-form taking values in the skew-symmetric endomorphisms of \(\spc L\), it is completely described by a closed real-valued \(2\)-form \(\Omega\in \Omega^2(\spc M, \RR)\), 
\[
R^\nabla = -\imath \Omega.
\]
The following theorem shows an interesting relation between the index sum of a section \(\psi\in\Gamma(\spc L)\), the curvature \(2\)-form \(\Omega\), and the {\it rotation form \(\xi^\psi\) of \(\psi\)}. This form is defined as follows:
\[
\xi^\psi := \frac{\langle \nabla\psi, \imath\psi\rangle}{\langle \psi, \psi\rangle}.
\]

\begin{theorem}\label{thm:smoothindexformula}
Let \(\spc L \to \spc M\) be a smooth hermitian line bundle with connection and \(\Omega\) its curvature \(2\)-form. Let \(\psi \in \Gamma(\spc L)\) be a section with a discrete zero set \(Z\). Then, if \(C\) is a finite smooth \(2\)-chain such that \(\partial C\intersection Z = \emptyset\),
\[ 
2\pi\sum_{p\in {C\intersection Z}} \mathrm{ind}_p^\psi = \int_{\partial C} \xi^\psi + \int_{C} \Omega.
\]
\end{theorem}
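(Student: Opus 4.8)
The plan is to reduce everything to the normalized section $\phi = \psi/|\psi|$, which is well defined on $\spc M\setminus Z$, and then apply Stokes' theorem on the $2$-chain $C$ after excising small disks around the finitely many zeros of $\psi$ lying in $C$ (finitely many because $Z$ is discrete and $C$ is compact). First I would observe that the rotation form depends only on the normalized section: writing $\psi = |\psi|\,\phi$ and using that $\langle\phi,\imath\phi\rangle = 0$ and $\langle\nabla\phi,\phi\rangle = \tfrac12\, d\langle\phi,\phi\rangle = 0$ for a unit section, a short computation gives $\xi^\psi = \langle\nabla\phi,\imath\phi\rangle = \xi^\phi$ on $\spc M\setminus Z$. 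In particular $\nabla\phi = \xi^\phi\,\imath\phi$, since $\imath\phi$ spans the orthogonal complement of $\phi$ in each fiber.

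The key identity to establish is that $d\xi^\psi = -\Omega$ away from the zero set. I would obtain this by differentiating $\nabla\phi = \xi^\phi\,\imath\phi$ once more: applying $\nabla$ and using $\nabla(\imath\phi) = \imath\nabla\phi = -\xi^\phi\,\phi$, the wedge term $\xi^\phi\wedge\xi^\phi$ drops out and one is left with $R^\nabla\phi = (d\xi^\phi)\,\imath\phi$. Comparing with the defining relation $R^\nabla = -\imath\Omega$, i.e. $R^\nabla\phi = -\Omega\,\imath\phi$, yields $d\xi^\phi = -\Omega$, and hence $d\xi^\psi = -\Omega$ on $\spc M\setminus Z$.

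With this in hand the argument is a standard excision. For each zero $p\in C\intersection Z$ remove a small disk $D_p^\varepsilon$ and set $C_\varepsilon := C\setminus\bigcup_p D_p^\varepsilon$, so that $\partial C_\varepsilon = \partial C - \sum_p \partial D_p^\varepsilon$ with each small circle positively oriented. Since $\xi^\psi$ is smooth on $C_\varepsilon$, Stokes' theorem gives $\int_{\partial C_\varepsilon}\xi^\psi = \int_{C_\varepsilon} d\xi^\psi = -\int_{C_\varepsilon}\Omega$, whose right-hand side tends to $-\int_C\Omega$ as $\varepsilon\to 0$ because $\Omega$ is smooth across the $p$. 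For the boundary terms I would pass to a local trivialization by a unit section $\sigma$ near $p$, writing $\psi = f\sigma$ and $\nabla\sigma = \imath\alpha\,\sigma$ with $\alpha$ a real $1$-form; then $\xi^\psi = d\arg f + \alpha$, the integral of the smooth piece $\alpha$ over $\partial D_p^\varepsilon$ vanishes as $\varepsilon\to 0$, and $\int_{\partial D_p^\varepsilon} d\arg f = 2\pi\,\mathrm{ind}_p^\psi$ by the definition of the index as the winding number of $\phi$ about $p$. Passing to the limit and rearranging $\int_{\partial C}\xi^\psi - 2\pi\sum_p \mathrm{ind}_p^\psi = -\int_C\Omega$ then gives the claimed formula.

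The main obstacle is the bookkeeping of signs and conventions: fixing whether the hermitian pairing is conjugate-linear in the first or second slot, and verifying that the sign in $R^\nabla = -\imath\Omega$ combines with the orientation of the excised circles to produce $+\int_C\Omega$ rather than $-\int_C\Omega$. The interchange of limit and integration is harmless here, since $\Omega$ and $\alpha$ are smooth across $p$ while only $d\arg f$ carries the topological content; the one technical point worth stating is that $Z$ discrete and $\partial C\intersection Z = \emptyset$ guarantee the excised disks can be taken disjoint, contained in the interior, and missing $\partial C$.
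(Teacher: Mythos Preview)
Your argument is correct. The paper's proof takes a slightly different organizational route: it immediately reduces to a single triangle $C$ and picks once and for all a pointwise-normalized local section $\phi$ defined on all of $C$ (i.e.\ a trivialization that extends smoothly across the zeros), writes $\psi = z\,\phi$, and splits $\xi^\psi = d\arg z + \langle\nabla\phi,\imath\phi\rangle$. Since $\langle\nabla\phi,\imath\phi\rangle$ is smooth on the whole of $C$, Stokes applies to it directly and yields $-\int_C\Omega$, while $\int_{\partial C} d\arg z$ gives the index sum without any limiting procedure. Your approach instead normalizes $\phi = \psi/|\psi|$, which forces the excision of disks and the limit $\varepsilon\to 0$, and then you reintroduce a local trivialization $\sigma$ near each zero to evaluate the small-circle integrals. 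Both routes use the same two ingredients, $d\xi^\psi = -\Omega$ and the winding-number interpretation of the index; the paper's version just front-loads the trivialization to avoid the excision bookkeeping, at the cost of having to assume $C$ is small enough to trivialize (hence the reduction to a single triangle).
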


\begin{proof}
We can assume that \(C\) is a single smooth triangle. Then we can express \(\psi\) on \(C\) in terms of a complex-valued function \(z\) and a pointwise-normalized local section \(\phi\), i.e. \(\psi= z\, \phi\). Since \(\mathrm{Im}(\tfrac{dz}{z})= d\,\mathrm{arg}(z)\), we obtain
\[
\xi^\psi= \frac{1}{|z|^2}\langle dz\, \phi + z\, \nabla \phi, \imath z\,\phi\rangle = \langle \frac{dz}{z}\, \phi, \imath\phi\rangle + \langle \nabla \phi, \imath \phi\rangle = d\,\mathrm{arg}(z) + \langle \nabla \phi, \imath \phi\rangle.
\]
Moreover, away from zeros, we have 
\[
d\langle \nabla\phi, \imath\phi\rangle = \langle R^\nabla\phi,\imath\phi\rangle + \langle \nabla\phi \wedge\imath\nabla\phi\rangle = \langle R^\nabla\phi,\imath\phi\rangle = -\Omega.
\]
Hence we obtain
\[
\int_{\partial C} \xi^\psi = \int_{\partial C} d\, \mathrm{arg}(z) + \int_{\partial C} \langle \nabla \phi, \imath \phi\rangle = 2\pi\sum_{p\in {C\intersection Z}} \mathrm{index}_p (\psi) - \int_{C} \Omega.
\]
This proves the claim.
\end{proof}

In the case that \(\spc L\) is a hermitian line bundle with connection over a closed oriented surface \(\spc M\), \thmref{thm:smoothindexformula} tells us that \(\int_{\spc M}\Omega\in 2\pi\ZZ\). This yields a well-known topological invariant - the {\it degree of \(\spc L\)}:
\[
\mathrm{deg}\bigl(\spc L\bigr):= \frac{1}{2\pi}\int_{\spc M} \Omega.
\]
From \thmref{thm:smoothindexformula} we immediately obtain the famous Poincar\'e-Hopf index theorem.

\begin{theorem}
Let \(\spc L\to \spc M\) be a smooth hermitian line bundle over a closed oriented surface. Then, if \(\psi\in \Gamma(\spc L)\) is a section with isolated zeros,
\[
\mathrm{deg}\bigl(\spc L\bigr) = \sum_{p\in \spc M} \mathrm{ind}^\psi_p.
\] 
\end{theorem}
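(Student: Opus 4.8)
The plan is to apply the index formula \thmref{thm:smoothindexformula} with the finite smooth \(2\)-chain \(C\) taken to be the fundamental cycle of \(\spc M\) itself. Because \(\spc M\) is closed, its boundary is empty, so the hypothesis \(\partial C\intersection Z=\emptyset\) is satisfied vacuously, and the sum on the left of the index formula then ranges over all of \(C\intersection Z=Z\), i.e.\ over every zero of \(\psi\) on \(\spc M\). Conceptually the result is then immediate: the boundary integral of \(\xi^\psi\) drops out, \(\int_{\spc M}\Omega=2\pi\deg(\spc L)\) by definition of the degree, and dividing by \(2\pi\) gives the identity.

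To make this rigorous I would first address what ``integrating over \(C=\spc M\)'' means, since \(\xi^\psi\) is only defined away from \(Z\). I would fix a smooth triangulation of \(\spc M\) chosen so that each zero \(p\in Z\) lies in the interior of a single \(2\)-simplex and no zero lies on an edge; this is possible because \(Z\) is finite and discrete. Applying \thmref{thm:smoothindexformula} to each oriented triangle \(T\) of the triangulation gives
\[
2\pi\!\!\sum_{p\in T\intersection Z}\!\!\mathrm{ind}_p^\psi = \int_{\partial T}\xi^\psi + \int_{T}\Omega,
\]
which is legitimate because \(\partial T\) avoids \(Z\) by the choice of triangulation.

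Next I would sum this identity over all triangles \(T\). Each interior edge is shared by exactly two triangles inducing opposite orientations, so the contributions of \(\xi^\psi\) over interior edges cancel in pairs; since \(\spc M\) is closed there are no boundary edges, and the total \(\xi^\psi\)-contribution therefore vanishes. The curvature integrals add up to \(\int_{\spc M}\Omega\), and the left-hand sides add up to \(2\pi\sum_{p\in\spc M}\mathrm{ind}_p^\psi\). Invoking \(\deg(\spc L)=\tfrac{1}{2\pi}\int_{\spc M}\Omega\) and dividing by \(2\pi\) yields the claim.

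The argument is essentially immediate from \thmref{thm:smoothindexformula}; the only point requiring care — and the one I would treat as the main obstacle — is the bookkeeping in this excision/triangulation step: ensuring that the chosen triangulation places every zero in a face interior and off all edges, and verifying that the rotation-form contributions over shared interior edges genuinely cancel so that only \(\int_{\spc M}\Omega\) survives. Once this cancellation is confirmed, no further computation is needed.
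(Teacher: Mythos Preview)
Your proposal is correct and follows exactly the approach the paper takes: the paper simply says the result follows immediately from \thmref{thm:smoothindexformula}, applying it with \(C=\spc M\) so that the boundary term vanishes. Your triangulation argument just makes explicit the bookkeeping that the paper leaves implicit.
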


Now, let us consider the discrete case. In general, a {\it section} of a discrete vector bundle \(\spc E\to \complex X\) with vertex set \(\mathcal V\) is a map \(\psi\colon \mathcal V \to \spc E\) such that the following diagram commutes
\begin{center}
  \includegraphics[width= 1.9cm]{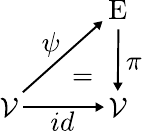},
\end{center}
i.e. \(\pi\circ\psi= id\). As in the smooth case, the space of sections of \(\spc E\) is denoted by \(\Gamma (\spc E)\).

Now, let \(\spc L\to \complex X\) be a discrete hermitian line bundle with curvature \(\Omega\) and let \(\psi\in \Gamma(\spc L)\) be a nowhere-vanishing section such that
\begin{equation}\label{eq:admissiblediscretesection}
\eta_{\ij}(\psi_i)\neq -\psi_j
\end{equation}
for each edge \(\ij\) of \(\complex X\). Here \(\eta\) denotes the connection of \(\spc L\) as usual. The {\it rotation form \(\xi^\psi\) of \(\psi\)} is then defined as follows:
\[
\xi^\psi_{\ij}:= \arg\bigl(\frac{\psi_j}{\eta_{\ij}(\psi_i)}\bigr) \in (-\pi,\pi).
\]
\begin{remark}
\eqref{eq:admissiblediscretesection} can be interpreted as the condition that no zero lies in the \(1\)-skeleton of \(\complex X\) (compare \secref{sec:finite-elements}). Actually, given a consistent choice of the argument on each oriented edge, we could drop this condition. Figuratively speaking, if a section has a zero in the \(1\)-skeleton, then we decide whether we push it to the left or the right face of the edge.     
\end{remark}
Now we can define the {\it index form} of a discrete section:

\begin{definition}
Let \(\spc L \to \complex X\) be a discrete hermitian line bundle with curvature \(\Omega\). For \(\psi\in \Gamma(\spc L)\), we define the {\it index form of \(\psi\)} by 
\[
\mathrm{ind}^\psi := \frac{1}{2\pi}\bigl(d\xi^\psi + \Omega\bigr).
\]
\end{definition}

\begin{theorem}
The index form of a nowhere-vanishing discrete section is \(\ZZ\)-valued.
\end{theorem}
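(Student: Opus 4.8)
The plan is to verify integrality face by face: since $\mathrm{ind}^\psi$ is a discrete $2$-form, it suffices to show that its value $\mathrm{ind}^\psi_{\ijk}=\tfrac{1}{2\pi}\bigl((d\xi^\psi)_{\ijk}+\Omega_{\ijk}\bigr)$ on each oriented $2$-simplex $\ijk$ is an integer. The key device is to trade the parallel transports for genuine complex numbers. Because $\spc L$ is a line bundle and $\psi$ is nowhere-vanishing, for every edge $\ij$ the two nonzero vectors $\psi_j$ and $\eta_{\ij}(\psi_i)$ lie in the same one-dimensional fiber $\spc L_j$, so their ratio
\[
z_{\ij}:=\frac{\psi_j}{\eta_{\ij}(\psi_i)}\in\CC_\ast
\]
is a well-defined nonzero complex number, and $\xi^\psi_{\ij}=\arg z_{\ij}$ with $\arg$ pinned to $(-\pi,\pi)$ by the admissibility condition $\eta_{\ij}(\psi_i)\neq-\psi_j$.

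First I would compute the monodromy along the boundary loop $\ij,jk,ki$ of the face by tracking these ratios. From $\eta_{\ij}(\psi_i)=z_{\ij}^{-1}\psi_j$ and the analogous identities on $jk$ and $ki$, applying the three transports successively gives
\[
\eta_{ki}\circ\eta_{jk}\circ\eta_{\ij}(\psi_i)=(z_{\ij}\,z_{jk}\,z_{ki})^{-1}\,\psi_i.
\]
Comparing this with the defining relation $\eta_{ki}\circ\eta_{jk}\circ\eta_{\ij}=e^{\imath\Omega_{\ijk}}\,\id_{\spc L_i}$ of a hermitian line bundle with curvature yields the scalar identity $z_{\ij}\,z_{jk}\,z_{ki}=e^{-\imath\Omega_{\ijk}}$.

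Next I would read off the discrete exterior derivative. Since $\xi^\psi$ is a $1$-form and $\partial(\ijk)=\ij+jk+ki$ as an oriented loop,
\[
(d\xi^\psi)_{\ijk}=\xi^\psi_{\ij}+\xi^\psi_{jk}+\xi^\psi_{ki}=\arg z_{\ij}+\arg z_{jk}+\arg z_{ki}\equiv\arg\bigl(z_{\ij}z_{jk}z_{ki}\bigr)\equiv -\Omega_{\ijk}\pmod{2\pi},
\]
where the last congruence uses the scalar identity above. Hence $(d\xi^\psi)_{\ijk}+\Omega_{\ijk}\in 2\pi\ZZ$, and dividing by $2\pi$ shows $\mathrm{ind}^\psi_{\ijk}\in\ZZ$, as claimed.

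I expect the only delicate point to be the passage to arguments modulo $2\pi$: each individual $\xi^\psi_{\ij}$ lives in the branch $(-\pi,\pi)$, but their sum $(d\xi^\psi)_{\ijk}$ ranges over a wider interval, so the equality with $-\Omega_{\ijk}$ holds only up to a multiple of $2\pi$. The content of the theorem is precisely that this unavoidable integer ambiguity is exactly what the index form records; once the congruence is established no further estimate is needed, and it is the reduction of the transports to the scalars $z_{\ij}$ — legitimate only because the fibers are one-dimensional — that makes the whole computation transparent.
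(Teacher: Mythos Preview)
Your proof is correct and follows essentially the same approach as the paper: both verify $\exp\bigl(\imath((d\xi^\psi)_{\ijk}+\Omega_{\ijk})\bigr)=1$ by multiplying the edge ratios around a face and invoking the curvature relation $\eta_{ki}\circ\eta_{jk}\circ\eta_{\ij}=e^{\imath\Omega_{\ijk}}\id$. The only cosmetic difference is that the paper first chooses a trivial base connection $\beta$ and a connection form $\omega$ to write $\psi=z\phi$ and $\xi^\psi_{\ij}=\arg\bigl(z_j/(\omega_{\ij}z_i)\bigr)$, whereas you work directly with the intrinsic ratios $z_{\ij}=\psi_j/\eta_{\ij}(\psi_i)$; your route is slightly more economical but the computation is the same.
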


\begin{proof}
Let \(\spc L\) be a discrete hermitian line bundle with curvature and let \(\eta\) be its connection. Let \(\psi \in \Gamma(\spc L)\) be a nowhere-vanishing section. Now, choose a connection form \(\omega\), i.e. \(\eta=\omega\beta\), where \(\beta\) is a trivial connection on \(\spc L\). Then we can write \(\psi\) with respect to a non-vanishing parallel section \(\phi\) of \(\beta\), i.e. there is a \(\CC\)-valued function \(z\) such that \(\psi = z\phi\). Then \(\xi^\psi_{\ij}= \arg \bigl(\frac{z_j}{\omega_{\ij}z_i}\bigr)\) and thus
\[
\exp\bigl(2\pi\imath\, d\xi^\psi_{\ijk}\bigr)=\exp\bigl(\imath\,\arg \bigl(\frac{z_i}{\omega_{ki}z_k}\bigr) + \imath\,\arg \bigl(\frac{z_j}{\omega_{\ij}z_i}\bigr) + \imath\,\arg \bigl(\frac{z_k}{\omega_{jk}z_j}\bigr)\bigr) = \frac{1}{d\omega_{\ijk}}.
\]
Thus
\[
\exp\bigl(2\pi\imath\, \mathrm{ind}^\psi_{\ijk}\bigr) = \frac{\exp\bigl(\imath\Omega_{\ijk}\bigr)}{d\omega_{\ijk}}= 1.  
\]
This proves the claim.
\end{proof}

If \(\spc L\) is a discrete hermitian line bundle with curvature \(\Omega\) over a closed oriented surface \(\complex X\), then we define the {\it degree of \(\spc L\)} just as in the smooth case:
\[
\mathrm{deg}\bigl(\spc L\bigr) := \frac{1}{2\pi} \int_{\complex X}\Omega.
\]
Here we have identified \(\complex X\) by the corresponding closed \(2\)-chain. From \thmref{thm:integralityofcurvature} we obtain the following corollary.

\begin{corollary}
The degree of a discrete hermitian line bundle with curvature is an integer:
\[
\mathrm{deg}\bigl(\spc L\bigr) \in \ZZ.
\]
\end{corollary}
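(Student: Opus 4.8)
The plan is to reduce the claim directly to \thmref{thm:integralityofcurvature}. The definition of the degree pairs the curvature form \(\Omega\) with the \(2\)-chain representing \(\complex X\), so the entire content of the corollary is that this particular chain is a cycle, i.e.\ that it lies in \(\kernel\partial_2\); once that is known, integrality of \(\Omega\) immediately gives \(\int_{\complex X}\Omega\in 2\pi\ZZ\).

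First I would make the identification of \(\complex X\) with a \(2\)-chain precise. Orient every \(2\)-simplex of \(\complex X\) compatibly with the chosen orientation of the surface, and let \(c:=\sum_{\sigma}\sigma\) be the formal sum of these coherently oriented triangles, so that by definition of the degree and of the pairing we have \(\int_{\complex X}\Omega=\langle\Omega,c\rangle\).

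Next I would verify that \(c\in\kernel\partial_2\). Since \(\complex X\) is a \emph{closed} oriented surface, each edge is a face of exactly two triangles, and a coherent orientation of a closed surface is precisely one for which these two triangles induce opposite orientations on their common edge. Consequently, in \(\partial_2 c=\sum_{\sigma}\partial_2\sigma\) every edge occurs exactly twice with cancelling signs, so \(\partial_2 c=0\) and \(c\) is a \(2\)-cycle.

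Finally, applying \thmref{thm:integralityofcurvature} to the cycle \(c\) gives \(\int_{\complex X}\Omega=\langle\Omega,c\rangle\in 2\pi\ZZ\), and dividing by \(2\pi\) yields \(\mathrm{deg}(\spc L)\in\ZZ\). There is no genuinely hard step here; the only point that requires any care is the orientation bookkeeping showing \(\partial_2 c=0\), and this is a standard fact about closed oriented triangulated surfaces rather than a real obstacle.
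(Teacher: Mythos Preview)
Your proposal is correct and matches the paper's approach exactly: the paper states just before the corollary that ``we have identified \(\complex X\) by the corresponding closed \(2\)-chain'' and then simply says the corollary follows from \thmref{thm:integralityofcurvature}, which is precisely the argument you spell out. Your write-up merely makes explicit the orientation bookkeeping (that the fundamental chain of a closed oriented surface is a \(2\)-cycle) that the paper takes for granted.
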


The discrete Poincar\'e-Hopf index theorem follows easily from the definitions:

\begin{theorem}\label{thm:discretepoincarehopf}
Let \(\spc L\to \complex X\) be a discrete hermitian line bundle with curvature \(\Omega\) over an oriented simplicial surface. If \(\psi\in\Gamma(\spc L)\) is a non-vanishing discrete section, then 
\[
\mathrm{deg}\bigl(\spc L\bigr) = \sum_{\ijk\in \complex X} \mathrm{ind}^\psi_{\ijk}.
\]
\end{theorem}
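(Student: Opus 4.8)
The plan is to observe that this theorem is essentially a global summation of the local integrality result already established. The degree is defined as $\mathrm{deg}(\spc L) = \tfrac{1}{2\pi}\int_{\complex X}\Omega$, where $\complex X$ is identified with its fundamental $2$-chain, so $\int_{\complex X}\Omega = \sum_{\ijk\in\complex X}\Omega_{\ijk}$, the sum of the curvature over all oriented faces. The index form is defined by $\mathrm{ind}^\psi = \tfrac{1}{2\pi}(d\xi^\psi + \Omega)$, and we have already proven in the preceding theorem that $\mathrm{ind}^\psi$ is $\ZZ$-valued on a nowhere-vanishing section. So summing over all faces, I would write
\[
\sum_{\ijk\in\complex X}\mathrm{ind}^\psi_{\ijk} = \frac{1}{2\pi}\sum_{\ijk\in\complex X}\bigl(d\xi^\psi_{\ijk}+\Omega_{\ijk}\bigr) = \frac{1}{2\pi}\int_{\complex X}d\xi^\psi + \mathrm{deg}(\spc L).
\]

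The key step is therefore to show that $\int_{\complex X}d\xi^\psi = 0$, i.e.\ that the total contribution of the rotation form vanishes when summed over the closed oriented surface. By the definition of the discrete exterior derivative as the adjoint of the boundary operator, $\int_{\complex X}d\xi^\psi = \langle d\xi^\psi,\complex X\rangle = \langle \xi^\psi,\partial\complex X\rangle$. Since $\complex X$ is a \emph{closed} oriented simplicial surface, the fundamental $2$-chain is a cycle, so $\partial\complex X = 0$ and hence $\langle\xi^\psi,\partial\complex X\rangle = 0$. This is precisely Stokes' theorem in the discrete setting, and it is the heart of the argument; everything else is bookkeeping.

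I expect the main (though modest) obstacle to be purely a matter of care rather than depth: one must be sure that $\complex X$ really is a closed $2$-cycle so that the boundary term drops, and that the orientations of the faces are chosen consistently so that the formal sum $\sum_{\ijk}\ijk$ genuinely represents the fundamental class with $\partial\complex X = 0$. Given that the surface is assumed oriented and closed, consistent orientations on adjacent faces induce opposite orientations on their shared edge, so the edge contributions cancel in pairs, confirming $\partial\complex X = 0$. Combining the displayed identity with the vanishing of the rotation term gives $\sum_{\ijk\in\complex X}\mathrm{ind}^\psi_{\ijk} = \mathrm{deg}(\spc L)$, which is the claim.
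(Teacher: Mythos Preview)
Your proof is correct and follows exactly the same route as the paper: both arguments reduce to the observation that $\int_{\complex X} d\xi^\psi = 0$ because the closed oriented surface has $\partial\complex X = 0$, so that summing the defining identity $\mathrm{ind}^\psi = \tfrac{1}{2\pi}(d\xi^\psi + \Omega)$ over all faces yields the degree. The paper compresses this into a single line (``the integral of an exact form over a closed oriented surface vanishes''), whereas you spell out the discrete Stokes step explicitly via the adjointness of $d$ and $\partial$.
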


\begin{proof}
Since the integral of an exact form over a closed oriented surface vanishes,
\[
2\pi\, \mathrm{deg}\bigl(\spc L\bigr) = \int_{\complex X} \Omega =  \int_{\complex X} d\xi^\psi + \Omega = 2\pi\, \sum_{\ijk\in \complex X} \mathrm{ind}^\psi_{\ijk},   
\]
as was claimed.
\end{proof}

\section{Piecewise-Smooth Vector Bundles over Simplicial Complexes}

It is well-known that each abstract simplicial complex \(\complex X\) has a geometric realization which is unique up to simplicial isomorphism. In particular, each abstract simplex is realized as an affine simplex. Moreover, each face \(\sigma^\prime\) of a simplex \(\sigma \in \complex X\) comes with an affine embedding 
\[
\iota_{\sigma^\prime\!\sigma}\colon \sigma^\prime\hookrightarrow \sigma.
\]
In the following, we will not distinguish between the abstract simplicial complex and its geometric realization.

\begin{definition}
A piecewise-smooth vector bundle \(\spc E\) over a simplicial complex \(\complex X\) is a topological vector bundle \(\pi\colon \spc E \to \complex X\) such that
\begin{compactenum}[a)]
	\item for each \(\sigma\in \complex X\) the restriction \(\spc E_\sigma:= \left.\spc E\right|_\sigma\) is a smooth vector bundle over \(\sigma\),
	\item for each face \(\sigma^\prime\) of \(\sigma\in \complex X\), the inclusion \(\spc E_{\sigma^\prime} \hookrightarrow \spc E_\sigma\) is a smooth embedding.
\end{compactenum}
\end{definition}

As a simplicial complex, \(\complex X\) has no tangent bundle. Nonetheless, differential forms survive as collections of smooth differential forms defined on the simplices which are compatible in the sense that they agree on common faces:

\begin{definition}
Let \(\spc E\) be a piecewise-smooth vector bundle over \(\complex X\). An \(\spc E\)-valued differential \(k\)-form is a collection \(\omega= \{\omega_\sigma\in\Omega^k(\sigma,\spc E_\sigma)\}_{\sigma\in\complex X}\) such that for each face \(\sigma^\prime\) of a simplex \(\sigma\in \complex X\) the following relation holds:
\[
\iota_{\sigma^\prime\!\sigma}^\ast \omega_{\sigma} = \omega_{\sigma^\prime},
\]
where \(\iota_{\sigma^\prime\!\sigma}\colon \sigma^\prime \hookrightarrow \sigma\) denotes the inclusion. The space of \(\spc E\)-valued differential \(k\)-forms is denoted by \(\Omega_{ps}^k(\complex X, \spc E)\).
\end{definition}

\begin{remark}
Note that a \(0\)-form defines a continuous map on the simplicial complex. Hence the definition includes functions and, more generally, sections: A piecewise-smooth section of \(\spc E\) is a continuous section \(\psi\colon \complex X \to \spc E\) such that for each simplex \(\sigma \in \complex X\) the restriction \(\psi_\sigma := \left. \psi\right|_\sigma\colon \sigma \to \spc E_\sigma\) is smooth, i.e.
\[
\Gamma_{ps}(\spc E):= \bigl\{\psi\colon \complex X \to \spc E \mid \psi_\sigma \in \Gamma(\spc E_\sigma) \textit{ for all }\sigma\in \complex X\bigr\}.
\]
\end{remark}

Since the pullback commutes with the wedge-product \(\wedge\) and the exterior derivative \(d\) of real-valued forms, we can define the wedge product and the exterior derivative of piecewise-smooth differential forms by applying it componentwise.
\begin{definition}
For \(\omega = \{\omega_\sigma\}_{\sigma\in\complex X}\in \Omega_{ps}^k(\complex X, \RR),\,\eta= \{\eta_\sigma\}_{\sigma\in\complex X}\in \Omega_{ps}^\ell(\complex X, \RR)\),
\[
	\omega\wedge\eta := \{\omega_\sigma\wedge\eta_\sigma\}_{\sigma\in\complex X}, \quad d\omega := \{d\omega_\sigma\}_{\sigma\in\complex X}. 
\]
\end{definition}
All the standard properties of \(\wedge\) and \(d\) also hold in the piecewise-smooth case.

\begin{definition}
A {\it connection} on a piecewise-smooth vector bundle \(\spc E\) over \(\complex X\) is a linear map \(\nabla \colon \Gamma_{ps}(\spc E) \to \Omega_{ps}^1(\complex X, \spc E)\) such that
\[
	\nabla(f\psi) = df\, \psi + f\,\nabla \psi,\quad \textit{for all } f\in \Omega_{ps}^0(\complex X, \RR),\,\psi\in\Gamma_{ps}(\spc E).
\]
\end{definition}

Once we have a connection on a smooth vector bundle we obtain a corresponding exterior derivative \(d^\nabla\) on \(\spc E\)-valued forms.

\begin{theorem}
Let \(\spc E\) be a piecewise-smooth vector bundle over \(\complex X\). Then there is a unique linear map \(d^\nabla\colon \Omega_{ps}^k(\complex X, \spc E) \to \Omega_{ps}^{k+1}(\complex X, \spc E)\) such that \(d^\nabla \psi = \nabla \psi\) for all \(\psi\in \Gamma_{ps}(\spc E)\) and
\[
d^\nabla (\omega\wedge\eta) = d\omega \wedge \eta + (-1)^k\omega\wedge d^\nabla\eta
\]
for all \(\omega\in \Omega_{ps}^k(\complex X, \RR)\) and \(\eta\in \Omega_{ps}^\ell(\complex X, \spc E)\).
\end{theorem}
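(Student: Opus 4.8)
The plan is to reduce the entire statement to the classical smooth covariant exterior derivative applied separately on each closed simplex, and then to verify that the resulting collection of forms is genuinely piecewise-smooth; uniqueness will come from a local-frame argument on each simplex.

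\textbf{Local reduction.} First I would check that $\nabla$ restricts to an honest smooth connection $\nabla_\sigma$ on each smooth bundle $\spc E_\sigma$. A connection is a local operator: from the Leibniz rule $\nabla(f\psi)=df\,\psi+f\,\nabla\psi$ together with piecewise-smooth bump functions one deduces that the value of $(\nabla\psi)_\sigma$ at a point $p\in\sigma$ depends only on the germ of $\psi_\sigma$ at $p$. Since every smooth section of $\spc E_\sigma$ extends locally to a piecewise-smooth section of $\spc E$, setting $\nabla_\sigma\psi_\sigma:=(\nabla\psi)_\sigma$ gives a well-defined smooth connection $\nabla_\sigma\colon\Gamma(\spc E_\sigma)\to\Omega^1(\sigma,\spc E_\sigma)$. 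Moreover, because $\nabla$ is a single global object and the bundle inclusions along $\iota_{\sigma'\sigma}$ are smooth embeddings, these restrictions are natural, i.e. $\nabla_{\sigma'}=\iota_{\sigma'\sigma}^\ast\nabla_\sigma$ whenever $\sigma'$ is a face of $\sigma$.

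\textbf{Construction.} Invoking the well-known smooth result, on each simplex $\sigma$ there is a unique linear map $d^{\nabla_\sigma}\colon\Omega^k(\sigma,\spc E_\sigma)\to\Omega^{k+1}(\sigma,\spc E_\sigma)$ with $d^{\nabla_\sigma}|_{\Gamma(\spc E_\sigma)}=\nabla_\sigma$ satisfying the graded Leibniz rule. I then \emph{define} $d^\nabla\eta:=\{d^{\nabla_\sigma}\eta_\sigma\}_{\sigma\in\complex X}$. Both required identities, $d^\nabla\psi=\nabla\psi$ and $d^\nabla(\omega\wedge\eta)=d\omega\wedge\eta+(-1)^k\omega\wedge d^\nabla\eta$, hold componentwise on each $\sigma$ and hence pass to the collections verbatim. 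The nontrivial point is that $d^\nabla\eta$ lies in $\Omega^{k+1}_{ps}(\complex X,\spc E)$, i.e. that $\iota_{\sigma'\sigma}^\ast\bigl(d^{\nabla_\sigma}\eta_\sigma\bigr)=d^{\nabla_{\sigma'}}\eta_{\sigma'}$ for every face $\sigma'$ of $\sigma$. This is exactly the naturality of the covariant exterior derivative under pullback: for $\iota=\iota_{\sigma'\sigma}$ one has $\iota^\ast\spc E_\sigma=\spc E_{\sigma'}$ with pullback connection $\iota^\ast\nabla_\sigma=\nabla_{\sigma'}$ by the local reduction, whence $d^{\nabla_{\sigma'}}\circ\iota^\ast=\iota^\ast\circ d^{\nabla_\sigma}$; combined with $\iota^\ast\eta_\sigma=\eta_{\sigma'}$ (valid because $\eta$ is piecewise-smooth), this gives the claimed compatibility.

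\textbf{Uniqueness.} Finally, uniqueness is checked simplexwise. Since $\sigma$ is contractible, $\spc E_\sigma$ admits a global smooth frame, so every $\spc E_\sigma$-valued $k$-form is a finite sum $\eta_\sigma=\sum_a\omega^a\wedge\psi^a$ with $\omega^a\in\Omega^k(\sigma,\RR)$ and $\psi^a\in\Gamma(\spc E_\sigma)$. The two stipulated properties then force $d^\nabla\eta_\sigma=\sum_a\bigl(d\omega^a\wedge\psi^a+(-1)^k\omega^a\wedge\nabla\psi^a\bigr)$, so any operator with the stated properties agrees with the one constructed. I expect the main obstacle to be the face-compatibility in the construction step, namely proving that the covariant exterior derivative commutes with restriction to faces; this is the only place where the simplicial gluing, rather than the purely smooth theory, genuinely intervenes, and it rests on identifying $\nabla_{\sigma'}$ with the pullback of $\nabla_\sigma$.
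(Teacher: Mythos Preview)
The paper does not supply a proof of this theorem; it is stated without argument, presumably because it is regarded as a routine transplant of the classical smooth covariant exterior derivative to the piecewise-smooth setting. Your argument is exactly the natural one and is correct in outline: define $d^\nabla$ simplexwise via the smooth theory, check face-compatibility by naturality of $d^{\nabla_\sigma}$ under pullback along $\iota_{\sigma'\sigma}$, and deduce uniqueness from a frame decomposition.

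One small point in the uniqueness step: you decompose $\eta_\sigma=\sum_a\omega^a\wedge\psi^a$ using a frame of $\spc E_\sigma$, but the Leibniz rule in the statement is only assumed for \emph{globally} piecewise-smooth $\omega$ and $\psi$, so as written you cannot immediately apply it to data living only on $\sigma$. This is easily repaired in either of two ways. First, the same bump-function locality argument you gave for $\nabla$ shows that any operator satisfying the two hypotheses is local, so its restriction to $\sigma$ is determined by data on $\sigma$ and the simplexwise decomposition becomes legitimate. Second, one may extend the chosen frame on $\sigma$ to a piecewise-smooth frame on a neighbourhood and cut off with a piecewise-smooth bump function supported there, so that the $\omega^a$ and $\psi^a$ become honest elements of $\Omega_{ps}^k(\complex X,\RR)$ and $\Gamma_{ps}(\spc E)$. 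Either fix is routine and you clearly have the relevant technique in hand.
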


The curvature tensor survives as a piecewise-smooth \(\mathrm{End}(\spc E)\)-valued \(2\)-form:

\begin{definition}
Let \(\spc E\to \complex X\) be a piecewise-smooth vector bundle. The endomorphism-valued curvature \(2\)-form of a connection \(\nabla\) on \(\spc E\) is defined as follows:
\[
	d^\nabla \circ d^\nabla \in \Omega_{ps}^2 (\complex X, \mathrm{End}(\spc E)).
\] 
\end{definition}

\section{The Associated Piecewise-Smooth Hermitian Line Bundle}
\label{sec:associated-piecewise-smooth-bundle}

Let \(\tilde{\spc L} \to \complex X\) be a piecewise-smooth hermitian line bundle with connection \(\nabla\) over a simplicial complex. Just as in the smooth case the endomorphism-valued curvature \(2\)-form takes values in the skew-adjoint endomorphisms and hence is given by a piecewise-smooth real-valued \(2\)-form \(\tilde\Omega\):
\[
d^{\nabla} \circ d^{\nabla} = -\imath \tilde\Omega.
\]
Since each simplex of \(\complex X\) has an affine structure, we can speak of piecewise-constant forms. 

The goal of this section will be to construct for each discrete hermitian line bundle with curvature a piecewise-smooth hermitian line bundle with piecewise-constant curvature which in a certain sense naturally contains the discrete bundle. We first prove two lemmata.

\begin{lemma}\label{lma:constantfromdiscreteforms}
To each closed discrete real-valued \(k\)-form \(\omega\) there corresponds a unique piecewise-constant piecewise-smooth \(k\)-form \(\tilde\omega\) such that 
\[
\omega(c)= \int_{c}\tilde\omega, \quad \textit{ for all } c\in \mathrm{C}_k(\complex X, \ZZ).
\]
The form \(\tilde\omega\) will be called the piecewise-smooth form associated to \(\omega\).
\end{lemma}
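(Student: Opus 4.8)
The plan is to build $\tilde\omega$ one simplex at a time, using that on a fixed affine $n$-simplex the space of constant $k$-forms has dimension $\binom{n}{k}$ and is pinned down by its periods over the $k$-faces through a chosen vertex. Concretely, on each simplex $\sigma=i_0\cdots i_n$ of $\complex X$ with $n\ge k$ I would fix the base vertex $i_0$, write $e_m:=i_m-i_0$ for the edge vectors (a basis of the tangent space of $\sigma$), and recall that for the dual basis $\int_{i_0 i_{m_1}\cdots i_{m_k}} e_{m_1}^\ast\wedge\cdots\wedge e_{m_k}^\ast=\tfrac{1}{k!}$, while every other $k$-face through $i_0$ integrates to $0$. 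Hence there is a \emph{unique} constant $k$-form $\tilde\omega_\sigma$ on $\sigma$ with $\int_{\rho}\tilde\omega_\sigma=\omega(\rho)$ for each $k$-face $\rho$ of $\sigma$ containing $i_0$; the resulting linear system for the coefficients of $\tilde\omega_\sigma$ is a multiple of the identity, so this defining condition determines $\tilde\omega_\sigma$ completely.

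Next I would promote this to \emph{all} $k$-faces, which is the heart of the matter and the only place where closedness of $\omega$ is used. Set $\lambda(\rho):=\int_\rho\tilde\omega_\sigma-\omega(\rho)$, a function on the $k$-faces of $\sigma$ that vanishes on those through $i_0$ by construction. For any $(k+1)$-face $\tau$ of $\sigma$ the form $\tilde\omega_\sigma$ restricts to a constant, hence closed, form, so Stokes gives $\int_{\partial\tau}\tilde\omega_\sigma=\int_\tau d\tilde\omega_\sigma=0$; combined with $\omega(\partial\tau)=0$ (which is precisely $d\omega=0$) this yields $\lambda(\partial\tau)=0$ for every such $\tau$. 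For a $k$-face $\rho$ not containing $i_0$ I would invoke the cone relation $\partial(i_0\rho)=\rho-i_0(\partial\rho)$, valid for $k\ge 1$, where $i_0\rho$ denotes the $(k+1)$-face obtained by coning $\rho$ from $i_0$: applying $\lambda$ gives $\lambda(\rho)=\lambda(i_0(\partial\rho))$, and the right-hand side is a combination of $k$-faces all containing $i_0$, on which $\lambda$ vanishes. Thus $\lambda\equiv 0$, i.e. $\int_\rho\tilde\omega_\sigma=\omega(\rho)$ for \emph{every} $k$-face $\rho$ of $\sigma$, independently of the chosen base vertex.

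With this in hand, compatibility across common faces is immediate: for a face $\sigma'$ of $\sigma$, both $\iota_{\sigma'\sigma}^\ast\tilde\omega_\sigma$ and $\tilde\omega_{\sigma'}$ are constant $k$-forms on $\sigma'$ whose integral over every $k$-face $\rho$ of $\sigma'$ equals $\omega(\rho)$ (by the previous paragraph applied to $\sigma$ and to $\sigma'$ respectively); since a constant $k$-form on a simplex is determined by its periods over the $k$-faces through one vertex, the two forms coincide. Hence $\tilde\omega:=\{\tilde\omega_\sigma\}$ is a genuine piecewise-constant piecewise-smooth $k$-form. The identity $\omega(c)=\int_c\tilde\omega$ then follows by linearity in $c$ from the case of a single oriented $k$-simplex $\sigma$, where $\int_\sigma\tilde\omega=\int_\sigma\tilde\omega_\sigma=\omega(\sigma)$. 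Uniqueness is the same determination argument run backwards: any piecewise-constant form with the stated periods must restrict on each simplex to the unique constant form fixed by those periods, namely $\tilde\omega_\sigma$.

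The only real obstacle is the step in the second paragraph: a priori $\tilde\omega_\sigma$ is defined through a distinguished vertex $i_0$, and one must verify that its periods over the faces \emph{not} through $i_0$ still reproduce $\omega$. This is forced by the closedness of $\omega$ through the interplay of Stokes' theorem for the (trivially closed) constant form with the cone identity for the simplicial boundary; without $d\omega=0$ the construction fails and is in fact base-point dependent. The degenerate case $k=0$ I would dispatch directly: a closed discrete $0$-form is constant on the connected complex $\complex X$, and the corresponding locally constant function is its associated piecewise-smooth $0$-form.
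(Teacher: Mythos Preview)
Your argument is correct and is a genuinely different proof from the paper's. The paper works on a single $n$-simplex and argues by dimension count: it considers the linear map $F\colon\Omega^k_c\to\Omega^k_d$ sending a constant $k$-form to its periods, observes that $F$ is injective with image contained in $\ker d_k$, and then shows $\dim\ker d_k=\binom{n}{k}$ by induction, invoking \lmaref{lma:exactnesscriterionforfields} (together with the acyclicity of a simplex) to get $\ker d_k=\image d_{k-1}$. Your route is instead constructive: you write down $\tilde\omega_\sigma$ explicitly by matching periods on the $k$-faces through a base vertex, and then use the cone identity $\partial(i_0\rho)=\rho-i_0(\partial\rho)$ together with Stokes and $d\omega=0$ to propagate the period condition to the remaining $k$-faces. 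The paper's proof is shorter but leans on prior machinery; yours is self-contained, makes transparent exactly where closedness enters, and yields the explicit coefficients $k!\,\omega(i_0 i_{m_1}\cdots i_{m_k})$ for $\tilde\omega_\sigma$ in the basis $e_{m_1}^\ast\wedge\cdots\wedge e_{m_k}^\ast$, which is useful in practice.
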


\begin{proof}
It is enough to consider just a single \(n\)-simplex \(\sigma\). We denote the space of piecewise-constant piecewise-smooth \(k\)-forms on \(\sigma\) by \(\Omega^k_c\) and the space of discrete \(k\)-forms on \(\sigma\) by \(\Omega^k_d\). Consider the linear map \(F\colon \Omega^k_c \to \Omega^k_d\) that assigns to \(\tilde\omega\in \Omega^k_c\) the discrete \(k\)-form given by
\[
F(\tilde\omega)_{\sigma^\prime} := \int_{\sigma^\prime}\tilde\omega .
\]
Clearly, \(F\) is injective. Moreover, since each piecewise-constant piecewise-smooth form is closed, we have \(\image \,F\subset\ker d_k\), where \(d_k\) denotes the discrete exterior derivative. Hence it is enough to show that the space of closed discrete \(k\)-forms on \(\sigma\) is of dimension \(\binom{n}{k}\). This we do by induction. Clearly, \(\dim\ker d_0 = 1 = \binom{n}{0}\). Now suppose that \(\dim\ker d_{k-1} = \binom{n}{k-1}\). By \lmaref{lma:exactnesscriterionforfields}, we have \(\ker d_k = \image d_{k-1}\). Hence
\[
\dim\ker d_k = \dim\image d_{k-1} = \dim \Omega^k_d - \dim\ker d_{k-1} = \tbinom{n+1}{k} - \tbinom{n}{k-1} = \tbinom{n}{k}. 
\]
Therefore, for each closed discrete \(k\)-form we obtain a unique piecewise-constant piecewise-smooth \(k\)-form which has the desired integrals on the \(k\)-simplices.
\end{proof}

It is a classical result that on star-shaped domains \(U\subset \RR^N\) each closed form is exact: If \(\Omega\in\Omega^k(U,\RR)\) is closed, then there exists a form \(\omega\in\Omega^{k-1}(U,\RR)\) such that \(\Omega= d\omega\). Moreover, the potential can be constructed explicitly by the map \(K\colon \Omega^k(U,\RR)\to \Omega^{k-1}(U,\RR)\) given by
\[
K(\Omega) = \sum_{i_1<\cdots < i_k} \sum_{\alpha =1}^k (-1)^{\alpha-1} \Bigl(\int_0^1 t^{k-1}\Omega_{i_1\cdots i_k}(tx)dt\Bigr)\, x_{i_\alpha}\, dx_{i_1}\wedge\ldots\wedge \widehat{dx_{i_\alpha}}\wedge\ldots\wedge dx_{i_k},
\]
where \(\Omega = \sum_{i_1<\cdots < i_k} \Omega_{i_1\cdots i_k}\, dx_{i_1}\wedge\ldots\wedge dx_{i_k}\). One directly can check that
\[
K(d\Omega) + d\, K(\Omega) = \Omega.
\]
Hence, if \(d\Omega=0\), we get \(\Omega = d\, K(\Omega)\). The same construction works for piecewise-smooth forms defined on the star of a simplex. This yields the following piecewise-smooth version of the Poincar\'e-Lemma. 

\begin{lemma}\label{lma:picewisesmoothpoincareonstars}
On the star of a simplex each closed piecewise-smooth form is exact.  
\end{lemma}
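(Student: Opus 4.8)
The plan is to contract the star onto an interior point of \(\sigma\) and to assemble the primitives produced by the explicit homotopy operator \(K\) displayed above, simplex by simplex. Write \(b\) for the barycenter of \(\sigma\); it lies in the relative interior of \(\sigma\) and therefore belongs to every simplex \(\mu\) of the closed star that has \(\sigma\) as a face. Since each such \(\mu\) is convex and contains \(b\), the radial contraction \(H_t(y)=b+t(y-b)\) maps \(\mu\) into itself for all \(t\in[0,1]\); in particular the star is star-shaped with respect to \(b\). Fixing affine coordinates centered at \(b\), the operator \(K\) is thus defined on each \(\mu\) with \(\sigma\subseteq\mu\), and for a closed form it satisfies \(d\,K(\omega_\mu)=\omega_\mu\) whenever the degree \(k\) is positive (for \(k=0\) a closed piecewise-smooth form is constant on the connected star, so we take \(k\ge 1\)).

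The one point that requires care is that the contraction does \emph{not} preserve those simplices of the closed star which do not contain \(\sigma\); so I cannot simply apply \(K\) on every simplex. Instead I would first set \(\eta_\mu:=K(\omega_\mu)\) on each \(\mu\) with \(\sigma\subseteq\mu\), and then define \(\eta_\rho\) on a simplex \(\rho\) not containing \(\sigma\) by restriction, \(\eta_\rho:=\iota_{\rho\mu}^\ast\eta_\mu\), where \(\mu\) is any simplex of the star with \(\sigma\subseteq\mu\) and \(\rho\) a face of \(\mu\) (such a \(\mu\) exists by the definition of the closed star). The heart of the argument is to show that this is independent of the choice of \(\mu\) and that the resulting collection \(\{\eta_\rho\}\) is genuinely compatible across all faces.

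Both facts reduce to the naturality of \(K\) under the affine face inclusions that fix \(b\). Concretely, if \(\sigma\subseteq\mu'\subseteq\mu\), then \(\iota_{\mu'\mu}\) commutes with the radial contractions, because it is affine and fixes \(b\in\mu'\); hence \(\iota_{\mu'\mu}^\ast K(\omega_\mu)=K(\iota_{\mu'\mu}^\ast\omega_\mu)=K(\omega_{\mu'})\), the last step using the compatibility of \(\omega\). For two simplices \(\mu_1,\mu_2\) containing \(\sigma\) and sharing the face \(\rho\), their intersection \(\mu_1\intersection\mu_2\) is a common face that still contains \(\sigma\); applying the identity just derived to \(\mu_1\intersection\mu_2\subseteq\mu_1\) and to \(\mu_1\intersection\mu_2\subseteq\mu_2\) shows that \(\iota_{\rho\mu_1}^\ast\eta_{\mu_1}\) and \(\iota_{\rho\mu_2}^\ast\eta_{\mu_2}\) both equal the restriction of \(K(\omega_{\mu_1\intersection\mu_2})\) to \(\rho\). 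This gives well-definedness, and the same naturality yields the compatibility on faces required of a piecewise-smooth form.

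Finally \(d\eta=\omega\): on each \(\mu\) containing \(\sigma\) this is the identity \(d\,K(\omega_\mu)=\omega_\mu\), and on the remaining simplices both sides are pullbacks of this identity. I expect the main obstacle to be precisely the bookkeeping for simplices of the closed star that miss \(\sigma\) --- establishing that the per-simplex primitives glue into a single piecewise-smooth form, which is exactly where the naturality of \(K\) with respect to the \(b\)-fixing inclusions is indispensable.
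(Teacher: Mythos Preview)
Your proposal is correct and follows the same route as the paper: both apply the explicit Poincar\'e homotopy operator \(K\) associated to the radial contraction toward a point of \(\sigma\), with the paper's argument being essentially the single sentence ``The same construction works for piecewise-smooth forms defined on the star of a simplex.'' You have supplied precisely the gluing step the paper leaves implicit---the naturality of \(K\) under the affine face inclusions fixing the center, and hence the well-definedness and compatibility of the primitive on simplices of the star that do not themselves contain \(\sigma\).
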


Now we are ready to prove the main result of this section.

\begin{theorem}\label{thm:existenceanduniquenessoftheassociatedbundle}
Let \(\spc L \to \complex X\) be a discrete hermitian line bundle with curvature \(\Omega\) over a simplicial complex and let \(\tilde\Omega\) be the piecewise-smooth \(2\)-form associated to \(\Omega\). Then there is a piecewise-smooth hermitian line bundle \(\tilde{\spc L}\to \complex X\) with connection \(\tilde\nabla\) of curvature \(\tilde\Omega\) such that \(\tilde{\spc L}_i = \spc L_i\) for each vertex \(i\) and the parallel transports coincide along each edge path. The bundle \(\tilde{\spc L}\) is unique up to isomorphism.
\end{theorem}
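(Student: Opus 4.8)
The plan is to build $\tilde{\spc L}$ one skeleton at a time, using on each individual simplex the fact that the piecewise-constant form $\tilde\Omega$ is closed and hence, by the piecewise-smooth Poincar\'e lemma (\lmaref{lma:picewisesmoothpoincareonstars}, applied on the star of a vertex or directly on the convex simplex), admits a smooth potential. Thus over every simplex $\sigma$ the trivial bundle $\sigma\times\CC$ carries a connection $d-\imath A_\sigma$ with $dA_\sigma=\tilde\Omega|_\sigma$ and curvature $\tilde\Omega|_\sigma$; the only real work is to glue these local models so that the fibres over vertices become the prescribed $\spc L_i$ and the edge parallel transports become the prescribed $\eta_{\ij}$. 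I would start on the $0$-skeleton by declaring the fibre over $i$ to be $\spc L_i$, and then over each edge $\ij$ take the essentially unique line bundle with connection on the interval whose parallel transport from $i$ to $j$ is $\eta_{\ij}$; since an edge is one-dimensional, its curvature automatically agrees with $\tilde\Omega$ there.

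The inductive step extends the bundle already built over $\partial\sigma$ across $\sigma$. Given the local model on $\sigma$, this amounts to producing an isomorphism, over $\partial\sigma$, between the model and the bundle-with-connection constructed on the lower skeleton; gluing along such an isomorphism keeps the boundary data --- and hence the vertex fibres and edge transports --- intact. Two hermitian line bundles with connection over $\partial\sigma$ sharing the curvature $\tilde\Omega|_{\partial\sigma}$ are isomorphic precisely when their remaining topological invariants agree, and this is exactly where the hypotheses enter. For a triangle $\ijk$ the invariant is the holonomy around $\partial(\ijk)$: the model has holonomy $\exp\bigl(\imath\int_{\ijk}\tilde\Omega\bigr)=\exp(\imath\,\Omega_{\ijk})$ by Stokes, while the boundary bundle has holonomy $\eta_{ki}\circ\eta_{jk}\circ\eta_{\ij}=e^{\imath\Omega_{\ijk}}\,\id$ by the very definition of a hermitian line bundle with curvature, so the two coincide. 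For a tetrahedron $ijkl$ the relevant invariant is the degree over the boundary $2$-sphere $\partial(ijkl)$, namely $\tfrac{1}{2\pi}\int_{\partial(ijkl)}\tilde\Omega$; by \lmaref{lma:constantfromdiscreteforms} this equals $\tfrac{1}{2\pi}\,\Omega\bigl(\partial(ijkl)\bigr)$, which vanishes because $\Omega$ is closed. Hence the degrees match (both equal $0$, the model extending over the solid tetrahedron) and the bundles are again isomorphic over the boundary. For simplices of dimension at least four the boundary sphere has $H^1(\cdot,\unitcircle)=0$ and $H^2(\cdot,\ZZ)=0$, so no obstruction arises. This carries the construction through every skeleton and yields a piecewise-smooth hermitian line bundle $\tilde{\spc L}$ with curvature $\tilde\Omega$, fibres $\spc L_i$, and edge transports $\eta_{\ij}$.

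For uniqueness, given two bundles $\tilde{\spc L},\tilde{\spc L}'$ as in the statement I would construct an isomorphism $f$ skeleton by skeleton, keeping $f=\id$ on every fibre $\spc L_i$. On edges $f$ is forced by the condition $f^\ast\tilde\nabla'=\tilde\nabla$ together with $f_i=\id$, and it automatically satisfies $f_j=\id$ because both parallel transports along $\ij$ equal $\eta_{\ij}$. Over a simplex $\sigma$ of dimension at least two the bundle $\mathrm{Hom}(\tilde{\spc L},\tilde{\spc L}')$ is flat (its curvature is $\tilde\Omega-\tilde\Omega=0$), so a connection-preserving $f$ is a covariantly constant section of the associated unit-circle bundle and is therefore determined by its value at a single vertex; existence follows by writing the two connections as $d-\imath A$ and $d-\imath A'$ and solving $A-A'=d\lambda$ on the contractible $\sigma$. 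Because $\partial\sigma$ is connected and $f=\id$ at the vertices, this unique extension over $\sigma$ restricts on $\partial\sigma$ to the $f$ already built, so the local isomorphisms patch to a global one.

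The main obstacle is the inductive gluing over the $2$- and $3$-simplices: translating the discrete data --- the monodromy $\eta_{ki}\circ\eta_{jk}\circ\eta_{\ij}$ and the closedness of $\Omega$ --- into the precise topological invariants, holonomy over boundary circles and degree over boundary $2$-spheres, that control when two line bundles with equal curvature are isomorphic. It is exactly the compatibility $\eta_{ki}\circ\eta_{jk}\circ\eta_{\ij}=e^{\imath\Omega_{\ijk}}\,\id$ built into the notion of curvature, together with the identity $\Omega\bigl(\partial(ijkl)\bigr)=0$ coming from $d\Omega=0$, that makes these invariants match and lets the construction close up; checking that the boundary isomorphisms can always be extended across the simplex is the technical heart of the argument.
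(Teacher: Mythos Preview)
Your approach is correct and genuinely different from the paper's. The paper does not build the bundle skeleton by skeleton via obstruction theory; instead it gives a one-shot explicit quotient construction. It forms $\hat{\spc L}=\bigsqcup_{i}S_i\times\spc L_i$ over the open vertex stars and divides by the relation $(i,p,u)\sim(j,p,v)$ iff $v=\exp\bigl(-\imath\int_{\Delta_{\ij}^p}\tilde\Omega\bigr)\eta_{\ij}(u)$, where $\Delta_{\ij}^p$ is the affine triangle with corners $i,j,p$. Transitivity of $\sim$ is exactly the compatibility $\eta_{ki}\circ\eta_{jk}\circ\eta_{\ij}=e^{\imath\Omega_{\ijk}}\id$, so the same discrete hypothesis you invoke to match boundary holonomies appears in the paper as the well-definedness of the equivalence relation. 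The connection is then written down in each star as $d-\imath\omega_i$, where $\omega_i$ is the unique primitive of $\tilde\Omega|_{S_i}$ vanishing in radial directions from $i$, and compatibility on overlaps is checked directly. Uniqueness is proved by choosing radially parallel local frames $\tilde\phi_i$, $\hat\phi_i$ in both bundles (which have identical transition functions) and declaring $F(\tilde\phi_i)=\hat\phi_i$.

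What each approach buys: your obstruction-theoretic argument is cleaner conceptually and makes transparent exactly which discrete conditions (the face relation for $2$-simplices, closedness of $\Omega$ for $3$-simplices, nothing thereafter) are responsible for the construction closing up; it would also generalise more readily. The paper's construction, by contrast, is completely explicit---it produces concrete local frames (radially parallel from each vertex), concrete transition functions $g_{\ij}(p)=r_{\ij}\exp\bigl(-\imath\int_{\Delta_{\ij}^p}\tilde\Omega\bigr)$, and concrete connection $1$-forms $\omega_i$---and these formulas are reused verbatim in the subsequent finite-element section to define the piecewise-linear sections and compute the metric and Dirichlet energy. So your proof establishes the theorem just as well, but the paper's version is doing double duty as infrastructure for \secref{sec:finite-elements}.
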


\begin{proof}
First we construct the piecewise-smooth hermitian line bundle. Let \(\spc L\to \complex X\) be a discrete hermitian line bundle with curvature \(\Omega\) and let \(\eta\) denote its connection. Let \(\mathcal V\) be the vertex set of \(\complex X\) and let \(S_i\) denote the open vertex star of the vertex \(i\). Further, since \(\Omega\) is closed, by \lmaref{lma:constantfromdiscreteforms}, there is a piecewise-constant piecewise-smooth form \(\tilde \Omega\) associated to \(\Omega\). Now, consider the set \[
\hat{\spc L} := \disjointunion_{i\in \mathcal V} S_i\times \spc L_i.
\]
Note, that \(S_i\intersection S_j \neq\emptyset\) if and only if \(\ij\) is an edge of \(\complex X\) or \(i=j\). Thus, if we set \(\eta_{ii}:= \left.\id\right|_{\spc L_i}\), we can define an equivalence relation on \(\hat{\spc L}\) as follows:
\[
(i,p,u) \sim (j,q,v) :\Longleftrightarrow p = q \textit{ and } v= \exp\bigl(-\imath \int_{\Delta_{\ij}^p}\tilde\Omega\bigr)\eta_{\ij}(u),
\]
where \(\Delta_{\ij}^p\) denotes the oriented triangle spanned by the point \(i,j\) and \(p\). Note here that \(\Delta_{\ij}^p\) is completely contained in some simplex of \(\complex X\). Let us check shortly that this really defines an equivalence relation. Here the only non-trivial property is transitivity. Therefore, let \((i,p,u)\sim (j,q,v)\) and \((j,q,v)\sim (k,r,w)\). Thus we have \(p=q=r\) and \(p\) lies in a simplex which contains the oriented triangle \(\ijk\). Clearly, the \(2\)-chain \(\Delta_{\ij}^p+ \Delta_{jk}^p + \Delta_{ki}^p\) is homologous to \(\ijk\) and since piecewise-constant forms are closed we get
\[
\int_{\Delta_{\ij}^p+\Delta_{jk}^p}\tilde\Omega = -\int_{\Delta_{ki}^p}\tilde\Omega + \int_{\ijk}\tilde\Omega = \int_{\Delta_{ik}^p}\tilde\Omega + \Omega_{\ijk}.  
\]
Hence we obtain
\begin{align*}
w&= \exp\bigl(-\imath \int_{\Delta_{jk}^p}\tilde\Omega\bigr)\eta_{jk}\Bigl(\exp\bigl(-\imath \int_{\Delta_{\ij}^p}\tilde\Omega\bigr)\eta_{\ij}(u)\Bigr)\\
&= \exp\bigl(-\imath \int_{\Delta_{\ij}^p + \Delta_{jk}^p }\tilde\Omega\bigr)\eta_{jk}\circ\eta_{\ij}(u) \\
&= \exp\bigl(-\imath \int_{\Delta_{ik}^p}\tilde\Omega - \imath \Omega_{\ijk}\bigr)\eta_{jk}\circ\eta_{\ij}(u)\\
&= \exp\bigl(-\imath \int_{\Delta_{ik}^p}\tilde\Omega\bigr)\eta_{ik}(u), 
\end{align*}
and thus \((i,p,u)\sim (k,r,w)\). Hence \(\sim\) defines an equivalence relation. One can check now that the quotient \(\tilde{\spc L} := \hat{\spc L}/_\sim\) is a piecewise-smooth line bundle over \(\complex X\). The local trivializations are then basically given by the inclusions \(S_i\times \spc L_i \hookrightarrow \tilde{\spc L}\) sending a point to the corresponding equivalence class. Moreover, all transition maps are unitary so that the hermitian metric of \(\spc L\) extends to \(\tilde{\spc L}\) and turns \(\tilde{\spc L}\) into a hermitian line bundle. Clearly, \(\left.\tilde{\spc L}\right|_{\mathcal V}= \spc L\).

Next, we need to construct the connection. Therefore we will use an explicit system of local sections: Choose for each vertex \(i\in \mathcal V\) a unit vector \(X_i\in \spc L_i\) and define \(\phi_i(p):= [i,p,X_i]\). This yields for each vertex \(i\) a piecewise-smooth section \(\phi_i\) define on the star \(S_i\). For each non-empty intersection \(S_i\intersection S_j\not=\emptyset\) we then obtain a function \(g_{\ij}\colon S_i\intersection S_j \to \unitcircle\). By the above construction, we find that, if \(\eta_{\ij}(X_i)= r_{\ij}X_{j}\),
\begin{equation}\label{eq:transitionmapsincoordinates}
g_{\ij}(p) = r_{\ij}\exp\bigl(-\imath \int_{\Delta_{\ij}^p}\tilde\Omega\bigr).
\end{equation}
Since \(\tilde\Omega\) is closed, \lmaref{lma:picewisesmoothpoincareonstars} tells us that \(\tilde\Omega|_{S_i}\) is exact. Hence there is a piecewise-smooth \(1\)-form \(\omega_i\) defined on \(S_i\) such that \(d\omega_i = \tilde\Omega|_{S_i}\). In general, the form \(\omega_i\) is only unique up to addition of an exact \(1\)-form, but among those there is a unique form \(\omega_i\) which is zero along the radial directions originating from \(i\). To see this, just choose some potential \(\tilde \omega_i\) of \(\Omega|_{S_i}\) and define a function \(f\colon S_i\to \RR\) as follows: 

For \(p\in S_i\), let \(f(p):= \int_{\gamma_i^{\,p}}\tilde{\omega}_i\), where \(\gamma_i^{\,p}\) denote the linear path from the vertex \(i\) to the point \(p\). Then \(\omega_i := \tilde\omega_i - df \) is a piecewise-smooth potential of \(\left.\Omega\right|_{S_i}\) and vanishes on radial directions. For the uniqueness, let \(\hat\omega_i\) be another such potential. Then, the difference \(\omega_i-\hat\omega_i\) is closed and hence exact on \(S_i\), i.e. there is \(f\colon S_i \to \RR\) such that \(df= \omega_i-\hat\omega_i\). Since \(df\) vanishes on radial directions \(f\) is constant on radial lines starting at \(i\) and hence constant on \(S_i\). Thus \(\omega_i=\hat\omega_i\).

Suppose that for each edge \(\ij\) the forms \(\omega_i\) and \(\omega_j\) are {\it compatible}, i.e., wherever both are defined,
\[
\imath\omega_j=\imath\omega_i+d\log g_{\ij}.
\]
Then we can define a connection \(\nabla\) as follows: Let \(\psi \in \Gamma\bigl(\tilde{\spc L}\bigr)\) and let \(X\in \spc{T}_p\sigma\) for some simplex \(\sigma\) of \(\complex X\), then there is some \(S_i\ni p\). On \(S_i\) we can express \(\psi\) with respect to \(\phi_i\), i.e. \(\psi = z\,\phi_i\) for some piecewise-smooth function \(z\colon S_i \to \CC\). Then we define
\begin{equation}\label{eq:localformofconnection}
\nabla_{X} \psi := \bigl(dz(X) - \imath\omega_i(X) z\bigr)\phi_i.
\end{equation}
In general there are several stars that contain the point \(p\). From compatibility easily follows that the definition does not depend on the choice of the vertex. Hence we have constructed a piecewise smooth connection \(\nabla\). One easily checks that \(\nabla\) is unitary and since \(d\omega_i= \tilde\Omega|_{S_i}\) we get \(d^\nabla\circ d^\nabla = -\imath \tilde\Omega\) as desired.

So it is left to check the compatibility of the forms \(\omega_{\ij}\) constructed above. Let \(\ij\) be some edge and let \(p_0\) be a point in its interior. Since \(\omega_i - \omega_j\) is closed, we can define \(\varphi\colon S_i\intersection S_j \to \RR\) by \(\varphi(p):= \int_{\gamma_p}\omega_i - \omega_j\), where \(\gamma_p\) is some path in \(S_i\intersection S_j\) from the point \(p_0\) to the point \(p\). Then, for \(p\in S_i \intersection S_j\),
\[
\int_{\Delta_p}\Omega = \int_{\partial \Delta_p}\omega_j = \int_{\ij+\gamma_j^{\,p}-\gamma_i^{\,p}}\omega_j= -\int_{\gamma_i^{\,p}}\omega_j =\int_{\gamma_i^{\,p}}\omega_i-\omega_j= \varphi(p),
\]
where as above \(\gamma_i^{\, p}\) denotes the linear path from \(i\) to \(p\) and, similarly, \(\gamma_j^{\, p}\) denotes the linear path from \(j\) to the point \(p\). From this we obtain
\[
\omega_i - \omega_j = d\varphi = d\int_{\Delta_p}\Omega
\]
and in particular \(\imath\omega_j=\imath\omega_i+d\log g_{\ij}\). This shows the existence. 

Now suppose there are two such piecewise-smooth bundles \(\tilde{\spc L}\) and \(\hat{\spc L}\) with connection \(\tilde\nabla\) and \(\hat\nabla\), respectively. We want to construct an isomorphism between \(\tilde{\spc L}\) and \(\hat{\spc L}\). Therefore we again use local systems. Explicitly, we choose a discrete direction field \(X \in \spc L\). This yields for each vertex \(i\) a vector \(X_i \in \tilde{\spc L}_i = \hat{\spc L}_i\) which extends by parallel transport along rays starting at \(i\) to a local sections \(\tilde\phi_i\) of \(\tilde{\spc L}\) and, similarly, to a local section \(\hat\phi_i\) of \(\hat{\spc L}\) defined on \(S_i\). 

Now we define \(F\colon \tilde{\spc L} \to \hat{\spc L}\) to be unique map which is linear on the fibers and satisfies \(F(\tilde\phi_i)=\hat\phi_i\) on \(S_i\). To see that \(F\) is well-defined, we need to check that it is compatible with the transition maps. But by construction both systems have equal transition maps, namely the the functions \(g_{\ij}\) from \eqref{eq:transitionmapsincoordinates} with \(r_{\ij}\) given by \(\eta_{\ij}(X_i) = r_{\ij}X_j\). Now, if \(z_i \, \tilde\phi_i=z_j \, \tilde\phi_j\), then \(z_i = z_j\, g_{\ij}\) and hence
\[
F(z_i\tilde\phi_i) = z_i\, \hat\phi_i = z_i\,g_{\ij}\hat\phi_j = z_j\,\hat\phi_j = F(z_j\, \tilde\phi_j).
\] 
Using \eqref{eq:localformofconnection} one similarly shows that \(F\circ\tilde\nabla = \hat\nabla\circ F\). Thus \(\tilde{\spc L} \cong \hat{\spc L}\).
\end{proof}

\section{Finite Elements for Hermitian Line Bundles With Curvature}
\label{sec:finite-elements}

In this section we want to present a specific finite element space on the associated piecewise-smooth hermitian line bundle of a discrete hermitian line with curvature. They are constructed from the local systems that played such a prominent role in the proof of \thmref{thm:existenceanduniquenessoftheassociatedbundle} and the usual piecewise-linear hat function.  

Let \(\tilde{\spc L}\) be the associated piecewise-smooth bundle of a discrete hermitian line bundle \(\spc L \to \complex X\) and let \(x_i \colon \complex X \to \RR\) denote the barycentric coordinate of the vertex \(i\in \mathcal V\), i.e. the unique piecewise-linear function such that \(x_i(j)=\delta_{\ij}\), where \(\delta\) is the Kronecker delta. Clearly, 
\[
\Gamma (\spc L) = \mathop{\bigoplus}_{i\in \mathcal V} \spc L_i.
\] 
To each \(X \in \spc L_i\) we now construct a piecewise-smooth section \(\tilde\psi\) as follows: First, we extend \(X\) to the vertex star \(S_i\) of the vertex \(i\) using the parallel transport along rays starting at \(i\). To get a global section \(\tilde\psi\in\Gamma_{ps}(\spc L)\) we use \(x_i\) to scale \(\tilde\phi\) down to zero on \(\partial S_i\) and extend it by zero to \(\complex X\), i.e. 
\[
\tilde\psi_p := \begin{cases}
\hfill x_i(p)\tilde\phi_p \hfill &  \textit{ for }p\in S_i,\\
\hfill 0 \hfill & \textit{ else.}
\end{cases}
\]
The above construction yields a linear map \(\iota \colon \Gamma(\spc L) \to \Gamma_{ps}(\tilde{\spc L})\). Clearly, \(\iota\) is injective - a left-inverse is just given by the restriction map 
\[
\Gamma_{ps}(\tilde{\spc L})\ni\tilde\psi \mapsto \bigl.\tilde\psi\bigr|_{\mathcal V} \in \Gamma(\spc L).
\]
\begin{definition}
The {\it space of piecewise-linear sections} is given by \(\Gamma_{pl}(\tilde{\spc L}) := \image \iota\).
\end{definition}

Thus we identified each section of a discrete hermitian line bundle with curvature with a piecewise-linear section of the associated piecewise-smooth bundle. This allows to define a discrete hermitian inner product and a discrete Dirichlet energy on \(\Gamma(\spc L)\), which is a generalization of the well-known cotangent Laplace operator for discrete functions on triangulated surfaces. Before we come to the Dirichlet energy, we define Euclidean simplicial complexes.

Similarly to piecewise-smooth forms we can define piecewise-smooth (contravariant) \(k\)-tensors as collections of compatible \(k\)-tensors: A {\it piecewise-smooth \(k\)-tensor} is a collection \(T=\{T_\sigma\}_{\sigma\in \complex X}\) of smooth contravariant \(k\)-tensors \(T_\sigma\) on \(\sigma\) such that
\[
\iota_{\sigma^\prime\sigma}^\ast T_{\sigma} = T_{\sigma^\prime},
\]
whenever \(\sigma^\prime\) is a face of \(\sigma\). A {\it Riemannian simplicial complex} is then a simplicial complex \(\complex X\) equipped with a {\it piecewise-smooth Riemannian metric}, i.e. a piecewise-smooth positive-definite symmetric \(2\)-tensor \(g\) on \(\complex X\).

The following lemma tells us that the space of constant piecewise-smooth symmetric tensors is isomorphic to functions on \(1\)-simplices.

\begin{lemma}
Let \(\complex X\) be a simplicial complex and let \(\mathcal E\) denote the set of its \(1\)-simplices. For each function \(f\colon \mathcal E \to \RR\) there exists a unique constant piecewise-smooth symmetric \(2\)-tensor \(S\) such that for each \(1\)-simplex \(e=\{i,j\}\)
\[
S_{e}(j-i,j-i) = f(e).
\]
\end{lemma}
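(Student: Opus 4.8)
The plan is to reduce the statement to a single simplex, where it becomes a standard polarization argument, and then to deduce the compatibility of the resulting collection from the uniqueness established on each simplex. First I would observe that a constant piecewise-smooth symmetric \(2\)-tensor is precisely a compatible collection \(S=\{S_\sigma\}_{\sigma\in\complex X}\), where each \(S_\sigma\) is a constant symmetric bilinear form on the (affine) tangent space of \(\sigma\), subject to \(\iota_{\sigma^\prime\sigma}^\ast S_\sigma=S_{\sigma^\prime}\) on faces. Fixing an \(n\)-simplex \(\sigma\) with vertices \(v_0,\ldots,v_n\), the edge vectors \(v_i-v_0\) for \(i=1,\ldots,n\) form a basis of its tangent space, and a symmetric bilinear form on an \(n\)-dimensional space has exactly \(\binom{n+1}{2}\) independent entries, the same as the number \(\binom{n+1}{2}\) of edges of \(\sigma\). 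This matching dimension count already suggests that the values \(f(e)\) on edges determine \(S_\sigma\) uniquely.

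Next I would make this explicit by polarization. Writing \(e_i:=v_i-v_0\) (with \(e_0=0\)), the condition for the edges through \(v_0\) reads \(S_\sigma(e_i,e_i)=f(\{0,i\})\), while for an edge \(\{i,j\}\) with \(1\le i<j\) we expand \(v_j-v_i=e_j-e_i\) by bilinearity:
\[
S_\sigma(e_j-e_i,e_j-e_i)=S_\sigma(e_i,e_i)-2\,S_\sigma(e_i,e_j)+S_\sigma(e_j,e_j)=f(\{i,j\}).
\]
Solving for the off-diagonal entries forces
\[
S_\sigma(e_i,e_j)=\tfrac12\bigl(f(\{0,i\})+f(\{0,j\})-f(\{i,j\})\bigr),
\]
which proves uniqueness on \(\sigma\). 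Conversely, defining \(S_\sigma\) on the basis by these formulas yields a well-defined symmetric bilinear form, and reversing the computation above confirms that \(S_\sigma(v_j-v_i,v_j-v_i)=f(\{i,j\})\) holds for \emph{every} edge, including those avoiding \(v_0\). This establishes existence and uniqueness on each individual simplex.

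Finally I would check compatibility, which is where the uniqueness does the real work. For a face \(\sigma^\prime\) of \(\sigma\), the affine inclusion \(\iota_{\sigma^\prime\sigma}\) sends each edge vector of \(\sigma^\prime\) to the corresponding edge vector of \(\sigma\), so the pullback \(\iota_{\sigma^\prime\sigma}^\ast S_\sigma\) is a constant symmetric bilinear form on \(\sigma^\prime\) satisfying \((\iota_{\sigma^\prime\sigma}^\ast S_\sigma)(v_j-v_i,v_j-v_i)=f(\{i,j\})\) for every edge \(\{i,j\}\) of \(\sigma^\prime\). By the uniqueness established on \(\sigma^\prime\), this pullback equals \(S_{\sigma^\prime}\), so the collection \(\{S_\sigma\}\) is automatically compatible and defines the desired tensor \(S\), whose global uniqueness is inherited simplexwise. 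The only step requiring genuine care is the per-simplex polarization, in particular verifying that the explicit off-diagonal formula reproduces \(f\) on edges not through the base vertex; but this is a short computation rather than a real obstacle, and compatibility across the complex then comes for free.
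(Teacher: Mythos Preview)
Your proof is correct and follows essentially the same route as the paper: reduce to a single affine simplex, use polarization to show that the quadratic values on edge vectors determine the symmetric bilinear form, and match dimensions to conclude. The paper phrases the simplex step as showing that the linear map \(S\mapsto\bigl(e\mapsto S(i_k-i_j,i_k-i_j)\bigr)\) is injective (via the polarization identity \(S(X,Y)=\tfrac12(Q(X)+Q(Y)-Q(X-Y))\)) and hence an isomorphism by the \(\tbinom{n+1}{2}\) dimension count, whereas you write out the inverse explicitly; your explicit off-diagonal formula is exactly the one the paper records immediately after the lemma. Your additional paragraph verifying compatibility across faces via uniqueness is a point the paper leaves implicit in the sentence ``It is enough to consider just a single affine \(n\)-simplex,'' so if anything your argument is more complete there.
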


\begin{proof}
It is enough to consider a single affine \(n\)-simplex \(\sigma=\{i_0,\ldots,i_n\}\) with vector space \(V\). Consider the map \(F\) that sends a symmetric \(2\)-tensor \(S\) on \(V\) to the function given by 
\[
F(S)(e) := S(i_k-i_j,i_k-i_j), \quad e=\{i_j,i_k\}\subset \sigma.
\]
Clearly, \(F\) is linear. Moreover, if \(Q\) denotes the quadratic form corresponding to \(S\), i.e. \(Q(X):=S(X,X)\), then 
\[
S(X,Y) = \tfrac{1}{2}\bigl(Q(X) + Q(Y) - Q(X-Y)\bigr).
\] 
Hence, from \(F(S)=0\) follows \(S=0\). Thus \(F\) is injective. Clearly, the space of symmetric bilinear forms is of dimension \(n(n+1)/2\), which equals the number of \(1\)-simplices. Thus \(F\) is an isomorphism. This proves the claim.
\end{proof}

It is also easy to write down the corresponding symmetric tensor in coordinates: Let \(\sigma = \{i_0,\ldots,i_n\}\) be a simplex. The vectors \(e_j:= i_j-i_0\), \(j=1,\ldots,n\), then yield a basis of the corresponding vector space. Let \(f\) be a function defined on the unoriented edges of \(\sigma\) and let \(x_{i_j}\) denote the barycentric coordinates of its vertices \(i_j\), then the corresponding symmetric bilinear form \smash{\(S^f_\sigma\)} is given by
\begin{equation}\label{eq:constantfromdiscretemetric}
S^f_\sigma = \sum_{1\leq j\leq n} f_{i_0 i_j}\, dx_{i_j}\otimes dx_{i_j} + \sum_{1\leq j,k\leq n,\, j\neq k} \tfrac{1}{2}\bigl(f_{i_0i_j} + f_{i_0i_k} - f_{i_ji_k}\bigr)\, dx_{i_j}\otimes dx_{i_k}.    
\end{equation}
Thus starting with a positive function \(f\), by Sylvester's criterion, it has to satisfy on each \(n\)-simplex \(n-1\) inequalities to determine a positive-definite form. If the corresponding piecewise-smooth form is positive-definite, we call \(f\) a discrete metric.

\begin{definition} 
A {\it Euclidean simplicial complex} is a simplicial complex \(\complex X\) equipped with a {\it discrete metric}, i.e. a map \(\ell\) that assigns to each \(1\)-simplex \(e\) a length \(\ell_e > 0\) such that for each simplex \(\sigma\) the symmetric tensor \(S_{\sigma}^\ell\) is positive-definite.
\end{definition}

Now, let \(\complex X\) be a Euclidean simplicial manifold of dimension \(n\) and denote by \(\complex X_n\) the set of its top-dimensional simplices. Since each simplex of \(\complex X\) is equipped with a scalar product it comes with a corresponding density and hence we know how to integrate functions over the simplices of \(\complex X\). Now, we define the {\it integral over \(\complex X\)} as follows:
\[
\int_{\complex X} f := \sum_{\sigma \in \complex X_n} \int_{\sigma} f_{\sigma},\quad f\in \Omega^0_{ps}(\complex X,\RR).
\]
Moreover, given a piecewise-smooth hermitian line bundle \(\tilde{\spc L}\to\complex X\) with curvature, then there is a canonical hermitian product \(\langle\!\langle.,.\rangle\!\rangle\) on \(\Gamma_{ps}(\tilde{\spc L})\): If \(\tilde\psi,\tilde\phi \in \Gamma_{ps}(\tilde{\spc L})\), then
\[
\langle\!\langle\tilde\psi,\tilde\phi\rangle\!\rangle = \int_{\complex X} \langle\tilde\psi,\tilde\phi\rangle.
\]

In particular, if \(\tilde{\spc L}\) is the associated piecewise-smooth bundle of a discrete hermitian line bundle \(\spc L\) with curvature \(\Omega\), then we can use \(\iota\) to pull \(\langle\!\langle.,.\rangle\!\rangle\) back to \(\Gamma(\spc L)\). Since \(\iota\) is injective this yields a hermitian product on \(\Gamma(\spc L)\).

Now we want to compute this metric explicitly in terms of given discrete data.

\begin{definition}
A piecewise-linear section \(\tilde\psi\in \Gamma_{pl}(\tilde{\spc L})\) is called {\it concentrated at a vertex \(i\)}, if it is of the form \(\tilde\psi= \iota(\psi_i)\) for some vector \(\psi_i\in \spc L_i\). 
\end{definition}

It is basically enough to compute the product of two such concentrated sections. Therefore, let \(\psi_i\in \spc L_i\) and \(\psi_j\in\spc L_j\) and let \(\tilde\psi^i\) and \(\tilde\psi^j\) denote the corresponding piecewise-linear concentrated sections. 

Now consider their product \(\langle \tilde\psi^i, \tilde\psi^j \rangle\). Clearly, this product has support \(S_i\intersection S_j\). For simplicity, we extend the discrete connection \(\eta\) to arbitrary pairs \(\ij\) in such way that \(\eta_{ii}= \id\) and \(\eta_{\ij}\colon \spc L_i \to \spc L_j\) is zero whenever \(\{i,j\}\not\in\complex X\). With this convention, \eqref{eq:transitionmapsincoordinates} yields
\begin{equation}\label{eq:basicproduct}
\langle \tilde\psi^j, \tilde\psi^i \rangle = \langle \psi_j, \eta_{\ij}(\psi_i)\rangle\, x_i x_j\, \exp\bigl(-\imath \int_{\Delta_{\ij}^p}\tilde\Omega\bigr),
\end{equation}
where \(\tilde\Omega\) denotes the constant piecewise-smooth curvature form associated to \(\Omega\).

Now, let us express the integral over \(\Delta_{\ij}^p\) on a given \(n\)-simplex. Therefore consider an \(n\)-simplex \(\sigma = \{i_0,\ldots,i_n\}\). The hat functions \(x_{i_1},\ldots,x_{i_n}\) yield affine coordinates on \(\sigma\) and we can express any \(2\)-form with respect to the basis forms \(dx_{i_j}\wedge dx_{i_k}\). One can show that
\[
\int_{\sigma^\prime} dx_{i_j}\wedge dx_{i_k}=\begin{cases}
\hfill \pm \frac{1}{2}\hfill & \textit{ for }\sigma^\prime = \pm i_j i_k i_\ell,\\
\hfill 0\hfill & \textit{ else.}
\end{cases}
\]
Thus we obtain
\[
\tilde\Omega = \sum_{1\leq j< k\leq n} 2\,\Omega_{i_0 i_j i_k}\, dx_{i_j}\wedge dx_{i_k}. 
\]
Now we want to compute the integral over the triangle \(\Delta_{i_0i_1}^p \subset \sigma\). By Stokes theorem,
\[
\int_{\Delta_{i_0i_1}^p} dx_{i_j}\wedge dx_{i_k} = \int_{i_0}^{i_1} x_{i_j}\, dx_{i_k} + \int_{i_1}^{p} x_{i_j}\, dx_{i_k}  + \int_{p}^{i_0} x_{i_j}\, dx_{i_k},
\]
where the integrals are computed along straight lines. A small computation shows
\[
\int_{\Delta_{i_0i_1}^p} dx_{i_j}\wedge dx_{i_k} = \frac{1}{2}\bigl(\delta_{1\!j}\, x_{i_k}(p) - \delta_{1\!k}\, x_{i_j}(p)\bigr),
\]
Thus, for \(j < k\), we get \(\int_{\Delta_{i_0i_1}^p} dx_{i_j}\wedge dx_{i_k} = \frac{1}{2}\delta_{1\!j}\, x_{i_k}(p)\) and hence 
\[
\int_{\Delta_{i_0i_1}^p} \tilde\Omega = \sum_{1\leq j< k\leq n} 2\, \Omega_{i_0 i_j i_k} \int_{\Delta_{i_0i_1}^p} dx_{i_j}\wedge dx_{i_k} = \sum_{j} \Omega_{i_0 i_1 i_j} x_{i_j}(p),
\] 
where we have used the convention that \(\Omega\) vanishes on all triples not representing an oriented \(2\)-simplex of \(\complex X\). With this convention \eqref{eq:basicproduct} becomes
\begin{equation}\label{eq:integrandproduct}
\langle \tilde\psi^j, \tilde\psi^i \rangle = \langle \psi_j, \eta_{\ij}(\psi_i)\rangle\,  x_i x_j\, \exp\bigl(-\imath \sum_{k} \Omega_{i j k} x_{k}\bigr).
\end{equation}

\begin{figure}[t]
\centering
\includegraphics[width=\columnwidth]{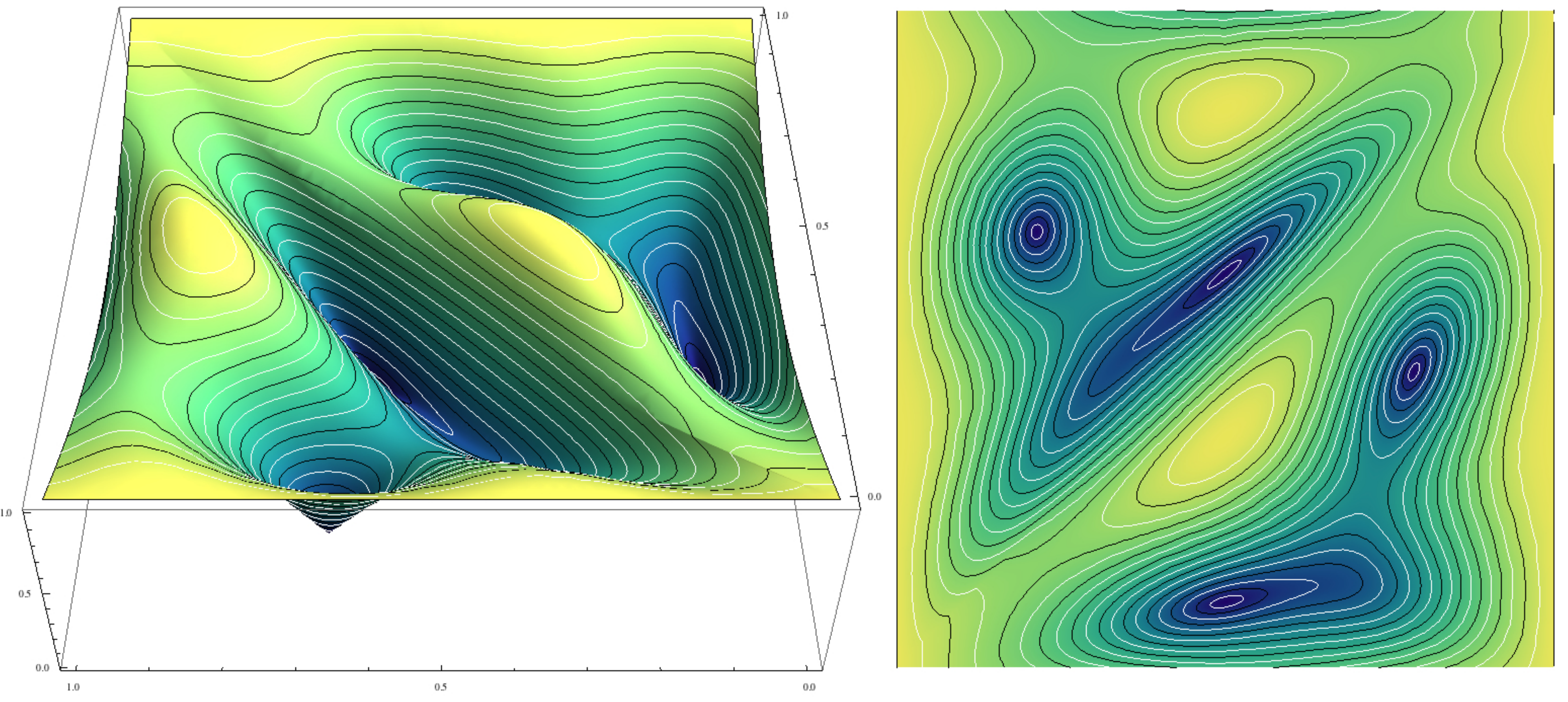}
\caption{\label{fig:continuityofsections}The graph of the norm of a piecewise-linear section of a bundle over a torus consisting of two triangles. Its two smooth parts fit continuously together along the diagonal. In this example the curvature of the bundle over each triangles is equal to \(4\pi\). Note that the section has \(4\) zeros - just as predicted.}
\end{figure}

In particular, using \eqref{eq:integrandproduct}, we can compute the norm of a piecewise-linear section \(\tilde\psi\) on a given triangle \(\ijk\). Therefore we distinguish one of its vertices, say \(i\), and write \(\tilde\psi\) with respect to a section which is radially parallel with respect to \(i\). Now, one checks that
\[
|\tilde\psi| = |c_i + x_j(c_je^{i\Omega_{\ijk}x_k}-c_i)+x_k(c_ke^{-i\Omega_{\ijk}x_j}-c_i)|,
\]
where \(c_i,c_j,c_k \in \CC\) are constants depending on the explicit form of \(\tilde\psi\). An example of the norm of a piecewise-linear section is shown in \figref{fig:continuityofsections}.

As the next proposition shows, the identification of discrete and piecewise-linear sections perfectly fits together with the definitions in \secref{sec:index}.

\begin{proposition}
Let \(\psi\in \Gamma(\spc L)\) be a discrete section and let \(\tilde\psi\in\Gamma_{pl}(\tilde{\spc L})\) be the corresponding piecewise-linear section, i.e. \(\tilde\psi = \iota(\psi)\). Then, if \(\tilde\psi\) has no zeros on edges, the discrete rotation form \(\xi^\psi\) and the piecewise-smooth rotation form \(\xi^{\tilde\psi}\) are related as follows: For each oriented edge \(\ij\),
\[\xi^\psi_{\ij} = \int_{\ij}\xi^{\tilde\psi}.\]
\end{proposition}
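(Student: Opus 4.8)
The plan is to evaluate the piecewise-smooth rotation form $\xi^{\tilde\psi} = \langle\nabla\tilde\psi,\imath\tilde\psi\rangle/\langle\tilde\psi,\tilde\psi\rangle$ explicitly along the edge $\ij$ in the radially parallel local frames from the proof of \thmref{thm:existenceanduniquenessoftheassociatedbundle}, and then to recognize its integral as the principal-value argument defining $\xi^\psi_{\ij}$. Concretely, I would fix unit vectors $X_i\in\spc L_i$ with $\eta_{\ij}(X_i)=r_{\ij}X_j$ and the radially parallel unit sections $\phi_i$ on the star $S_i$, so that $\nabla\phi_i=-\imath\omega_i\phi_i$ as in \eqref{eq:localformofconnection}, where $\omega_i$ vanishes on the radial directions emanating from $i$. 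Writing $\psi_i=z_iX_i$ and $\psi_j=z_jX_j$, the piecewise-linear section restricts on the closed edge to $\tilde\psi=x_i z_i\phi_i+x_j z_j\phi_j$, since every other barycentric coordinate vanishes there.

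The key observation is that for a point $p$ on the edge $\ij$ the triangle $\Delta_{\ij}^p$ is degenerate, so by \eqref{eq:transitionmapsincoordinates} the transition function collapses to the constant $g_{\ij}(p)=r_{\ij}$. Expressing $\tilde\psi$ in the single frame $\phi_i$ therefore gives $\tilde\psi=Z\phi_i$ with
\[
Z=x_i z_i+x_j z_j r_{\ij}^{-1},
\]
which is the straight segment in $\CC$ from $z_i$ at $i$ to $z_j r_{\ij}^{-1}$ at $j$. Feeding $\tilde\psi=Z\phi_i$ and $\nabla\phi_i=-\imath\omega_i\phi_i$ into the definition of $\xi^{\tilde\psi}$ and repeating the computation from the proof of \thmref{thm:smoothindexformula} yields $\xi^{\tilde\psi}=d\arg Z+\langle\nabla\phi_i,\imath\phi_i\rangle$ on the edge, the last term being a real multiple of $\omega_i$. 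Since the edge is a radial ray from $i$, that term integrates to zero, so $\int_{\ij}\xi^{\tilde\psi}=\int_{\ij}d\arg Z$ is exactly the total variation of the argument of $Z$ along the segment.

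It then remains to match this total variation with the principal value $\arg\bigl(z_j/(z_i r_{\ij})\bigr)\in(-\pi,\pi)$ that defines $\xi^\psi_{\ij}$; note here that $\psi_j/\eta_{\ij}(\psi_i)=z_j/(z_i r_{\ij})$, so the endpoints $z_i$ and $z_j r_{\ij}^{-1}$ of the segment have ratio exactly this number. This is where the hypothesis that $\tilde\psi$, equivalently $Z$, has no zeros on the edge is used: a straight segment avoiding the origin is a convex set not containing $0$, hence lies in an open half-plane through $0$, on which a continuous branch of $\arg$ varies by strictly less than $\pi$. Consequently $\int_{\ij}d\arg Z$ equals the principal value of the argument of the ratio of its endpoints, which is precisely $\xi^\psi_{\ij}$.

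I expect the last step to be the main obstacle: all the preceding manipulations are bookkeeping with the frames and forms supplied by \thmref{thm:existenceanduniquenessoftheassociatedbundle}, whereas identifying the a priori multivalued integral $\int_{\ij}d\arg Z$ with a single principal-value argument genuinely requires the convexity argument and the no-zeros hypothesis. A secondary point demanding care is fixing the inner-product and sign conventions (real versus hermitian pairing, orientation of $\arg$) so that the $\omega_i$-term drops out and the sign of the argument agrees with the definition of $\xi^\psi_{\ij}$.
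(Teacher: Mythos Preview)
Your proposal is correct and follows essentially the same approach as the paper, which merely says the claim ``follows easily by expressing $\tilde\psi$ with respect to some non-vanishing parallel section along the edge $\ij$''; your $\phi_i$ is exactly such a section, since the edge is a radial ray from $i$. You have supplied all of the details the paper omits, including the convexity argument that pins down the branch of the argument and actually uses the no-zeros hypothesis.
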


\begin{proof}
The claim follows easily by expressing \(\tilde\psi\) with respect to some non-vanishing parallel section along the edge \(\ij\).
\end{proof}

In particular, by \thmref{thm:smoothindexformula}, the index form of a non-vanishing section of a discrete hermitian line bundle with curvature counts the number of (signed) zeros of the corresponding piecewise-linear section of the associated piecewise-smooth bundle.

Let us continue with the computation of the metric on \(\Gamma(\spc L)\). To write down the formula we give the following definition.

\begin{definition}\label{def:canonicalmetricatoms}
Let \(\complex X\) be an \(n\)-dimensional simplicial manifold and let \(\Omega \in \Omega^2(\complex X, \RR)\). To an \(n\)-simplex \(\sigma\) and vertices \(i,j,k,l\) of \(\complex X\) we assign the value
\begin{equation*}
\Theta_{\sigma,i,j}^\Omega(k,l) := \frac{1}{\vol(\sigma)}\int_\sigma x_k x_l\, \exp\bigl(-\imath\sum_m \Omega_{\ij m} x_m\bigr),
\end{equation*}
where have chosen for integration an arbitrary discrete metric on \(\complex X\).
\end{definition}

\begin{remark}\label{rmk:independenceofchoiceofmetric}
Note that the functions \(\Theta_{\sigma,i,j}^\Omega\) are indeed well-defined. On a simplex, any two such measures induced by a discrete metric differ just by a constant.
\end{remark}

With \defref{def:canonicalmetricatoms} and \eqref{eq:integrandproduct} we obtain the following form of the metric:

\begin{theorem}[Product of Discrete Sections]\label{thm:hermitianlinebundlemetric}
Let \(\spc L\) be a discrete hermitian line bundle with curvature \(\Omega\) over an \(n\)-dimensional Euclidean simplicial manifold \(\complex X\), then the product on \(\Gamma(\spc L)\) induced by the associated piecewise-smooth hermitian line bundle is given as follows: Given two discrete sections \(\psi=\sum_i \psi_i, \phi = \sum_i \phi_i\),  
\[
\langle\!\langle \psi, \phi \rangle\!\rangle = \sum_{i,j} \mu_\Omega^{\ij}\, \langle \psi_j, \eta_{\ij}(\phi_i)\rangle, \quad \textit{where}\quad \mu_\Omega^{\ij} = \sum_{\{i,j\}\subset\sigma\in{\complex X}_n} \Theta_{\sigma,i,j}^\Omega(i,j)\, \vol(\sigma).
\]
\end{theorem}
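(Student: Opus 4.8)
The plan is to reduce the statement to the pointwise identity \eqref{eq:integrandproduct} and then integrate, so that almost all of the analytic work is already done. Since $\Gamma(\spc L)=\bigoplus_{i}\spc L_i$ and the embedding $\iota$ is linear, I would first decompose the two given sections as $\psi=\sum_j\psi_j$ and $\phi=\sum_i\phi_i$ with $\psi_j\in\spc L_j$ and $\phi_i\in\spc L_i$, abbreviate $\tilde\psi^j:=\iota(\psi_j)$ and $\tilde\phi^i:=\iota(\phi_i)$, and use the sesquilinearity of $\langle\!\langle\cdot,\cdot\rangle\!\rangle$ to get
\[
\langle\!\langle\psi,\phi\rangle\!\rangle=\sum_{i,j}\langle\!\langle\tilde\psi^j,\tilde\phi^i\rangle\!\rangle.
\]
Thus it is enough to evaluate the product of two concentrated sections.

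For a single pair I would invoke \eqref{eq:integrandproduct}, which expresses the integrand on each top-dimensional simplex $\sigma$ as
\[
\langle\tilde\psi^j,\tilde\phi^i\rangle=\langle\psi_j,\eta_{\ij}(\phi_i)\rangle\,x_ix_j\,\exp\bigl(-\imath\sum_k\Omega_{\ij k}x_k\bigr).
\]
The scalar $\langle\psi_j,\eta_{\ij}(\phi_i)\rangle$ is constant on $\sigma$ and therefore pulls out of the integral. Using $\int_{\complex X}f=\sum_{\sigma\in\complex X_n}\int_\sigma f_\sigma$ and comparing with \defref{def:canonicalmetricatoms}, the integral of the remaining factor over each simplex is exactly $\Theta^\Omega_{\sigma,i,j}(i,j)\,\vol(\sigma)$, so that
\[
\langle\!\langle\tilde\psi^j,\tilde\phi^i\rangle\!\rangle=\langle\psi_j,\eta_{\ij}(\phi_i)\rangle\sum_{\sigma\in\complex X_n}\Theta^\Omega_{\sigma,i,j}(i,j)\,\vol(\sigma).
\]

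The only point that needs attention is the range of the sum over $\sigma$. The barycentric coordinate $x_i$ vanishes identically on every simplex that does not have $i$ as a vertex, so the product $x_ix_j$ --- and hence the whole integrand --- is supported on those $\sigma\in\complex X_n$ containing both $i$ and $j$; since any two vertices of a common simplex already span a face, this is precisely the condition $\{i,j\}\subset\sigma$, and every other term drops out. Restricting the sum accordingly turns the coefficient into $\mu_\Omega^{\ij}=\sum_{\{i,j\}\subset\sigma\in\complex X_n}\Theta^\Omega_{\sigma,i,j}(i,j)\,\vol(\sigma)$, and summing over all pairs yields the asserted formula. Here the earlier conventions $\eta_{ii}=\id$, $\eta_{\ij}=0$ for $\{i,j\}\notin\complex X$, and $\Omega_{\ij k}=0$ off the oriented $2$-simplices make the full double sum well defined and correctly account for the diagonal $i=j$ terms. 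I expect no genuine obstacle: the substantive computations --- the transition functions \eqref{eq:transitionmapsincoordinates} and the evaluation of $\int_{\Delta_{\ij}^p}\tilde\Omega$ --- are already encapsulated in \eqref{eq:integrandproduct} and in $\Theta^\Omega_{\sigma,i,j}$, so this is essentially a bookkeeping assembly, the only care being the support of the hat functions.
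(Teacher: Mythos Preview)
Your proposal is correct and matches the paper's approach exactly: the paper simply states that the theorem follows from \defref{def:canonicalmetricatoms} and \eqref{eq:integrandproduct}, and you have spelled out precisely this reduction via sesquilinearity, the support of the hat functions, and integration simplex by simplex. There is nothing more to add.
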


Note that \(\Theta_{\sigma,i,j}^\Omega (k,l)\), and hence \(\mu_\Omega^{\ij}\), can be computed explicitly using Fubini's theorem and the following small lemma, which can be shown by induction. 

\begin{lemma}\label{lma:basicintegrals}
Let \(c\in \CC_\ast\), \(n\in\NN\) and \([a,b]\subset \RR\) be an interval. Then
\[
\int_a^b x^n \exp (c x) dx = \Bigl.\frac{n!}{c^{n+1}}\Bigl(\sum_{k=0}^n (-1)^k \frac{(c x)^{n-k}}{(n-k)!}\Bigr)\exp\bigl(cx\bigr)\Bigr|_a^b.
\]
\end{lemma}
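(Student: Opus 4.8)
The plan is to prove the identity by induction on $n$, exactly as the statement suggests, with integration by parts carrying out the inductive step. For the base case $n=0$ I would integrate directly: $\int_a^b \exp(cx)\,dx = \frac1c\left.\exp(cx)\right|_a^b$, which is precisely the right-hand side, since there the sum collapses to its single $k=0$ term, equal to $1$.

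For the inductive step, assuming the formula holds for $n-1$, I would first write
\[
\int_a^b x^n \exp(cx)\,dx = \left.\tfrac1c\, x^n\exp(cx)\right|_a^b - \frac nc \int_a^b x^{n-1}\exp(cx)\,dx,
\]
then substitute the induction hypothesis into the remaining integral. Using $\frac{n\,(n-1)!}{c^{n+1}} = \frac{n!}{c^{n+1}}$, everything can be placed over the common prefactor $\frac{n!}{c^{n+1}}\exp(cx)$ evaluated between $a$ and $b$. The boundary term $\frac1c x^n\exp(cx)$ becomes the $k=0$ summand $\frac{(cx)^n}{n!}$ of the target sum, and after reindexing $k\mapsto k+1$ the hypothesis sum contributes exactly the summands $k=1,\dots,n$, the signs $(-1)^k$ emerging correctly from the overall minus sign of the integration by parts. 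Collecting these gives $\frac{n!}{c^{n+1}}\bigl(\sum_{k=0}^n(-1)^k\frac{(cx)^{n-k}}{(n-k)!}\bigr)\exp(cx)$ evaluated from $a$ to $b$, as claimed.

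The computation is entirely elementary, so there is no genuine obstacle; the only point demanding care is the index bookkeeping, namely verifying that the single boundary term supplies precisely the missing $k=0$ term while the shifted hypothesis sum matches the remaining terms with the correct alternating signs. I would state this matching explicitly rather than leave it to the reader.

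As a cross-check, and as an alternative that bypasses the induction altogether, I would verify directly that the proposed antiderivative $F(x) = \frac{n!}{c^{n+1}}\bigl(\sum_{k=0}^n (-1)^k \frac{(cx)^{n-k}}{(n-k)!}\bigr)\exp(cx)$ satisfies $F'(x) = x^n\exp(cx)$. Writing $F = P\exp(cx)$ one has $F' = (cP + P')\exp(cx)$, and $cP + P'$ telescopes to $x^n$: the $k$-th term of $P'$ cancels the $(k{+}1)$-st term of $cP$, leaving only the $k=0$ term of $cP$, which is $x^n$. The fundamental theorem of calculus then yields the stated evaluation.
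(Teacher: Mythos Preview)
Your proposal is correct and follows exactly the approach the paper indicates: the paper does not actually write out a proof but merely states that the lemma ``can be shown by induction,'' which is precisely what you do, and your index bookkeeping in the inductive step is clean and accurate. Your additional direct verification via $F'(x)=x^n\exp(cx)$ is a nice bonus that goes beyond what the paper provides.
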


Next, we would like to compute the {\it Dirichlet energy} of a section \(\tilde\psi\in \Gamma_{pl}(\tilde{\spc L})\), i.e.
\[
\mathrm{E}_D(\tilde\psi) = \int_{\complex X} \bigl|\nabla \tilde\psi\bigr|^2 .
\]
Note, that the Dirichlet energy comes with a corresponding positive-semidefinite hermitian form \(\langle\!\langle.,.\rangle\!\rangle_{\!D}\) - called the {\it Dirichlet product}. Clearly, like the metric, the Dirichlet product is completely determined by the values it takes on concentrated sections. 

In general, if \(\tilde\psi\neq 0\) is piecewise-linear section concentrated at \(i\), it is given on the vertex star \(S_i\) as a product \(\tilde\psi = x_i\,\tilde\phi\), where \(x_i\) denotes the barycentric coordinate of the vertex \(i\) and \(\tilde\phi\) is a local section radially parallel with respect to \(i\). Clearly, 
\[
\nabla\tilde\psi = dx_i\, \tilde\phi + \imath\,x_i\,\omega_i\, \tilde\phi,
\]
where \(\omega_i\) denotes the rotation form of \(\tilde\phi\), i.e. \(\nabla\tilde\phi = \imath \omega_i\, \tilde\phi\). Note here that \(\omega_i\) does not depend on the actual value of \(\tilde\psi\) at \(i\), but is the same for all non-vanishing piecewise-linear sections concentrated at \(i\). 

To compute the rotation form \(\omega_i\) at a given point \(p_0\in S_i\), we use a local section \(\zeta\) which is radially parallel with respect to \(p_0\) such that \(\zeta_{p_0} = \tilde\phi_{p_0}\). Then we can express \(\tilde\phi\) in terms of \(\zeta\), i.e.
\[
\tilde\phi = z\, \zeta,
\] 
for some piecewise-smooth \(\CC_\ast\)-valued function \(z\) defined locally at \(p_0\). Clearly, \(|z|\) is constant, and hence
\[
\left.\imath\omega_i\right|_{p_0}\, \tilde\phi_{p_0} = \left.\nabla\tilde\phi\right|_{p_0} = \left.dz\right|_{p_0}\, \zeta_{p_0} + z(p_0)\, \left.\nabla \zeta\right|_{p_0} = \left.d\log z \right|_{p_0} \tilde\phi_{p_0} = \left.\imath d\arg z\right|_{p_0}\, \tilde\phi_{p_0}.
\]
The clue is that we can now use the relation of parallel transport and curvature to obtain an explicit formula for \(z\). If \(p\) is sufficiently close to \(p_0\), then the three points \(p\), \(i\) and \(p_0\) determine an oriented triangle \(\Delta^p\) which is contained in a simplex of \(\complex X\). Its boundary curve \(\gamma_p\) consists of three line segments \(\gamma_1,\gamma_2,\gamma_3\) connecting \(p\) to \(i\), \(i\) to \(p_0\) and \(p_0\) back to \(p\). Hence on each of these segments either \(\tilde\phi\) of \(\zeta\) is parallel and
\[
\zeta_p = P_{\gamma_p}(\tilde\phi_p) = \exp\bigl(\imath \int_{\Delta^p} \tilde\Omega\bigr)\tilde\phi_p.
\]
Thus we obtain that \(z(p) = \exp\bigl(-\imath \int_{\Delta^p} \tilde\Omega\bigr)\) and hence 
\[
\Bigl.\omega_i\Bigr|_{p_0} = -\Bigl.d\bigl(\int_{\Delta^p} \tilde\Omega\bigr)\Bigr|_{p_0}. 
\]
Now, if \(\Delta^p\) is contained in a simplex \(\sigma=\{i_0,\ldots,i_n\}\), one verifies that
\[
\int_{\Delta^p} dx_{i_j}\wedge dx_{i_k} = \frac{1}{2}\bigl(x_{i_j}(p_0)x_{i_k}(p) - x_{i_k}(p_0)x_{i_j}(p)\bigr).
\]
Thus,
\begin{align*}
\Bigl.d\bigl(\int_{\Delta^p} \tilde\Omega\bigr)\Bigr|_{p_0} &= \sum_{1\leq j< k\leq n} 2\,\Omega_{i_0 i_j i_k}\, \Bigl.d \bigl(\int_{\Delta^p} d x_{i_j}\wedge dx_{i_k}\bigr)\Bigr|_{p_0} \\
 &= \sum_{1\leq j< k\leq n} \Omega_{i_0 i_j i_k}\, \Bigl.\bigl(x_{i_j}dx_{i_k} - x_{i_k}dx_{i_j}\bigr)\Bigr|_{p_0},\\
 &= \sum_{1\leq j\leq n} \Bigl.\bigl(\sum_{k\neq j}\Omega_{i_0 i_j i_k}x_{i_k}\bigr)dx_{i_j}\Bigr|_{p_0}
\end{align*}
and, using the convention on \(\Omega\) from above, we find the following simple formula:
\begin{equation}\label{eq:localrotationform}
\omega_i = \Bigl.\sum_{j}\bigl(\sum_{k}\Omega_{i j k}\, x_k\bigr)\, dx_j \Bigr|_{S_i},
\end{equation}
where we sum over the whole vertex set of \(\complex X\).

Now, given this local form expressions, we can finally return to the computation of the products which we are actually interested in. Therefore we consider two piecewise-linear sections concentrated at the vertices \(i\) and \(j\): 
\[
\tilde\psi^i:=\iota(\psi_i),\quad \tilde\psi^j:=\iota(\psi_j),
\]
for some \(\psi_i\in \spc L_i\) and \(\psi_j\in \spc L_j\). On their common support \(S_i\intersection S_j\) both section can be expressed, just as above, as products of a real-valued piecewise-linear hat functions \(x_i\) and \(x_j\) and radially parallel local sections \(\tilde\phi^i\) and \(\tilde\phi^j\): 
\[\tilde\psi^i = x_i\, \tilde\phi^i, \quad \tilde\psi^j = x_j\, \tilde\phi^j.\] 
Clearly,
\begin{align*}
\langle\!\langle \tilde\psi^j, \tilde\psi^i \rangle\!\rangle_{\!D} 
= & \int_{S_i \intersection S_j} \langle dx_j \tilde\phi^j + \imath x_j\omega_j \, \tilde\phi^j, dx_i \tilde\phi^i + \imath x_i\omega_i \, \tilde\phi^i \rangle\\ 
= & \int_{S_i \intersection S_j} \langle dx_j + \imath x_j\omega_j,dx_i + \imath x_i\omega_i\rangle\,\langle\tilde\phi^j,\tilde\phi^i\rangle.
\end{align*}
With \eqref{eq:integrandproduct} we see that 
\[
\langle\tilde\phi^j,\tilde\phi^i\rangle = \langle \psi_j, \eta_{\ij}(\psi_i) \rangle\,\exp\bigl(-\imath\sum_{m} \Omega_{i j m} x_{m}\bigr).
\] 
Moreover, by \eqref{eq:localrotationform},
\begin{align*}
\langle dx_j + \imath x_j\omega_j,dx_i + \imath x_i\omega_i\rangle = & \Bigl[\langle dx_j,dx_i\rangle+ \sum_{k^\prime,k^\dprime,l^\prime,l^\dprime} \Omega_{ik^\prime l^\prime}\Omega_{jk^\dprime l^\dprime}x_j x_i x_{l^\prime}x_{l^\dprime}\langle dx_{k^\prime},dx_{k^\dprime}\rangle \Bigr]\\
& + \imath \Bigl[\sum_{k^\prime,l^\prime} (\Omega_{ik^\prime l^\prime} x_i x_{l^\prime}\langle dx_j,dx_{k^\prime}\rangle -  \Omega_{jk^\prime l^\prime} x_jx_{l^\prime}\langle dx_{k^\prime},dx_i\rangle)\Bigr].  
\end{align*}
The constants \(\langle dx_{k^\prime}, dx_{l^\prime} \rangle\) are basically provided by the following lemma. 
\begin{lemma}\label{lma:canonicalgradients}
Let \(\sigma= \{v_0,\ldots,v_n\}\) be a Euclidean simplex of dimension \(n>0\) and let \(x_i\) denote its barycentric coordinate functions. Then
\[
\grad x_i = -\frac{1}{h_i} N_i,
\]
where \(h_i\) denotes the distance between \(v_i\) and \(\sigma_i = \sigma\setminus \{v_i\}\) and \(N_i\) denotes the outward-pointing unit normal of \(\sigma_i\).
\end{lemma}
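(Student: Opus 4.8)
The plan is to exploit the defining property of the barycentric coordinate: by construction $x_i$ is the unique \emph{affine} function on $\sigma$ with $x_i(v_j) = \delta_{\ij}$. Consequently $\grad x_i$ is a single constant vector on $\sigma$ (the hypothesis $n>0$ guarantees that $\sigma_i$ is a genuine codimension-one face with a well-defined normal), and the whole statement reduces to identifying this constant vector's direction and its length separately.

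First I would pin down the direction. Since $x_i$ vanishes at every vertex of the opposite face $\sigma_i$ and $x_i$ is affine, it vanishes on the entire affine hull of $\sigma_i$, which is a hyperplane. An affine function that is constant on a hyperplane has gradient orthogonal to that hyperplane, so $\grad x_i$ is a scalar multiple of the unit normal $N_i$ of $\sigma_i$. To fix the sign, note that $x_i$ increases from $0$ on $\sigma_i$ to the value $1$ at $v_i$, while $v_i$ lies on the interior side of $\sigma_i$, i.e.\ on the side \emph{opposite} to the outward-pointing normal $N_i$. Hence $\grad x_i = -c\,N_i$ for some $c>0$.

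Next I would compute the magnitude $c$. Let $q$ be the orthogonal foot of $v_i$ on the affine hull of $\sigma_i$, so that $|v_i - q| = h_i$ and, because $v_i$ lies on the side opposite $N_i$, the displacement is $v_i - q = -h_i N_i$. Using that $x_i$ is affine, the difference of its values at two points equals the pairing of its gradient with the displacement between them:
\[
1 = x_i(v_i) - x_i(q) = \langle \grad x_i, v_i - q\rangle = \langle -c\,N_i, -h_i N_i\rangle = c\,h_i.
\]
Thus $c = 1/h_i$ and $\grad x_i = -\tfrac{1}{h_i} N_i$, as claimed.

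I do not expect a genuine obstacle here; the only point requiring care is the bookkeeping of signs -- verifying that $v_i$ sits on the interior side of $\sigma_i$, so that $v_i - q$ points along $-N_i$ rather than $+N_i$ -- together with the standard observation that for an affine function the directional derivative along a segment equals the difference of its endpoint values. It is precisely this observation that converts the perpendicular distance $h_i$ directly into the reciprocal gradient length.
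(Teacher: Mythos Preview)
Your proof is correct and follows essentially the same approach as the paper's: both use that the affine function $x_i$ vanishes on $\sigma_i$ (so $\grad x_i$ is normal to that face) and then extract the scale factor from the relation $1 = x_i(v_i) - x_i(\text{point on }\sigma_i) = \langle \grad x_i,\, v_i - (\text{point on }\sigma_i)\rangle$, which equals $h_i$ times the coefficient in front of $N_i$. The only cosmetic difference is that the paper uses a vertex $v_0\in\sigma_i$ as the base point (invoking $dx_i(v_j-v_0)=\delta_{\ij}$ and $h_i=\langle v_0-v_i,N_i\rangle$) while you use the orthogonal foot $q$; both give the same computation.
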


\begin{proof}
This immediately follows from two basic facts: First, \(dx_i (v_j-v_0) = \delta_{\ij}\) for \(i,j> 0\). Second, \(h_i = \langle v_0-v_i, N_i\rangle\).
\end{proof}

\lmaref{lma:canonicalgradients} yields almost immediately a higher dimensional analogue of the well-known cotangent formula for surfaces.

\begin{theorem}[Cotangent Formula]
Let \(\sigma\) be a simplex of a Euclidean simplicial complex \(\complex X\) and let \(\dim \sigma >1\). If \(i\neq j\),
\[
c_\sigma^{\ij}:= \int_{\sigma} \langle dx_i , dx_j \rangle = \begin{cases}\hfill-\frac{1}{n(n-1)} \cot \alpha^{\ij}_\sigma\, \vol \bigl(\sigma\setminus \{i,j\}\bigr),\hfill &\textit{if } \{i,j\} \subset \sigma, \\ \hfill 0\hfill & else.\end{cases}
\]
Here \(\alpha_\sigma^{\ij}\) denotes the angle between the faces \(\sigma\setminus \{i\}\) and \(\sigma\setminus\{j\}\). Moreover,
\[
c_\sigma^{ii}:= \int_{\sigma} |dx_i|^2 = \begin{cases}\hfill\frac{1}{n\, h_i}\, \vol \bigl(\sigma\setminus \{i\}\bigr),\hfill &\textit{if } i \in \sigma, \\ \hfill 0\hfill & else,\end{cases}
\]
where \(h_i\) denotes the distance between the vertex \(i\) and the face \(\sigma\setminus \{i\}\).
\end{theorem}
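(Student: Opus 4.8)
The plan is to exploit that each barycentric coordinate $x_i$ is an affine function on the simplex $\sigma$, so that $dx_i$ (equivalently $\grad x_i$) is constant on $\sigma$. Consequently the integrand $\langle dx_i, dx_j\rangle$ is constant and
\[
c_\sigma^{\ij} = \int_\sigma \langle dx_i, dx_j\rangle = \langle \grad x_i, \grad x_j\rangle\,\vol(\sigma).
\]
The ``else'' cases are then immediate: if $i\notin\sigma$ (or $j\notin\sigma$) then $x_i$ vanishes identically on $\sigma$, hence $\grad x_i = 0$ and the integral is zero. For the remaining cases I would feed in \lmaref{lma:canonicalgradients}, which gives $\grad x_i = -\tfrac{1}{h_i}N_i$, so that
\[
\langle \grad x_i, \grad x_j\rangle = \frac{\langle N_i, N_j\rangle}{h_i h_j}.
\]

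First the diagonal case $i=j$: here $N_i$ is a unit vector, so $\langle\grad x_i,\grad x_i\rangle = 1/h_i^2$, and combining with the pyramid volume formula $\vol(\sigma) = \tfrac{1}{n}h_i\,\vol(\sigma\setminus\{i\})$ (an $n$-simplex is the cone of height $h_i$ over its facet $\sigma\setminus\{i\}$) yields $c_\sigma^{ii} = \tfrac{1}{nh_i}\vol(\sigma\setminus\{i\})$ at once. For the off-diagonal case with $\{i,j\}\subset\sigma$, I would first identify $\langle N_i, N_j\rangle$: the outward unit normals of the two facets $\sigma\setminus\{i\}$ and $\sigma\setminus\{j\}$ enclose the angle $\pi - \alpha_\sigma^{\ij}$, where $\alpha_\sigma^{\ij}$ is the interior dihedral angle between these facets, so $\langle N_i, N_j\rangle = -\cos\alpha_\sigma^{\ij}$. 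Thus everything reduces to establishing the purely metric identity
\[
n(n-1)\,\vol(\sigma)\,\sin\alpha_\sigma^{\ij} = h_i\, h_j\,\vol(\sigma\setminus\{i,j\}),
\]
after which substitution gives $c_\sigma^{\ij} = -\tfrac{1}{n(n-1)}\cot\alpha_\sigma^{\ij}\,\vol(\sigma\setminus\{i,j\})$.

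To prove this identity -- the heart of the argument -- I would set $\tau := \sigma\setminus\{i,j\}$, the common ridge of the two facets, and project $\sigma$ orthogonally onto the $2$-plane transverse to the affine hull of $\tau$. The entire ridge collapses to a single point $O$, while $i$ and $j$ map to points $i'$, $j'$, and the dihedral angle $\alpha_\sigma^{\ij}$ appears as the planar angle $\angle i'Oj'$. Writing $d_i$, $d_j$ for the distances of $i$, $j$ to the affine hull of $\tau$, planar trigonometry in this $2$-plane gives $h_i = d_i\sin\alpha_\sigma^{\ij}$ and $h_j = d_j\sin\alpha_\sigma^{\ij}$, since each facet-hyperplane is cylindrical over $\tau$ and hence its normal lies in the transverse plane, so distances are computed there. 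Iterating the pyramid formula -- viewing $\sigma$ as the cone of height $h_j$ over $\sigma\setminus\{j\}$, and $\sigma\setminus\{j\}$ in turn as the cone of height $d_i$ over $\tau$ -- gives $\vol(\sigma) = \tfrac{1}{n(n-1)}h_j\,d_i\,\vol(\tau)$, and substituting $h_j = d_j\sin\alpha_\sigma^{\ij}$ together with $d_i d_j = h_i h_j/\sin^2\alpha_\sigma^{\ij}$ produces exactly the claimed identity.

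The main obstacle is this last geometric reduction: one must argue carefully that the orthogonal projection onto the plane transverse to $\tau$ simultaneously realizes the dihedral angle as a genuine planar angle and preserves the relevant distances (so that $h_i = d_i\sin\alpha_\sigma^{\ij}$), and that the heights entering the iterated pyramid formula are precisely the ridge distances $d_i$ rather than the facet heights. Getting the identification of $\alpha_\sigma^{\ij}$ with the angle between facets correct, together with its complementary relation to the angle between the outward normals, is where all the care is needed; the algebraic assembly and the degenerate case $\cos\alpha_\sigma^{\ij}=0$ (where both sides vanish) are then routine.
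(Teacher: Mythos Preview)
Your proposal is correct and follows essentially the same route as the paper: reduce to $\langle\grad x_i,\grad x_j\rangle\,\vol(\sigma)$ via \lmaref{lma:canonicalgradients}, identify $\langle N_i,N_j\rangle=-\cos\alpha_\sigma^{\ij}$, and invoke the volume identity relating $\vol(\sigma)$, $h_i h_j$, $\sin\alpha_\sigma^{\ij}$, and $\vol(\sigma\setminus\{i,j\})$. The paper simply asserts this identity, whereas you supply a full derivation via the transverse projection and iterated cone formula; note also that your version $n(n-1)\,\vol(\sigma)\,\sin\alpha_\sigma^{\ij}=h_ih_j\,\vol(\sigma\setminus\{i,j\})$ is the correct one---the paper's printed formula has the $\sin\alpha_\sigma^{\ij}$ on the wrong side.
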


\begin{proof}
Clearly, if \(\{i,j\}\not\subset \sigma\), then \(\int_{\sigma} \langle dx_i , dx_j \rangle = 0\). Now, let \(\{i,j\}\subset \sigma\), \(i\neq j\). With the notation of \lmaref{lma:canonicalgradients}, we have
\[
\int_{\sigma} \langle dx_i , dx_j \rangle = \langle\grad x_i ,\grad x_j\rangle\, \vol \sigma = \frac{\langle N_i, N_j \rangle}{h_i h_j}\,\vol \sigma.
\]
Furthermore, \(\cos \alpha_\sigma^{\ij}= -\langle N_i, N_j\rangle\) and \(n!\, \vol \sigma = (n-2)!\, h_ih_j \sin\alpha_\sigma^{\ij}\, \vol\bigl(\sigma\setminus \{i,j\}\bigr)\). This yields the first part of the theorem. Similarly, \(n\,\vol\sigma = h_i\,\vol\bigl(\sigma\setminus \{i\}\bigr)\). Setting then \(i=j\) yields the second part.
\end{proof}

\begin{definition}\label{def:canonicaldirichletatoms}
Let \(\complex X\) be an \(n\)-dimensional simplicial manifold and let \(\Omega \in \Omega^2(\complex X, \RR)\). Let \(\sigma\) be an \(n\)-simplex and \(i,j,k,l\) be vertices of \(\complex X\). Then, let
\begin{align*}
\Lambda_{\sigma,i,j}^\Omega & := \frac{1}{\vol(\sigma)}\int_\sigma \exp\bigl(-\imath\sum_m \Omega_{\ij m} x_m\bigr), \\
\Xi_{\sigma,i,j}^\Omega(k,l) & := \frac{1}{\vol(\sigma)}\int_\sigma x_i x_j x_k x_l\, \exp\bigl(-\imath\sum_m \Omega_{\ij m} x_m\bigr),
\end{align*}
where we choose for the integration an arbitrary discrete metric on \(\complex X\).
\end{definition}

\begin{remark}
Just like the functions \(\Theta_{\sigma,i,j}^\Omega\), the values \(\Lambda_{\sigma,i,j}^\Omega\) and the functions \(\Xi_{\sigma,i,j}^\Omega\) and are well-defined (compare \rmkref{rmk:independenceofchoiceofmetric}).
\end{remark}

Now, with these definitions, we can summarize the above discussion by the following theorem.

\begin{theorem}[Discrete Dirichlet Energy]
Let \(\spc L\) be a discrete hermitian line bundle with curvature \(\Omega\) over an \(n\)-dimensional Euclidean simplicial manifold \(\complex X\), then the {\it Dirichlet product} on \(\Gamma(\spc L)\) induced by the associated piecewise-smooth hermitian line bundle is given as follows: If \(\phi=\sum_i \phi_i\) and \(\psi = \sum_i \psi_i\) are two discrete sections,   
\[
\langle\!\langle \phi, \psi \rangle\!\rangle_{\!D} = \sum_{i,j} w_\Omega^{\ij}\, \langle \phi_j, \eta_{\ij}(\psi_i)\rangle,\quad w_\Omega^{\ij} = \sum_{\{i,j\}\subset \sigma \in {\complex X}_n} W_{\sigma,i,j}^\Omega,
\]
where
\begin{align*}
W_{\sigma,i,j}^\Omega = & \Bigl[ c^{\ij}_\sigma\, \Lambda_{\sigma,i,j}^\Omega + \sum_{k^\prime,k^\dprime,l^\prime,l^\dprime} \Omega_{ik^\prime l^\prime}\Omega_{jk^\dprime l^\dprime}\,c_\sigma^{k^\prime k^\dprime}\,\Xi_{\sigma,i,j}^\Omega (l^\prime,l^\dprime)\Bigr] \numberthis\label{eq:dirichletweights}\\
& + \imath\Bigl[ \sum_{k^\prime,l^\prime} \bigl(\Omega_{ik^\prime l^\prime}\,c_\sigma^{j k^\prime}\,\Theta_{\sigma,i,j}^\Omega (i,l^\prime) - \Omega_{jk^\prime l^\prime}\,c_\sigma^{i k^\prime}\,\Theta_{\sigma,i,j}^\Omega (j,l^\prime) \bigr)\Bigr].
\end{align*}
\end{theorem}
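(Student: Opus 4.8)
The plan is to reduce the statement to the case of concentrated sections and then assemble the pointwise computations carried out in the running discussion above. Since $\Gamma(\spc L)=\bigoplus_i\spc L_i$ and $\langle\!\langle\cdot,\cdot\rangle\!\rangle_{\!D}$ is biadditive, linearity of $\iota$ gives $\langle\!\langle\phi,\psi\rangle\!\rangle_{\!D}=\sum_{i,j}\langle\!\langle\iota(\phi_j),\iota(\psi_i)\rangle\!\rangle_{\!D}$, so the theorem is equivalent to the single identity
\[
\langle\!\langle\iota(\phi_j),\iota(\psi_i)\rangle\!\rangle_{\!D}=w_\Omega^{\ij}\,\langle\phi_j,\eta_{\ij}(\psi_i)\rangle
\]
for each ordered pair $(i,j)$. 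Thus it suffices to treat the two concentrated sections $\tilde\psi^j=\iota(\phi_j)$ and $\tilde\psi^i=\iota(\psi_i)$.

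First I would invoke the integral expression for their Dirichlet product established above,
\[
\langle\!\langle\tilde\psi^j,\tilde\psi^i\rangle\!\rangle_{\!D}=\int_{S_i\intersection S_j}\langle dx_j+\imath x_j\omega_j,\,dx_i+\imath x_i\omega_i\rangle\,\langle\tilde\phi^j,\tilde\phi^i\rangle,
\]
and substitute into it the expansion of the gradient factor displayed just before the Cotangent Formula together with the product formula \eqref{eq:integrandproduct} for $\langle\tilde\phi^j,\tilde\phi^i\rangle$. This turns the integrand into the scalar $\langle\phi_j,\eta_{\ij}(\psi_i)\rangle$ times the common exponential $\exp(-\imath\sum_m\Omega_{\ij m}x_m)$ times a sum whose terms are monomials in the barycentric coordinates weighted by the constants $\langle dx_{k'},dx_{k''}\rangle$: a real group containing $\langle dx_j,dx_i\rangle$ and the double-$\Omega$ term, and an imaginary group carrying a single factor of $\Omega$.

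Next I would break $S_i\intersection S_j$ into the top-dimensional simplices $\sigma$ containing the edge $\{i,j\}$, so that $\langle\!\langle\tilde\psi^j,\tilde\psi^i\rangle\!\rangle_{\!D}=\sum_{\{i,j\}\subset\sigma\in\complex X_n}\int_\sigma(\cdots)$. On a fixed $\sigma$ every inner product $\langle dx_{k'},dx_{k''}\rangle$ is constant and equals $c_\sigma^{k'k''}/\vol(\sigma)$ by the Cotangent Formula, so it pulls out of the integral; the surviving integrals of coordinate monomials against the exponential are, by \defref{def:canonicalmetricatoms} and \defref{def:canonicaldirichletatoms}, exactly $\vol(\sigma)\,\Lambda_{\sigma,i,j}^\Omega$, $\vol(\sigma)\,\Xi_{\sigma,i,j}^\Omega(l',l'')$ and $\vol(\sigma)\,\Theta_{\sigma,i,j}^\Omega(\,\cdot\,,l')$. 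The factors $\vol(\sigma)$ cancel, and matching the resulting sums term by term reproduces precisely $W_{\sigma,i,j}^\Omega$ as in \eqref{eq:dirichletweights}; summing over $\sigma$ gives $w_\Omega^{\ij}$ and proves the identity.

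Since the individual integrals are elementary (and reducible via \lmaref{lma:basicintegrals}), the real work is bookkeeping rather than analysis. The main obstacle is to carry the conjugation convention of the Hermitian product cleanly through the $\imath x\omega$ terms so that the imaginary group acquires exactly the signs appearing in \eqref{eq:dirichletweights}, while keeping the symmetry $c_\sigma^{ik'}=c_\sigma^{k'i}$ and the cancellation of the $\vol(\sigma)$ normalizations straight. A secondary point is to confirm that the local formulas \eqref{eq:localrotationform} and \eqref{eq:integrandproduct}, each derived on one simplex relative to a distinguished vertex, are applied consistently on every simplex of the common support, under the standing convention that $\Omega$ vanishes on triples that are not oriented faces of $\complex X$.
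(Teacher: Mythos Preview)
Your proposal is correct and follows essentially the same route as the paper: the theorem is stated there as a summary of the preceding discussion, and your steps---reduction to concentrated sections by bilinearity, the integral expression for $\langle\!\langle\tilde\psi^j,\tilde\psi^i\rangle\!\rangle_{\!D}$, the expansion of the gradient factor via \eqref{eq:localrotationform}, the product formula \eqref{eq:integrandproduct}, splitting into top simplices, and identifying the resulting integrals with $\Lambda_{\sigma,i,j}^\Omega$, $\Theta_{\sigma,i,j}^\Omega$, $\Xi_{\sigma,i,j}^\Omega$ through the Cotangent Formula---are exactly the computations the paper carries out before stating the theorem. One small wording point: your phrase ``simplices containing the edge $\{i,j\}$'' should be read as ``simplices containing $\{i,j\}$'' so that the diagonal case $i=j$ is covered as well, consistent with the condition $\{i,j\}\subset\sigma$ in the definition of $w_\Omega^{ij}$.
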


\section{Discrete Energies on Surfaces - An Example}

While the computation of the Dirichlet product \(\langle\!\langle.,.\rangle\!\rangle_{\!D}\) and the metric \(\langle\!\langle.,.\rangle\!\rangle\) of discrete sections is quite complicated and tedious for higher dimensional simplicial manifolds, it is manageable for the \(2\)-dimensional case. We are going to compute it explicitly.

Throughout this section let \(\spc L\) denote a discrete hermitian line bundle with curvature \(\Omega\) over a Euclidean simplicial surface \(\complex X\) and let \(\sigma=\{i,j,k\}\) be one of its triangles.

The metric \(\langle\!\langle.,.\rangle\!\rangle\) is easily obtained. We basically just need to compute the values \(\Theta_{\sigma,i,i}^\Omega (i,i)\) and \(\Theta_{\sigma,i,j}^\Omega (i,j)\), which can be done over the standard triangle. We get
\begin{equation}\label{eq:metricintegrals}
\Theta_{\sigma,i,i}^\Omega (i,i) = \frac{1}{6},\quad \Theta_{\sigma,i,j}^\Omega (i,j) = 2\frac{\exp(-\imath \Omega_{\ijk}) - 1 + \imath \Omega_{\ijk} + \tfrac{1}{2}\Omega_{\ijk}^2 - \imath \tfrac{1}{6}\Omega_{\ijk}^3}{\Omega_{\ijk}^4}.
\end{equation}

Now, we compute the Dirichlet product \(\langle\!\langle.,.\rangle\!\rangle_{\!D}\) on \(\complex X\). For \(n=2\), the expressions \(W_{\sigma,i,i}^\Omega\) and  \(W_{\sigma,i,j}^\Omega\) simplify drastically. First, we look at the diagonal terms. We have
\begin{align*}
\sum_{k^\prime,k^\dprime,l^\prime,l^\dprime} & c_\sigma^{k^\prime k^\dprime} \Omega_{ik^\prime l^\prime}\Omega_{ik^\dprime l^\dprime}\,\Xi_{\sigma,i,i}^\Omega(l^\prime,l^\dprime) \hfill\\
 & = \Bigl(c_\sigma^{jj}\Xi_{\sigma,i,i}^\Omega(k,k) - 2c^{jk}_\sigma \Xi_{\sigma,i,i}^\Omega(j,k) + c_\sigma^{kk}\Xi_{\sigma,i,i}^\Omega(j,j) \Bigr)\Omega_{\ijk}^2 ,
\end{align*}
and with
\[
\Lambda_{\sigma,i,i} = 1, \quad  \Xi_{\sigma,i,i}(j,j) = \frac{1}{90} =  \Xi_{\sigma,i,i}(k,k), \quad \Xi_{\sigma,i,i}(j,k) = \frac{1}{180}  
\]
we get the following formula:
\begin{equation*}
W_{\sigma,i,i}^\Omega = c_\sigma^{ii} + \frac{c_\sigma^{jj} - c^{jk}_\sigma + c_\sigma^{kk}}{90}\Omega_{\ijk}^2.
\end{equation*}
Now we would like to obtain a similar formula for the off-diagonal terms. Since \(dx_i+dx_j= -dx_k\), we have \(c_\sigma^{jk} + c_\sigma^{ki}= -c_\sigma^{kk}\). Hence,
\begin{align*}
\sum_{k^\prime,k^\dprime,l^\prime,l^\dprime}& c_\sigma^{k^\prime k^\dprime} \Omega_{ik^\prime l^\prime}\Omega_{jk^\dprime l^\dprime}\,\Xi_{\sigma,i,j}^\Omega(l^\prime,l^\dprime)\\
& = -\Bigl(c_\sigma^{\ij}\Xi_{\sigma,i,j}^\Omega(k,k) + c_\sigma^{kk}\bigl(\Xi_{\sigma,i,j}^\Omega(i,j) + \Xi_{\sigma,i,j}^\Omega(j,k)\bigr)\Bigr)\Omega_{\ijk}^2.
\end{align*}
This time the expressions become more complicated. We get
\begin{equation*}
\resizebox{.95\hsize}{!}{$\Xi_{\sigma,i,j}^\Omega(k,k) = \frac{2}{\Omega_{\ijk}^6}\Bigl( 20 - 12\imath \Omega_{\ijk} - 3\Omega_{\ijk}^2 + \tfrac{1}{3}\imath\Omega_{\ijk}^3 + \bigl(-20 - 8\imath \Omega_{\ijk}+ \Omega_{\ijk}^2\bigr)\exp\bigl(-\imath\Omega_{\ijk}\bigr)\Bigr),$} 
\end{equation*}
\begin{equation*} 
\resizebox{.99\hsize}{!}{$\Xi_{\sigma,i,j}^\Omega(i,j) + \Xi_{\sigma,i,j}^\Omega(j,k) = \frac{2}{\Omega_{\ijk}^6}\Bigl(-6 + 4\imath \Omega_{\ijk} + \Omega_{\ijk}^2 + \tfrac{1}{12}\Omega_{\ijk}^4 - \tfrac{1}{30}\imath\Omega_{\ijk}^5 + \bigl(6 + 2\imath \Omega_{\ijk}\bigr)\exp\bigl(-\imath\Omega_{\ijk}\bigr)\Bigr)$}.
\end{equation*}
Thus,
\begin{align*}
\sum_{k^\prime,k^\dprime,l^\prime,l^\dprime} & c_\sigma^{k^\prime k^\dprime} \Omega_{ik^\prime l^\prime}\Omega_{jk^\dprime l^\dprime}\,\Xi_{\sigma,i,j}^\Omega(l^\prime,l^\dprime) = \\ 
\frac{2}{\Omega_{\ijk}^4} \Bigl( &\bigl[6 c_\sigma^{kk} - 20 c_\sigma^{\ij}\bigr] + \bigl[12 c_\sigma^{\ij} - 4 c_\sigma^{kk}\bigr]\imath\Omega_{\ijk} + \bigl[3 c_\sigma^{\ij} - c_\sigma^{kk}\bigr]\Omega_{\ijk}^2 - \tfrac{c_\sigma^{\ij}}{3}\imath\Omega_{\ijk}^3 - \tfrac{c_\sigma^{kk}}{12}\Omega_{\ijk}^4 \\
& + \tfrac{c_\sigma^{kk}}{30}\imath\Omega_{\ijk}^5 + \bigl(\bigr[20 c_\sigma^{\ij} - 6 c_\sigma^{kk}\bigl] + \bigr[8 c_\sigma^{\ij} - 2 c_\sigma^{kk}\bigl]\imath\Omega_{\ijk} - c_\sigma^{\ij}\Omega_{\ijk}^2\bigr)\exp\bigl(-\imath\Omega_{\ijk}\bigr).
\Bigr)
\end{align*}
Now, let us look at the second sum in \eqref{eq:dirichletweights}. We have
\begin{align*}
\imath\sum_{k^\prime,l^\prime}& \bigl(\Omega_{ik^\prime l^\prime}c_\sigma^{j k^\prime} \Theta_{\sigma,i,j}^\Omega(i,l^\prime) -  \Omega_{jk^\prime l^\prime}c_\sigma^{i k^\prime}\Theta_{\sigma,i,j}^\Omega(j,l^\prime)\bigr)\\
& = \Bigl(c_\sigma^{ii}\Theta_{\sigma,i,j}^\Omega(j,k) + c_\sigma^{jj}\Theta_{\sigma,i,j}^\Omega(k,i) + c_\sigma^{kk}\Theta_{\sigma,i,j}^\Omega(i,j)\Bigr)\imath\Omega_{\ijk}.
\end{align*}
The formula for \(\Theta_{\sigma,i,j}^\Omega(i,j)\) is already given in \eqref{eq:metricintegrals}. Further, we have
\begin{equation*}
\Theta_{\sigma,i,j}^\Omega(j,k) = \frac{2}{\Omega_{\ijk}^4}\Bigl(3 - 2\imath\Omega_{\ijk} - \tfrac{1}{2}\Omega_{\ijk}^2 + \bigl(-3 +\imath\Omega_{\ijk}\bigr)\exp\bigl(-\imath\Omega_{\ijk}\bigr)\Bigr) = \Theta_{\sigma,i,j}^\Omega(k,i).  
\end{equation*}
Thus we get
\begin{align*}
\imath\sum_{k^\prime,l^\prime} \bigl(\Omega_{ik^\prime l^\prime}c_\sigma^{j k^\prime} \Theta_{\sigma,i,j}^\Omega(i,l^\prime) & -  \Omega_{jk^\prime l^\prime}c_\sigma^{i k^\prime}\Theta_{\sigma,i,j}^\Omega(j,l^\prime)\bigr) = \\
\frac{2}{\Omega_{\ijk}^4} \Bigl( \bigl[ 3(c_\sigma^{ii} + c_\sigma^{jj}) - c_\sigma^{kk}\bigr] & \imath\Omega_{\ijk} + \bigl[2(c_\sigma^{ii} + c_\sigma^{jj}) - c_\sigma^{kk} \bigr]\Omega_{\ijk}^2 + \tfrac{1}{2}\bigl[c_\sigma^{kk}- c_\sigma^{ii} - c_\sigma^{jj}\bigr]\imath\Omega_{\ijk}^3\\
+ \tfrac{c_\sigma^{kk}}{6} \Omega_{\ijk}^4 + \bigl( & \bigl[c_\sigma^{kk} - 3(c_\sigma^{ii} + c_\sigma^{jj})\bigr]\imath\Omega_{\ijk} + \bigr[c_\sigma^{ii} + c_\sigma^{jj}\bigl]\Omega_{\ijk}^2\bigr)\exp\bigl(-\imath\Omega_{\ijk}\bigr) 
\Bigr).
\end{align*}
Hence, with 
\[
\Lambda_{
\sigma,i,j}^\Omega = \frac{2}{\Omega_{\ijk}^4}\Bigl(\Omega_{\ijk}^2 - \imath\Omega_{\ijk}^3 - \Omega_{\ijk}^2\exp\bigl(-\imath\Omega_{\ijk}\bigr)\Bigr),
\]
\eqref{eq:dirichletweights} becomes
\begingroup\makeatletter\def\f@size{8}\check@mathfonts
\def\maketag@@@#1{\hbox{\m@th\large\normalfont#1}}
\begin{align*}
W_{\sigma,i,j}^\Omega = \frac{2}{\Omega_{\ijk}^4} \Bigl(& \bigl[ 6\sigma^{kk}-20 c_\sigma^{\ij}\bigr] + \bigl[12c_\sigma^{\ij}+3(c_\sigma^{ii}+c_\sigma^{jj})-5c_\sigma^{kk}\bigr]\imath\Omega_{\ijk} + \bigl[4c_\sigma^{\ij} + 2(c_\sigma^{ii} + c_\sigma^{jj}- c_\sigma^{kk}) \bigr]\Omega_{\ijk}^2\\
+ \tfrac{1}{6}&\bigl[3(c_\sigma^{kk}- c_\sigma^{ii} - c_\sigma^{jj}) - 8 c_\sigma^{\ij}\bigr]\imath\Omega_{\ijk}^3 + \tfrac{1}{12}c_\sigma^{kk} \Omega_{\ijk}^4 + \tfrac{1}{30}c_\sigma^{kk} \Omega_{\ijk}^4\\ + \bigl(&\bigl[20 c_\sigma^{\ij} - 6 c_\sigma^{kk}\bigr] + \bigr[8 c_\sigma^{\ij} - 3(c_\sigma^{ii}+c_\sigma^{jj}) - c_\sigma^{kk}\bigl]\imath\Omega_{\ijk} + \bigl[c_\sigma^{ii}-2c_\sigma^{\ij}+c_\sigma^{jj}]\bigr]\Omega_{\ijk}^2\bigr)\exp\bigl(-\imath\Omega_{\ijk}\bigr) 
\Bigr).
\end{align*}
\endgroup
Since \(n=2\), the weights \(c_\sigma^{\ij}\) are just given as follows:
\[
c_\sigma^{\ij} = -\frac{\cot \alpha_\sigma^{\ij}}{2},\quad c_\sigma^{kk}= \frac{\ell_{\ij}}{2h_k},
\]
where \(\ell_{\ij}\) denotes the edge length. We would like to express them explicitly in terms of the Euclidean metric \(g\) of \(\sigma\). In fact, we can distinguish the vertex \(k\) as origin and use the hat functions \(x_i\) and \(x_j\) as coordinates on \(\sigma\). With respect to these coordinates, the metric is given by a matrix:
\[
g= \begin{pmatrix}g_{11} & g_{12}\\ g_{21} & g_{22} \end{pmatrix}.
\]
In terms of \(g\) the cotangent weights are given as follows:
\begin{align*}
c_\sigma^{\ij} &= -\frac{g_{12}}{2\sqrt{\det g}}, & c_\sigma^{jk} &= -\frac{g_{11} - g_{12}}{2\sqrt{\det g}}, & c_\sigma^{ki} &= -\frac{g_{22} - g_{12}}{2\sqrt{\det g}}, \\
c_\sigma^{kk} &= \frac{g_{11}-2g_{12}+g_{22}}{2\sqrt{\det g}}, & c_\sigma^{ii} &= \frac{g_{22}}{2\sqrt{\det g}}, & c_\sigma^{jj} &= \frac{g_{11}}{2\sqrt{\det g}},
\end{align*}
and we have rederived the formulas in \cite{KC13}:
\begin{align*}
W_{\sigma,i,j}^\Omega = & \frac{1}{\vol(\sigma)\Omega_{\ijk}^4} \Bigl( \bigl[3g_{11} + 4g_{12} + 3g_{22}\bigr] - \bigl[g_{11}+g_{12}+g_{22}\bigr]\imath\Omega_{\ijk} + \tfrac{g_{12}}{6}\imath\Omega_{\ijk}^3 \\ 
& + \tfrac{g_{11}-2g_{12}+g_{22}}{24} \Omega_{\ijk}^4 + \tfrac{g_{11}-2g_{12}+g_{22}}{60}\Omega_{\ijk}^4 - \bigl(\bigl[3g_{11} + 4g_{12} + 3g_{22}\bigr] \\ 
& + \bigr[2g_{11}+3g_{12}+2g_{22}\bigl]\imath\Omega_{\ijk} - \tfrac{1}{2}\bigl[g_{11}+2g_{12}+g_{22}\bigr]\Omega_{\ijk}^2\bigr)\exp\bigl(-\imath\Omega_{\ijk}\bigr)\Bigr).
\end{align*}

\bibliographystyle{abbrv}
\bibliography{dhlb}{}
\newpage

\end{document}